\newcommand{\N}{\mathbb{N}}
\newcommand{\C}{\mathbb{C}}
\renewcommand{\S}{\mathbb{S}}
\renewcommand{\P}{\mathbb{P}}
\newcommand{\R}{\mathbb{R}}
\newcommand{\norm}[1]{\left\lVert#1\right\rVert}
\newcommand{\tr}[0]{\mathrm{tr}}
\newcommand{\Tr}[0]{\mathrm{Tr}}
\newcommand{\FP}{\textup{F.P.}}
\newcommand{\Vol}[0]{\mathrm{Vol}}
\newcommand{\End}{\mathrm{End}}
\newcommand{\spec}[0]{\mathrm{spec}}
\newcommand{\Br}[0]{\right) }
\newcommand{\Bl}[0]{\left( }
\renewcommand{\Re}[0]{\mathrm{Re}}
\newcommand{\bcup}[0]{\overline{\cup}}
\DeclareMathOperator*{\LIM}{LIM}
\newcommand{\cU}{\mathscr{U}}
\newcommand{\diag}{\mathrm{diag}}
\newcommand{\dVol}{\mathrm{dVol}}
\tikzset{->-/.style={decoration={markings, mark=at position #1 with {\arrow{>}}},postaction={decorate}}}
\tikzset{cross/.style={cross out, draw=black, minimum size=2*(#1-\pgflinewidth), inner sep=0pt, outer sep=0pt},
cross/.default={1pt}}
\theoremstyle{plain}
\newtheorem{thm}{Theorem}[section]
\newtheorem{Lem}[thm]{Lemma}
\newtheorem{Cor}[thm]{Corollary}
\newtheorem{Prop}[thm]{Proposition}
\newtheorem{Def}[thm]{Definition}
\newtheorem{Rem}[thm]{Remark}
\newtheorem{Assump}[thm]{Assumption}
\def\Xint#1{\mathchoice
{\XXint\displaystyle\textstyle{#1}}%
{\XXint\textstyle\scriptstyle{#1}}%
{\XXint\scriptstyle\scriptscriptstyle{#1}}%
{\XXint\scriptscriptstyle\scriptscriptstyle{#1}}%
\!\int}
\def\XXint#1#2#3{{\setbox0=\hbox{$#1{#2#3}{\int}$ }
\vcenter{\hbox{$#2#3$ }}\kern-.6\wd0}}
\def\dashint{\Xint-}
\numberwithin{equation}{section}
\begin{document}

\title[Analytic torsion for fibred boundary metrics]
{Analytic torsion for fibred boundary metrics \\
and conic degeneration}

\author{J\o rgen Olsen Lye}
\address{Leibniz Universit\"at Hannover,
30167 Hannover,
Germany}
\email{joergen.lye@math.uni-hannover.de}

\author{Boris Vertman}
\address{Universit\"at Oldenburg,
26129 Oldenburg,
Germany}
\email{boris.vertman@uni-oldenburg.de}

\subjclass[2020]{58J52; 35K08 ;  53C21}
\date{\today}

\begin{abstract}
{We study the renormalized analytic torsion of complete manifolds with fibred boundary metrics,
also referred to as $\phi$-metrics. We establish invariance of the torsion under suitable deformations 
of the metric, and establish a gluing formula. As an application, we 
relate the analytic torsions for complete $\phi$- and incomplete wedge-metrics. 
As a simple consequence we recover a result by Sher and Guillarmou about analytic torsion under conic degeneration.}
\end{abstract}

\maketitle
\tableofcontents

\section{Introduction: basics on analytic torsion}\label{intro-section}

\subsection{Ray-Singer analytic torsion}

Analytic torsion is an important topological invariant, which is defined using spectral data of a compact Riemannian manifold. 
We start by recalling its definition. Let $(M,g)$ be a closed oriented Riemannian manifold of dimension $m$, equipped with a flat Hermitian
vector bundle $(E,\nabla,h)$. Consider the corresponding Hodge Laplacian $\Delta_k$
acting on $E$--valued differential forms $\Omega^k(M,E)$ of degree
$k$. Its unique self-adjoint extension is a discrete operator with spectrum $\sigma(\Delta_k)$
accumulating at $\infty$ according to Weyl's law. As a consequence, we can define its zeta-function as the following convergent sum
\begin{equation}
     \zeta(s,\Delta_k) :=  \sum_{\lambda\in \sigma(\Delta_k) \setminus \{0\}} m(\lambda) \, \lambda^{-s}, \ \Re( s) > \frac{m}{2},
     \label{eq:zeta}
\end{equation}
where $m(\lambda)$ is the geometric multiplicity of the eigenvalue $\lambda\in \sigma(\Delta_k)$.
The zeta-function is linked to the \emph{heat trace}
$\Tr \Bl e^{-t\Delta_k}\Br $ via a Mellin transform
\begin{equation}
\zeta(s,\Delta_k) = \frac{1}{\Gamma(s)} \int_0^{\infty}
    t^{s-1} \Bl\Tr\Bl e^{-t\Delta_k}\Br - \dim \ker \Delta_k \Br \, dt.
    \label{eq:HeatTorsion}
\end{equation}
The short time asymptotic expansion of the
heat trace yields a meromorphic extension of $\zeta(s,\Delta_k)$, which is a priori a holomorphic function 
for $\Re( s) > \frac{m}{2}$,
to the whole complex plane $\C$ with at most simple poles and $s=0$ being a regular point.
This allows us to define the \emph{scalar analytic torsion} of the flat bundle $E$ by
\begin{equation}
T(M,E;g):= \exp\left(\, \frac{1}{2}\sum_{k=0}^{\dim M}(-1)^k\cdot k \cdot 
\zeta'(0,\Delta_k)\right).
\label{eq:AnalyticTorsion}
\end{equation}

\subsection{Analytic torsion as a norm}

Below, it will become convenient to reinterpret the analytic torsion as a norm on the determinant line of the 
cohomology $H^*(M,E)$ with values in the flat vector bundle $E$. This is defined as 
\[\det H^*(M,E):= \bigotimes_{k=0}^m \left( \bigwedge^{b_k} H^k(M,E)\right)^{(-1)^k},\]
where $b_k=\dim H^k(M,E)$. The determinant line bundle has an induced $L^2$ structure $\norm{\cdot }_{L^2}$ 
from the inclusion $H^k(M,E) \cong \ker \Delta_{k} \subset L^2\Omega^k(M,E)$, where the $L^2$-inner product
is induced by the Riemannian metric $g$ and the Hermitian metric $h$. 
The \emph{analytic torsion norm}, also called the Ray-Singer metric or the Quillen metric, is then defined as
\begin{equation}
\norm{\cdot}_{(M,E,g)}^{RS}:= T(M,E;g)\norm{\cdot }_{L^2}.
\label{eq:AnalyticTorsionNorm}
\end{equation}
By an argument of Ray and Singer \cite{RaySin:RTA}, this is actually independent of $g$. 
In case of non-compact or singular manifolds, $H^*(M,E)$ is replaced by the 
reduced $L^2$-cohomology $H^*_{(2)}(M,E)$, which is still isomorphic to the 
space of harmonic forms. Moreover, the Quillen metric, if well-defined, 
may no longer be independent of $g$.

\subsection{Analytic torsion norm on manifolds with (regular) boundary}\label{rel-abs-sec}

Similar construction applies to manifolds $(M,g)$ with regular boundary $\partial M$.
Imposing relative and absolute boundary conditions as in \cite{Hilbert}, one defines
the respective analytic torsion norms 
\begin{equation}
\norm{\cdot}_{(M,E,\partial M, g)}^{RS}, \quad \norm{\cdot}_{(M,E,g)}^{RS}. 
\end{equation}

\subsection{Cheeger-M\"uller theorem}

The analytic torsion $T(M,E;g)$ was introduced by Ray and Singer \cite{RaySin:RTA}
as an analytic analogue of the Reidemeister
torsion $\tau(M,E;\omega)$. The latter is defined in terms of a simplicial or cellular decomposition of $M$, 
where $\omega$ is a choice of a non-zero element in $\det H^*(M,E)$. 
This invariant was introduced by Reidemeister \cite{Rei:UVK}
and Franz \cite{Fra:UDT} and is a homeomorphism-invariant, which distinguishes spaces which are homotopy-equivalent but not homeomorphic.
The central result of this field is due to Cheeger \cite{Che:ATA} and M\"uller \cite{Mue:ATA}.

\begin{thm}[{ \cite{Che:ATA}, \cite{Mue:ATA}}]\label{CM-thm}
Consider a smooth closed Riemannian manifold $(M,g)$ and a flat Hermitian vector bundle $(E,\nabla, h)$. 
Then, when $\norm{\omega }_{L^2} = 1$, the analytic torsion $T(M,E;g)$ equals the combinatorial
Reidemeister torsion $\tau(M,E;\omega)$.  
\end{thm}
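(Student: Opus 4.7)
The plan is to show that the ratio $\rho(M,E) := T(M,E;g)/\tau(M,E;\omega)$, normalised by $\|\omega\|_{L^2}=1$, is a topological invariant of $(M,E)$ and equals one. Well-definedness follows from the metric-independence of $T$ (the Ray-Singer deformation argument alluded to in the excerpt) together with the triangulation-independence of $\tau$ (Franz). It thus suffices to verify $\rho \equiv 1$.

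A first reduction handles the even-dimensional case: if $m$ is even and $E$ is unitary, the Hodge star intertwines $\Delta_k$ and $\Delta_{m-k}$, so the alternating sum in \eqref{eq:AnalyticTorsion} telescopes and forces $T(M,E;g)=1$; the parallel combinatorial duality forces $\tau=1$. The substantive case is therefore odd-dimensional $m$.

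Next, I would establish gluing formulas for both torsions along a closed separating hypersurface $\Sigma \subset M$, writing $M = M_1 \cup_\Sigma M_2$ with absolute and relative boundary conditions on the two pieces as in Section~\ref{rel-abs-sec}. On the combinatorial side, the gluing of $\tau$ amounts to a Mayer-Vietoris determinant identity and is essentially elementary linear algebra. On the analytic side, one must compare $\log T(M)$ with $\log T(M_1,\mathrm{rel}) + \log T(M_2,\mathrm{abs})$ and identify the resulting anomaly, an integral of a heat-kernel expansion localised in a collar of $\Sigma$, with the combinatorial interface term. Given these two compatible gluing formulas, a surgery/bordism argument reduces the verification of $\rho \equiv 1$ to a finite family of odd-dimensional model spaces, such as lens spaces $L(p;q)$ with flat line bundles, whose representations generate enough of the relevant character variety. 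On each such model, $\zeta'(0,\Delta_k)$ admits a closed form in terms of Hurwitz and Barnes zeta values, which one matches against the classical Franz formula for $\tau$.

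The main obstacle is the analytic gluing/anomaly step: controlling the Hodge heat kernel on a collar of $\Sigma$ precisely enough to identify its boundary contribution with the combinatorial one. Cheeger's original proof navigates this via delicate surgery estimates and a variational principle for $\zeta'(0)$; Müller's alternative bypasses gluing entirely by introducing a combinatorial Laplacian on Whitney forms and deforming continuously to the Hodge Laplacian in a small-mesh limit, the difference being controlled through short-time heat-trace asymptotics. Either route requires heat-kernel analysis well beyond the machinery set up in the introduction, and is where essentially all of the analytic effort is concentrated.
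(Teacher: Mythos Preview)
The paper does not prove this theorem; it is stated with attribution to Cheeger \cite{Che:ATA} and M\"uller \cite{Mue:ATA} as a foundational result from the literature that motivates the rest of the article. There is therefore no ``paper's own proof'' to compare against.

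That said, your sketch is a reasonable high-level outline of the two classical approaches. The even-dimensional reduction via Hodge duality is correct, and you have accurately identified the two main strategies: Cheeger's surgery/gluing route and M\"uller's combinatorial-Laplacian route via Whitney forms and mesh refinement. A few caveats: the reduction ``to a finite family of odd-dimensional model spaces, such as lens spaces'' is not quite how Cheeger's argument runs; his surgery argument shows that $\rho$ is invariant under certain elementary surgeries and then reduces to spheres (where both torsions are trivial), rather than to lens spaces with nontrivial bundles. The lens-space computations you allude to were historically the \emph{evidence} for the Ray--Singer conjecture, not the endpoint of the proof. Also, the analytic gluing formula with identified boundary anomaly in the generality you describe (L\"uck, Vishik, Br\"uning--Ma) came well after the original Cheeger--M\"uller theorem; Cheeger's own argument uses more hands-on comparison estimates for heat traces under surgery rather than a clean gluing identity. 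Your final paragraph is accurate: essentially all the work is in the heat-kernel analysis, and neither route is accessible from the introductory material alone.
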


For smooth, compact manifolds, several other proofs of the Cheeger-M\"uller theorem have been 
obtained. In particular, there is a proof using microlocal surgery methods by Hassel
\cite{Hass:ASA} as well as one using Witten deformation by Bismut-Zhang
\cite{BisZha:AEO}. M\"uller \cite{Mue-unimodular} generalized the
result to unimodular representations of the fundamental
group, while \cite{BisZha:AEO} are even able to dispense with
the unimodularity assumption. The theorem was extended to manifolds with boundary by L\"uck
\cite{Lue:AAT}, Vishik \cite{Vis:GRS} and Br\"uning-Ma
\cite{BruMa:AAF}. A highly readable recent survey of analytic torsion for compact manifolds is \cite{LottSurvey}. \medskip

\subsection{Cheeger-M\"uller type theorems in singular settings}

Extending Theorem \ref{CM-thm} to non-compact manifolds and to spaces with singularities 
is still partially open, and constitutes a central part of Cheeger's \cite{Che:SGS}
celebrated program of ``extending the theory of the Laplace operator to Riemannian spaces 
with singularities''. The first question is, of course, how to define the two torsions. 
\medskip

For the Reidemeister torsion, Dar \cite{Dar:IRT} has introduced intersection R-torsion using 
the intersection homology groups by Goresky and MacPherson 
\cite{IntersectionHom}. On the analytic side one first has to address if 
$\zeta(s,\Delta_k)$ is well-defined, which is non-trivial since on singular or non-compact 
spaces, the heat operator may not be trace class and heat trace asymptotics need not hold. 
\medskip

These issues were addressed on spaces with fibered cusps by Albin, Rochon and Sher 
in \cite{ARS, ARS2} as well as the second author \cite{Ver}.
For spaces with wedge singularities, analytic torsion has been studied by Mazzeo and 
the second named author \cite{MazVer} and a Cheeger-M\"uller type result is due to Albin, Rochon 
and Sher \cite{ARS3} over an odd dimensional edge $B$, see also \cite{HarVer}.
In the case of asymptotically conical spaces, renormalized analytic torsion has been 
studied by Guillarmou and Sher \cite{GuiShe}. \medskip

We would also like to mention the work of Ludwig on analytic torsion on manifold with conical singularities, \cite{Ludwig20}, \cite{Ludwig22}. 
She uses Witten deformation rather than microlocal analysis and builds on Bismut and Zhang, \cite{BisZha:AEO}. 

\subsection{Gluing formula and reduction to the model case}
   
 Lesch \cite{Les:GFA}, building on the work of Vishik \cite{Vis:GRS}, has proven a gluing 
 formula for the analytic torsion in case of discrete spectrum. The intersection 
 R-torsion also satisfies a gluing property, see \cite{Les:GFA}. This allows one to 
establish a Cheeger-M\"{u}ller type theorem on a singular space by comparing 
the analytic and combinatorial torsions in a singular neighborhood only.

\section{Statement of the main results}

\subsection{Manifolds with fibred boundary and $\phi$-metrics}\label{subsec-phi}

We shall always assume that $M$ is connected. 
To deal with non-compact manifolds $M$, we consider a compactification $\overline{M}$
and view $M$ as its open interior. Assume that the boundary $\partial \overline{M} = \partial M$
is the total space of a fibration 
\[\phi\colon \partial M\to B\]
for some closed manifold $B$ and typical fibre being a closed manifold $F$. 
A $\phi$-metric $g_\phi$ on $M$ is by definition of the following form in a collar neighbourhood $\cU=(0,1)\times \partial M$ 
of the boundary $\partial M$
\begin{equation}
g_\phi = \frac{dx^2}{x^4}+\frac{\phi^*g_{B}}{x^2}+g_{F} + \mathfrak{h} = : g_0+\mathfrak{h},
\label{eq:Phi}
\end{equation}
where $x \in (0,1)$ is the radial function on the collar $\cU$, $g_B$ is a 
Riemannian metric on $B$ and $g_F$ is a symmetric bilinear form on 
$\partial M$, which restricts to a Riemannian metric on the fibres $F$. 
 The higher order term satisfies $\vert \mathfrak{h}\vert_{g_0} =\mathcal{O}(x)$ as $x\to 0$. The triple $(\overline{M},\phi,g_{\phi})$ will be called a \textit{$\phi$-manifold}. We will often suppress part of the structure and refer to $(M,g_{\phi})$ as a $\phi$-manifold.
We also consider a flat Hermitian vector bundle $E$ over $\overline{M}$. \medskip

Consider local coordinates $(x,y,z) \in \cU$, where $y=(y_i)$ is the lift of local coordinates
on the base $B$, and $z=(z_j)$ restrict to local coordinates on the fibres $F$. 
We introduce the $\phi$-tangent bundle over $\overline{M}$ by specifying its smooth sections  
(to be smooth in the interior and) locally near $\partial M$ as
\begin{equation*}
C^\infty(\overline{M}, {}^\phi TM) = C^\infty(\overline{M})\text{-span}\left\langle x^{2}\frac{\partial}{\partial x}, 
x\frac{\partial}{\partial y_{i}}, \frac{\partial}{\partial z_{j}}\right\rangle.
\end{equation*}
Note that the metric $g_\phi$ extends to a smooth positive definite quadratic form on 
${}^\phi TM$ over all of $\overline{M}$.
The sections of the dual bundle ${}^\phi T^*M$, the so-called $\phi$-cotangent bundle, 
are smooth in the interior and locally near $\partial M$ are given by
\begin{equation*}
C^\infty(\overline{M}, {}^\phi T^*M) = 
C^\infty(\overline{M})\text{-span}\left\langle \frac{dx}{x^{2}}, \frac{dy_{i}}{x},dz_{j}\right\rangle.
\end{equation*}

We are building on the works of Grieser, Talebi and the second named author 
\cite{MohammadBorisDaniel, MohammadBoris, Mohammad}.
There, some restrictions are imposed on the geometry, which we now gather together in a single assumption.

\begin{Assump}\label{assumption}
\begin{enumerate}
\item The higher order term satisfies $\vert \mathfrak{h}\vert_{g_0} =\mathcal{O}(x^3)$ as $x\to 0$.
\item The fibration $\phi\colon (\partial M,g_F+\phi^* g_B)\to (B,g_B)$ is a Riemannian submersion.
\item The base manifold $B$ of the fibration $\phi$ has dimension $b=\dim(B)\geq 2$.
\item The twisted Gauss-Bonnet operator $D_B$ on the base $B$ with values in the bundle of fibre-harmonic forms $\mathscr{H}^*(F,E)$ over $B$ 
satisfies some spectral conditions and commutes with the projection onto the fibre-harmonic forms. We refer to 
\cite[Assumption 1.4, 1.5]{MohammadBorisDaniel} for precise statements. 
\end{enumerate}
\end{Assump} 

The simplest example of a $\phi$-manifold is $M = F \times \R^n$ with $g_\phi = g_F + g_{\R^n}$,
where $(F,g_F)$ is a closed Riemannian manifold and $g_{\R^n}$ is the Euclidean metric on $\R^n$. Denote by 
$Y$ the spherical compactification of $\R^n$ at infinity, so that $\overline{M} = F \times Y$ and $\partial M=F \times \mathbb{S}^{n-1}$.
The fibration $\phi: F \times \mathbb{S}^{n-1} \to \mathbb{S}^{n-1}$ is the projection onto the second factor.
If $r>0$ denotes the Euclidean distance to the origin in $\R^n$, then $x=\frac{1}{r}$ is a boundary defining function 
at infinity. The metric induced from the Euclidean metric is precisely 
\[g_\phi=\frac{dx^2}{x^4}+\frac{\phi^*g_{\mathbb{S}^{n-1}}}{x^2} + g_F.\]  
 
This structure might seem contrived, but arises naturally in several interesting examples. 
We refer to \cite[Section 7]{Cohomology} for a more extensive treatment and mention just a few examples here. 
Known $4$-dimensional examples of non-compact, complete hyperk\"{a}hler manifolds
with Riemann tensor in $L^2$ (so-called gravitational instantons), are all $\phi$-manifolds, some of them with non-trivial fibrations. The most well-known class are the ALE spaces, 
like the Eguchi-Hanson space \cite{EH} with trivial fibre and $\partial M= \R\P^3$. The second class of spaces are the ALF-spaces 
(asymptotically locally flat) where $F=\S^1$, like the Gibbons-Hawking spaces \cite{GH} or Taub-NUT. The final family of 
$\phi$-manifolds we would like to mention are the ALG manifolds with 
$F=\S^1\times \S^1$ as constructed in \cite{HKMetrics}. \medskip

\subsection{First main result: invariance of analytic torsion}

In case $\dim F = 0$, the (renormalized) analytic torsion for $\phi$-metrics 
has been studied by Guillarmou and Sher \cite{GuiShe}.
An extension to the case $\dim F \geq 1$ was outlined by Talebi \cite{Mohammad}. In our first main result, we will present
a different construction and establish invariance properties of the analytic torsion. The result is 
is a combination of Definition \ref{eq:LogTDef} and Corollary \ref{Cor:deltaFFormula}.

\begin{thm}\label{main1} Let $(M,g_\phi)$ be a $\phi$-manifold and $(E,h)$ a flat vector bundle over $\overline{M}$.
We impose Assumption \ref{assumption}. Then the following statements hold. 
\begin{enumerate}
\item the renormalized analytic torsion $T(M,E;g_\phi)$ is well-defined; 
\item if $m=\dim(M)$ is odd, then the torsion norm $\norm{\cdot}_{(M,E,g_\phi)}^{RS}$ on the determinant line 
$\det H^*_{2}(M;E)$ of reduced $L^2$-cohomology is invariant under perturbations of $g_\phi$ 
of the form $g_\phi + h$ with 
$\vert h\vert_{g_\phi} =\mathcal{O}(x^{b+1+\alpha})$ as $x\to 0$
for any $\alpha > 0$ and $b = \dim B$. 
\end{enumerate}
\end{thm}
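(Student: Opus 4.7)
For part (a), I would build on the heat kernel calculus for $\phi$-Laplacians developed by Grieser, Talebi and the second-named author in \cite{MohammadBorisDaniel, MohammadBoris, Mohammad}. Under Assumption \ref{assumption}, their construction provides a polyhomogeneous heat kernel for $\Delta_k$, whose pointwise trace, integrated against a suitably renormalized $\phi$-volume form, produces a heat trace $\Tr(e^{-t\Delta_k})$ with a short-time asymptotic expansion of the form $\sum_j a_j t^{(j-m)/2} + \sum_\ell b_\ell t^\ell \log t$ as $t\to 0^+$. Combined with the expected exponential decay of the non-harmonic contribution as $t\to\infty$, which follows from the spectral structure guaranteed by Assumption \ref{assumption} and the identification of reduced $L^2$-cohomology with harmonic $L^2$-forms, the Mellin transform \eqref{eq:HeatTorsion} extends $\zeta(s,\Delta_k)$ meromorphically to $\C$ with $s=0$ a regular point. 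The renormalized torsion is then defined by \eqref{eq:AnalyticTorsion}.

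For part (b), I would employ Ray and Singer's variational strategy. Consider a family $g_u := g_\phi + u h$ with $u\in[0,1]$, let $\ast_u$ be the associated Hodge star, and set $\alpha_u := \ast_u^{-1}\dot{\ast}_u$. Formal differentiation of $\log \norm{\cdot}_{(M,E,g_u)}^{RS}$ produces an anomaly expression of the shape $\FP_{s\to 0}\frac{d}{ds}\Tr\bigl(\alpha_u \sum_k (-1)^k\Delta_k(u)^{-s}\bigr)$ restricted off the kernels. In odd dimension, the pointwise supertrace of the integrand vanishes identically because conjugation by $\ast_u$ intertwines $\Delta_k(u)$ and $\Delta_{m-k}(u)$ while contributing a sign $(-1)^{m-1}=1$; hence the interior contribution is zero. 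The role of the decay hypothesis $\vert h\vert_{g_\phi} = \mathcal{O}(x^{b+1+\alpha})$ is twofold: it justifies differentiating under the renormalized trace, and it ensures that perturbations of the harmonic projector produce no $L^2$-change in the determinant-line factor of the Ray-Singer metric.

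The main technical obstacle is controlling the boundary contribution introduced by the $\FP$ prescription. Since the $\phi$-volume element grows like $x^{-(b+2)}$ near the face at infinity, the threshold $b+1$ is exactly the critical decay rate for $\vert h\vert_{g_\phi}$, and the strict inequality $\alpha>0$ is what tips the balance so that the renormalization boundary term vanishes. Verifying this requires expanding the heat kernel restricted to the boundary face of the $\phi$-heat double space, identifying the leading singular coefficient in $x$ of the variational integrand, and checking that it is multiplied by a factor decaying strictly faster than $x^{-(b+1)}$ under the assumed decay of $h$. This boundary analysis is the heart of the argument and is precisely what pins down the exponent appearing in the statement.
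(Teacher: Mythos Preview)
Your outline for part (a) has a genuine gap: on a $\phi$-manifold the Hodge Laplacian generally has continuous spectrum reaching $0$, so there is no exponential decay of the non-harmonic part of the heat trace as $t\to\infty$. The paper neither assumes nor proves a spectral gap. Instead, the large-time behaviour is controlled through the low-energy resolvent of \cite{MohammadBorisDaniel}: via the contour formula \eqref{eq:HeatOperator} one obtains a polyhomogeneous description of the pointwise heat trace on the blown-up diagonal (Theorem \ref{thm:LongHeatKernelPoly}), and after the renormalized spatial integral this yields an asymptotic expansion of $\Tr^R(e^{-t\Delta_k})$ in powers of $t^{-1/2}$ and $\log t$ as $t\to\infty$ (Proposition \ref{Lem:RegTracePoly}). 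The Mellin transform therefore splits into $I_0(s)$ (convergent for $\Re s\gg 0$) and $I_\infty(s)$ (convergent for $\Re s\ll 0$), each continued separately. Two further corrections: the paper does not claim $s=0$ is regular but defines the torsion through the regular part $\zeta_{\textup{reg}}$ of $\zeta$ at $s=0$; and under Assumption \ref{assumption} the short-time expansion contains no $\log t$ terms (Corollary \ref{cor:ShortHeatKernelPoly}).

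For part (b) your mechanism is not the one the paper uses, and the ``boundary contribution'' you anticipate does not actually arise. The decisive observation (Proposition \ref{delta-prop}) is that the decay $|h|_{g_\phi}=O(x^{b+1+\alpha})$ makes $L e^{-t\Delta}$ \emph{honestly} trace class for every $t>0$: the pointwise norm $\|L\|=O(x^{b+1+\alpha})$ against the bounded diagonal heat kernel is integrable with respect to $d\Vol_\phi\sim x^{-b-2}\,dx\,d\Vol_{\partial M}$ precisely when $\alpha>0$. Consequently the Ray--Singer identity $\delta\,\Tr^R((-1)^N N e^{-t\Delta})=-t\tfrac{d}{dt}\Tr((-1)^N L e^{-t\Delta})$ holds with an ordinary trace on the right, and there is no renormalization defect to analyse. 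The variation of $\log T$ then reduces to the constant terms of $\Tr((-1)^{N+1}L e^{-t\Delta})$ as $t\to 0$ and $t\to\infty$; the former vanishes because the local heat expansion has index set $-m+2\N$ and $m$ is odd, while the latter equals $\Tr((-1)^{N+1}L P_{\ker_{L^2}\Delta})$, which is exactly the variation of $\|\cdot\|_{L^2}^{-1}$. Your proposed pointwise Poincar\'e-duality cancellation is not how odd-dimensionality enters here; with the endomorphism $\alpha_u$ inserted the Hodge-star symmetry does not kill the integrand pointwise, and the vanishing comes solely from the parity of the short-time expansion.
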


\subsection{Second main result: gluing formula for analytic torsion}

Our second main result is obtained in Theorem \ref{thm:Gluing}, which we summarize as follows.

\begin{thm}\label{main2} Let $(M,g_\phi)$ be a $\phi$-manifold and $(E,h)$ a flat vector bundle over $\overline{M}$.
We impose Assumption \ref{assumption} and \ref{Assumption:GaporAcyc}. 
Assume furthermore that $g_\phi$ is product near $\{1\}\times \partial M \subset M$. Then, 
the analytic torsion admits a gluing formula for a cut along $\{1\}\times \partial M \subset M$, as illustrated in 
Figure \ref{cut-figure}. Namely, consider 
$$
(M_1:= (0,1] \times \partial M, g_\phi \restriction M_1), \quad 
(M_2:= \overline{M \backslash M_1},  g_\phi \restriction M_2).
$$ 
Note that $\partial M_1 = \partial M_2 = \{1\}\times \partial M$.
Then there is a canonical isomorphism of determinant lines 
$$
\Phi: \det H^*_{(2)} (M_1, \partial M_1, E) \otimes \det H^*(M_2, E) \to \det H^*_{(2)} (M,E) ,
$$
and the (renormalized) analytic torsion norms on these determinant lines are related by
\begin{equation}\label{gluing-corr}
\norm{\alpha}_{(M_1, \partial M_1,E;g_\phi)}^{RS} \cdot \norm{\beta}_{(M_2,E;g_\phi)}^{RS} =
2^{\frac{\chi(\partial M,E)}{2}}\norm{\Phi (\alpha \otimes \beta )}_{(M, E; g_\phi)}^{RS}.
\end{equation}
\end{thm}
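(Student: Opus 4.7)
The plan is to reduce to the Vishik--Lesch gluing formula on a compact approximation of $M$ and then pass to the limit under the renormalization scheme provided by Theorem \ref{main1}. For $\epsilon \in (0,1)$ set
\[
M_{1,\epsilon}:=[\epsilon,1]\times \partial M \subset M_1, \qquad M_\epsilon := M_{1,\epsilon}\cup_{\{1\}\times \partial M} M_2,
\]
so that $M_\epsilon$ is a smooth compact manifold with the single regular boundary component $\{\epsilon\}\times \partial M$, while $M_{1,\epsilon}$ is compact with two boundary components. Since $g_\phi$ is product near $\{1\}\times \partial M$ by hypothesis, the classical gluing formula of Lesch \cite{Les:GFA}, building on Vishik \cite{Vis:GRS}, applies to $M_\epsilon$ cut along $\{1\}\times \partial M$, with relative boundary conditions on $M_{1,\epsilon}$ and absolute conditions on $M_2$ there (and, say, absolute conditions at $\{\epsilon\}\times \partial M$ on both sides). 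This yields the truncated identity
\begin{equation}\label{prop-trunc}
\norm{\alpha_\epsilon}^{RS}_{(M_{1,\epsilon},\{1\}\times \partial M, E; g_\phi)}\cdot
\norm{\beta}^{RS}_{(M_2, E; g_\phi)}
= 2^{\chi(\partial M,E)/2}\,\norm{\Phi_\epsilon(\alpha_\epsilon \otimes \beta)}^{RS}_{(M_\epsilon, E; g_\phi)},
\end{equation}
in which the Euler-characteristic correction factor is already the one claimed in \eqref{gluing-corr}.

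Next I would pass to the limit $\epsilon \to 0$. By part (1) of Theorem \ref{main1} the renormalized torsion on $M$ arises as the finite part in an asymptotic expansion of the zeta-regularized determinants of the Hodge Laplacians, and the same prescription applied to $(M_1,g_\phi)$, which carries a $\phi$-end at $x=0$ and a regular boundary at $x=1$, defines $\norm{\cdot}^{RS}_{(M_1,\partial M_1,E;g_\phi)}$. One then shows that, after this renormalization, the quantities $\log \norm{\,\cdot\,}^{RS}_{(M_\epsilon,E;g_\phi)}$ and $\log \norm{\,\cdot\,}^{RS}_{(M_{1,\epsilon},\{1\}\times \partial M,E;g_\phi)}$ converge to their infinite-end analogues as $\epsilon \to 0$. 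The structural point making this work is that the $\phi$-collar $[0,\epsilon]\times \partial M$ embeds isometrically into both $M$ and $M_1$, so the divergent parts of their $\epsilon$-expansions cancel term by term when \eqref{prop-trunc} is rewritten logarithmically and the limit is taken.

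The cohomological isomorphism $\Phi$ is produced by a Mayer--Vietoris argument for the decomposition $M = M_1 \cup M_2$ with intersection retracting onto $\{1\}\times \partial M$. Under Assumption \ref{Assumption:GaporAcyc} the reduced $L^2$-cohomologies involved are finite-dimensional and represented by genuine harmonic forms, so that the long exact sequence induces, via the standard algebraic formalism of Lesch \cite{Les:GFA}, the canonical determinant-line isomorphism $\Phi$. Convergence of the corresponding $L^2$-norms under truncation follows from the decay of harmonic representatives into the $\phi$-end provided by Assumption \ref{assumption} together with Assumption \ref{Assumption:GaporAcyc}.

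The main obstacle lies in the middle step, namely establishing uniform-in-$\epsilon$ heat kernel asymptotics on $M_\epsilon$ and on $M_{1,\epsilon}$ near the $\phi$-end, in the spirit of \cite{MohammadBorisDaniel,MohammadBoris,Mohammad}, but now in the presence of an additional regular boundary at $\{\epsilon\}\times \partial M$ carrying local boundary conditions. Granting these estimates, the divergent contributions in the expansion of each logarithmic torsion depend only on the germ of $(M,g_\phi)$ at the $\phi$-end, which coincides with that of $M_1$, so they cancel in the logarithm of \eqref{prop-trunc} and the limit $\epsilon \to 0$ delivers \eqref{gluing-corr}.
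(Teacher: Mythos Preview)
Your truncation-and-limit strategy is \emph{not} the route taken in the paper, and the step you yourself flag as ``the main obstacle'' is in fact a genuine gap. The paper does not prove the gluing formula by approximating $M$ by compact manifolds $M_\epsilon$ and passing to a limit. Instead, it invokes \cite[Theorem~2.9]{Ver} directly on the non-compact $\phi$-manifold: one introduces the Vishik family of transmission conditions
\[
\mathscr{D}^k_\theta = \{(\omega_1,\omega_2)\in \Omega^k_{(2)}(N,E)\oplus\Omega^k(K,E)\colon \cos\theta\,\iota_1^*\omega_1=\sin\theta\,\iota_2^*\omega_2\},\qquad \theta\in(0,\pi/2),
\]
differentiates the torsion in $\theta$, and integrates from the split situation at $\theta=0$ to the glued one at $\theta=\pi/4$. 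The analytic inputs this requires are precisely the heat-kernel and resolvent asymptotics already established in the paper (Theorems~\ref{thm:LongHeatKernelPoly} and \ref{thm:ShortHeatKernelPoly}, Proposition~\ref{Lem:RegTracePoly}), together with finite-dimensionality of the $L^2$-kernels; the only new work is to check that $\dim H^k_\theta(M,E)$ is independent of $\theta$, which is where Assumption~\ref{Assumption:GaporAcyc} enters via the long exact sequence associated to $\mathscr{D}^*_\theta$.

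Your approach, by contrast, requires relating the honest (discrete-spectrum) torsion of $M_\epsilon$ with an artificial boundary at $x=\epsilon$ to the renormalized torsion of $M$, which is defined through the finite part of $\int \tr e^{-t\Delta}\,d\mathrm{Vol}_\phi$ and \emph{not} as any limit of truncated torsions. Showing that these agree up to computable divergent terms is itself a substantial theorem of ``analytic surgery'' type (cf.\ \cite{ARS,Hass:ASA}); it is not a consequence of Theorem~\ref{main1}, and the uniform-in-$\epsilon$ heat asymptotics you would need near a receding boundary are not available from \cite{MohammadBorisDaniel,MohammadBoris}. So the proposal, as written, does not close: the limiting step is asserted rather than proved, and establishing it would amount to proving a result at least as hard as the one you are after.
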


\begin{figure}[h]
\includegraphics[scale=0.3]{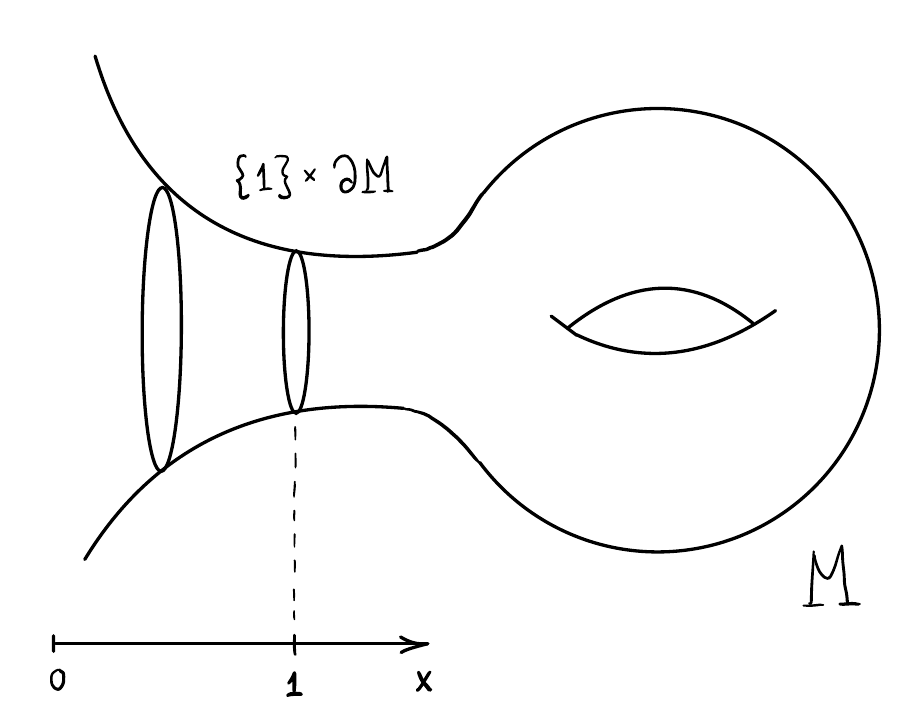}
\caption{Cutting hypersurface $\{1\}\times \partial M \subset M$}
\label{cut-figure}
\end{figure}

Obviously, combining our result with \cite{Les:GFA} yields
a gluing formula for cutting along any separating hypersurface in 
$M$, not necessarily only along $\{1\}\times \partial M$.

\subsection{Third main result: wedge degeneration}

Our third and final main result is an application of the first two results, and is 
concerned with a singular degeneration. We consider a $\phi$-manifold $(M,g_\phi)$
and a family $\cU_\varepsilon = (0,\varepsilon)_x \times \partial M$ of collars of the boundary. 
We set $M_\varepsilon := M \backslash \cU_\varepsilon$. We can now consider a family of closed manifolds $(K_\varepsilon, g_\varepsilon)$
which are obtained by gluing rescaled $\varepsilon M_\varepsilon$ to a compact manifold $K'$ with boundary
$\partial K' = \partial M$. Taking $\varepsilon \to 0$, we obtain a singular manifold $(\Omega,g_\omega)$ in the
limit, with the metric given in terms of $r= 1/x$ in the singular neighborhood near $r=0$ by
\begin{equation}
g_\omega \restriction (0,1)_r \times \partial M = dr^2+ r^2 \phi^*g_{B} + g_F+ \mathfrak{h}(1/r).
\label{eq:wedge}
\end{equation}

\begin{figure}[h]
\includegraphics[scale=0.6]{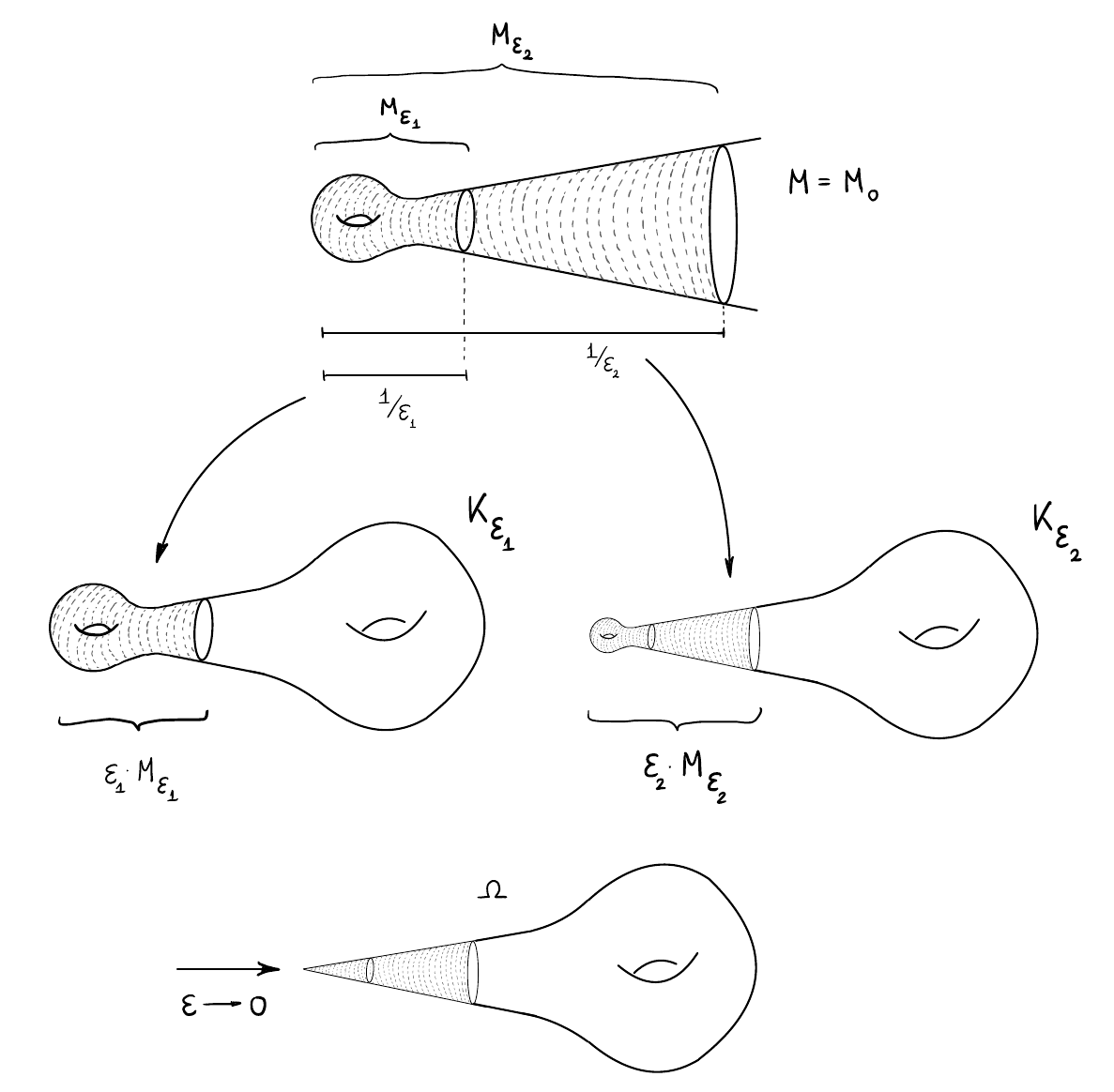}
\caption{Conic and wedge degeneration}
\label{degeneration-figure}
\end{figure}

This is illustrated in Figure \ref{degeneration-figure} for the case $\dim F = 0$,
which has been studied in \cite{GuiShe} under the name of \emph{conic degeneration}. 
The metric $g_\omega$ is not quite a wedge metric, since in the wedge case, $\partial M$
is the total space of a fibration $\psi: \partial M \to F$ with typical fibre $B$ and the 
wedge metrics are of the form
\begin{align}\label{reversed}
dr^2+ r^2 g_{B} + \psi^*g_F,
\end{align}
i.e. the roles of base and fibres being reversed, when compared to \eqref{eq:wedge}.
Hence we can regard the limit space $(\Omega,g_\omega)$ as a wedge space only for 
trivializable fibrations $\phi$. Our third main result is proved below 
in Theorem \ref{main3-rephrazed} and reads as follows (we refer the reader to Theorem \ref{main3-rephrazed} for precise definitions of the spaces $K$ and $\Omega$). 

\begin{thm}\label{main3} Consider the singular degeneration as described above. 
\begin{enumerate}
\item impose  Assumption \ref{assumption} on the complete $\phi$-manifold $(M,g_\phi)$; 
\item write $(K,g):=(K_1, g_1)$ for the compact smooth manifold in Figure \ref{degeneration-figure};
\item assume $\partial M \cong B \times F$ and consider the limiting wedge manifold $(\Omega,g_\omega)$ 
in Figure \ref{degeneration-figure} with wedge along the base $F$ and cone link $B$ (note the interchange of the 
roles of the base and the link). Assume moreover $\dim F$ is even. 
\end{enumerate}
Then there is a canonical isomorphism of determinant lines 
\[
\Phi: \det H^*_{(2)} (M,E) \otimes \det H^*_{(2)} (\Omega, E) \to \det H^* (K,E) ,\]
and the scalar analytic torsions are related for any $\varepsilon > 0$ by
\begin{equation}\label{gluing-corr2}
\begin{split}
\log T(K_\varepsilon, E; g_\varepsilon) &=
\log T(M,E;g_\phi) + \log T(\Omega,E;g_\omega) \\ &+ 
\frac{\norm{\alpha}_{L^2(M,E;g_\phi)} \cdot \norm{\beta}_{L^2(\Omega,E;
g_\omega)}}{\norm{\Phi (\alpha \otimes \beta )}_{L^2(K_\varepsilon, E; g_\varepsilon)}},
\end{split}
\end{equation}
for any non-zero $\alpha \otimes \beta \in \det H^*_{(2)} (M,E) \otimes \det H^*_{(2)} (\Omega, E)$. 
\end{thm}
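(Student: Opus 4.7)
The approach is to combine three gluing formulas—Theorem \ref{main2} for the $\phi$-manifold $(M,g_\phi)$, an analogous wedge gluing (derivable from \cite{ARS3} or by parallel arguments for wedge torsion) for $(\Omega, g_\omega)$, and Lesch's classical formula \cite{Les:GFA} for the compact smooth manifold $(K_\varepsilon, g_\varepsilon)$—and then to exploit the natural $\phi$-rescaling that identifies the resulting model pieces.

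I would cut each of the three manifolds along matching hypersurfaces. First, cut $M$ along $\{1\}\times \partial M$ into a $\phi$-collar $M_1$ and a compact interior $M_2 = K'$; next, cut $\Omega$ along its $\{r=1\}$-locus into a cone neighborhood $\Omega_1$ and $\Omega_2 = K'$; and finally cut $K_\varepsilon$ along the attaching hypersurface into the rescaled piece $\varepsilon M_\varepsilon$ and the same $K'$. Applying the three gluing formulas expresses $\log T(M,E;g_\phi)$, $\log T(\Omega,E;g_\omega)$, and $\log T(K_\varepsilon,E;g_\varepsilon)$ as sums of torsions of the ends and of $K'$, together with $L^2$-norm ratios coming from the three determinant-line isomorphisms. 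Forming the difference $\log T(K_\varepsilon) - \log T(M) - \log T(\Omega)$ makes the $\log T(K')$ contributions cancel---$K'$ appears with matching sign in all three decompositions---and the combinatorial prefactors $2^{\chi(\partial M,E)/2}$ from the three gluings cancel for the same reason.

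The next step is to identify the remaining end-contributions. The key observation is that $(\varepsilon M_\varepsilon, g_\varepsilon|_{\varepsilon M_\varepsilon})$ is isometric to $(M_\varepsilon, \varepsilon^{-2} g_\varepsilon|_{M_\varepsilon})$, and after the radial change of variable $r = x/\varepsilon$ this rescaled metric matches, modulo the subleading term $\mathfrak{h}$, the truncated $\phi$-metric on $M_1$ near the inner boundary and the wedge/cone metric on $\Omega_1$ near the outer boundary. Invoking Theorem \ref{main1}(2), which ensures that the torsion is insensitive to the higher-order perturbation $\mathfrak{h}$, the rescaled neck contributes exactly $\log T(M_1, g_\phi|_{M_1}) + \log T(\Omega_1, g_\omega|_{\Omega_1})$. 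The remaining $\varepsilon$-dependence on both sides is then absorbed cleanly into the $L^2$-norm correction term on the right-hand side of \eqref{gluing-corr2}.

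The main obstacle is this end-identification: proving that the analytic torsion of the thin neck $\varepsilon M_\varepsilon$ splits exactly into a $\phi$-end and a wedge-end contribution, rather than producing a genuine cross term. This demands a careful parametrix or heat-kernel comparison for the Hodge Laplacian on $(\varepsilon M_\varepsilon, g_\varepsilon)$, which, as $\varepsilon \to 0$, interpolates between $\phi$-type behavior at the inner boundary and wedge-type behavior at the outer boundary, against the direct sum of the two model ends. The parity hypothesis that $\dim F$ is even is essential here, as it kills a potential anomaly on the wedge side (compare \cite{ARS3}) which would otherwise obstruct the clean decomposition. Tracking how the harmonic representatives and their $L^2$-norms transform under the $\varepsilon$-rescaling is finally what yields the precise norm ratio appearing in \eqref{gluing-corr2}.
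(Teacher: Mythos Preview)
Your overall strategy of applying gluing formulas and cancelling common compact pieces is the paper's, but two things go wrong. First, the identification $M_2 = K'$ is incorrect: in the actual setup $K$ is built from two \emph{distinct} pieces, $K = (M \setminus \cU_\phi) \cup_\Sigma (\Omega \setminus \cU_\omega)$, so $M_2 = M \setminus \cU_\phi$ (the interior of the $\phi$-manifold) and $K' = \Omega \setminus \cU_\omega$ are different manifolds. The cancellation still works once this is fixed---$M_2$ cancels between $M$ and $K$, and $K'$ cancels between $\Omega$ and $K$---but what is left over is $-\log T(\cU_\phi, \Sigma) - \log T(\cU_\omega)$ plus $L^2$-norm terms, not a contribution from $\varepsilon M_\varepsilon$.

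The genuine gap is how you handle this residue. Your proposed mechanism---that the torsion of $\varepsilon M_\varepsilon$ ``splits into a $\phi$-end and a wedge-end contribution''---is geometrically incoherent: $\varepsilon M_\varepsilon$ is the rescaled \emph{interior} of $M$ with a single boundary component, not a neck with two asymptotic ends; for $\varepsilon = 1$ it is simply $M_2$ and has already been cancelled. What the paper does instead is apply a \emph{fourth} gluing formula. Under $x = 1/r$ the two remaining collars $\cU_\phi$ and $\cU_\omega$ glue along $\Sigma$ to form the exact model wedge $\cU = \mathscr{C}(B) \times F$ with metric $dr^2 + r^2 g_B + g_F$ on $(0,\infty)_r \times B \times F$, and the problem reduces to proving $\log T(\cU, E; g_{\cU}) = 0$ (Proposition~\ref{Prop:ExactConeTorsion}) together with $H^*_{(2)}(\cU,E)=0$. \emph{This} is where the hypothesis $\dim F$ even actually enters: an exact scaling argument on the cone gives $\zeta(s, \Delta_{\mathscr{C}(B),k}) \equiv 0$ in every degree, and then the product formula combined with Ray--Singer's identity $\sum_j (-1)^j j\,\zeta(s,\Delta_{F,j}) \equiv 0$ for closed even-dimensional $F$ forces the torsion of $\cU$ to be trivial. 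Your attribution of the parity hypothesis to a wedge-side anomaly in the sense of \cite{ARS3} is not where it is used.
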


For $\dim F = 0$, this gives a purely $L^2$-cohomological interpretation of the intricate terms in the second formula of
\cite[Theorem 10]{GuiShe} without the modified spectral Witt condition imposed in \cite[Equation (10)]{GuiShe}.
In other words, this paper provides an independent take on the conic degeneration result in 
 \cite{GuiShe} and relates analytic torsion of a wedge and a $\phi$-manifold as a consequence
 of the gluing property. \bigskip

\noindent \emph{Acknowledgements.} The authors are indebted to Daniel Grieser for offering his invaluable advice concerning 
several aspects of this paper, particular Appendix \ref{appendix-integral}.

\section{Heat kernel for large times on a $\phi$-manifold}

This work relies heavily on the notions of polyhomogenous asymptotic expansions 
and blow-ups. We recall some basic concepts in Appendix \ref{appendix-microlocal}. 
We also refer the reader to \cite{Mel:TAP}, \cite[Chapter 1, Chapter 5]{Mel:Analysis} and 
\cite{Grieser} for much more detailed introductions. 

\subsection{Blow-up for the low energy resolvent kernel}

We shall always abbreviate 
$$
\Lambda^k_\phi M:= \Lambda^k {}^\phi T^*M.
$$
The integral kernel of the resolvent $(\Delta_k+\kappa^2)^{-1},\kappa > 0$ is 
a section (actually a half density) of 
$(\Lambda^k_\phi M \otimes E) \boxtimes (\Lambda^k_\phi M \otimes E)$ pulled back to 
$\overline{M}\times \overline{M}\times (0,\infty)$ by the projection $(p,q,\kappa) \mapsto (p,q)$ onto the first two factors. 
We refer the reader to the careful discussion in \cite{MohammadBorisDaniel} concerning the precise choice of bundles and half-densities and explain here briefly only the blowup of the base manifold 
$\overline{M}\times \overline{M}\times [0,\infty)$, necessary to turn the resolvent into a polyhomogeneous section. \medskip

This blowup is described in detail in \cite[§ 6, § 7]{MohammadBorisDaniel} and is illustrated in
Figure \ref{fig:resolvent-blowup}, with the original space 
$\overline{M}\times \overline{M}\times [0,\infty)$ indicated with thick dotted (blue) 
coordinate axes in the background ($[0,\infty) \equiv \R_+$).

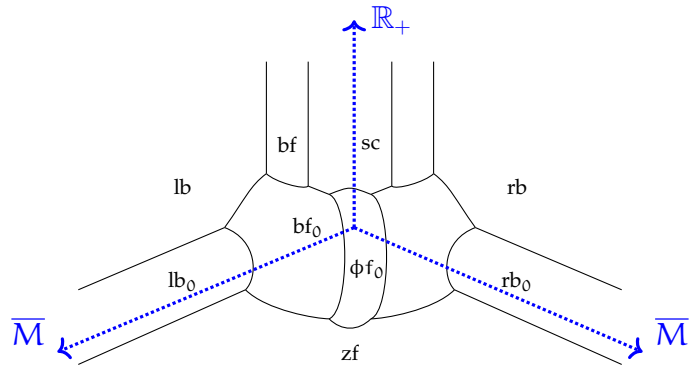
\begin{figure}[h]
\centering
\begin{tikzpicture}[scale =0.55]

\draw[-] (-2,2.3)--(-2,5);
\draw[-] (-1,2)--(-1,5);
\draw[-] (1,2)--(1,5);
\draw[-] (2,2.3)--(2,5);
\draw[-] (-3,1)--(-6.5,-0.5);
\draw[-] (3,1)--(6.5,-0.5);
\draw[-] (-2.5,-0.5)--(-6.5,-2.3);
\draw[-] (2.5,-0.5)--(6.5,-2.3);

\draw (-1,2)--(-0.5,1.8);
\draw (1,2)--(0.5,1.8);
\draw (-0.5,1.8).. controls (0,2) and (0,2) .. (0.5,1.8);

\draw (-1,2).. controls (-1.1,1.9) and (-1.8,2.1) .. (-2,2.3);
\draw (1,2).. controls (1.1,1.9) and (1.8,2.1) .. (2,2.3);

\draw (-2.5,-0.5).. controls (-2,-1) and (-0.8,-1.2) .. (-0.5,-1.2);
\draw (2.5,-0.5).. controls (2,-1) and (0.8,-1.2) .. (0.5,-1.2);
\draw (-0.5,-1.2).. controls (-0.2,-1.5) and (0.2,-1.5) .. (0.5,-1.2);

\draw (-3,1).. controls (-2.2,0.7) and (-2.2,-0.2) .. (-2.5,-0.5);
\draw (3,1).. controls (2.2,0.7) and (2.2,-0.2) .. (2.5,-0.5);
\draw (-2,2.3).. controls (-2.4,2) and (-2.5,1.7) .. (-3,1);
\draw (2,2.3).. controls (2.4,2) and (2.5,1.7) .. (3,1);

\draw (-0.5,-1.2).. controls (0,-1) and (0,1.6) .. (-0.5,1.8);
\draw (0.5,-1.2).. controls (1,-1) and (1,1.6) .. (0.5,1.8);

\node at (-4,-0.3) {\tiny{$\textup{lb}_{0}$}};
\node at (4,-0.3) {\tiny{$\textup{rb}_{0}$}};
\node at (-4,2) {\tiny{$\textup{lb}$}};
\node at (4,2) {\tiny{$\textup{rb}$}};
\node at (0,-2) {\tiny{$\textup{zf}$}};
\node at (-1,1) {\tiny{$\textup{bf}_{0}$}};
\node at (0.41,0) {\tiny{$\phi f_0$}};
\node at (0.5,3) {\tiny{$\textup{sc}$}};
\node at (-1.5,3) {\tiny{$\textup{bf}$}};

\draw[->, very thick, densely dotted, blue] (0.1,1)--(0.1,6);
\node[very thick, blue] at (1,6) {$\R_+$};
\draw[->, very thick, densely dotted, blue] (0.1,1)--(-7,-2);
\node[very thick, blue] at (-7.7,-1.5) {$\overline{M}$};
\draw[->, very thick, densely dotted, blue] (0.1,1)--(7,-2);
\node[very thick, blue] at (7.7,-1.5) {$\overline{M}$};

\end{tikzpicture}
 \caption{Resolvent blowup space $M^{2}_{\kappa, \phi}$}
  \label{fig:resolvent-blowup}
\end{figure}

The blowup is obtained as follows. First, one blows up 
the codimension $3$ corner $\partial M \times \partial M \times \{0\}$,
which defines a new boundary hypersurface $\textup{bf}_{0}$.
Then one blows up the codimension $2$ corners
$\overline{M} \times \partial M \times \{0\}$, $\partial M \times \overline{M} \times \{0\}$
and $\partial M \times \partial M \times \R_+$, which define new 
boundary faces $\textup{lb}_{0}$, $\textup{rb}_{0}$ and $\textup{bf}$, respectively. 
Next we blow up the (lifted) interior fibre diagonal 
\begin{align*}
&\textup{diag}_{\phi,\mathrm{int}} \times \R_+ = \{ (p, q, \kappa) \in \overline{\cU} \times 
\overline{\cU} \times \R_+: \phi(p) = \phi(p')\},
 \end{align*}
intersected with $\textup{bf}$. Here $\cU$ is a collar neighbourhood (so that $\phi$ is defined). This defines a new boundary hypersurface $\textup{sc}$. Finally, 
the resolvent blowup space $M^{2}_{\kappa, \phi}$ is obtained by one last blowup
of the (lifted) interior fibre diagonal with $\textup{bf}_{0}$, which defines the
boundary hypersurface $\phi f_0$. The resolvent blowup space comes with a 
canonical blowdown map
\[
\beta_{\kappa,\phi}\colon M_{\kappa,\phi}^2\to \overline{M}\times \overline{M}\times [0,\infty).
\]
The resolvent lifts to a polyhomogeneous section on the blowup space
$M_{\kappa,\phi}^2$ with a conormal singularity along the lifted diagonal 
in the sense of the result below. We refer the reader to 
\cite[Definition 7.4]{MohammadBorisDaniel} for a precise definition of the 
split calculus $(\kappa,\phi)$-calculus $\Psi^{-2,\mathcal{E}}_{\kappa,\phi,\mathscr{H}}(M)$
and can now state the following theorem.

\begin{thm}[{\cite[Theorem 7.11, Theorem 8.1]{MohammadBorisDaniel}}] \label{thm:IndexSets1}
Let $\Box_k=x^{-\frac{b-1}{2}} \Delta_k x^{\frac{b-1}{2}}$. 

\begin{enumerate}
\item Then $(\Box+\kappa^2)^{-1}$ is an element of the split $(\kappa,\phi)$-calculus $\Psi^{-2,\mathcal{E}}_{\kappa,\phi,\mathscr{H}}(M)$ with 
\begin{equation*}
\mathcal{E}_{\textup{sc}}\geq 0, \quad \mathcal{E}_{\phi \textup{f}_0}\geq 0, \quad \mathcal{E}_{bf_0}\geq -2, \quad \mathcal{E}_{lb_0},\mathcal{E}_{rb_0}>0, \quad \mathcal{E}_{\textup{zf}}\geq -2.
\end{equation*}

\item For any integer $\sigma$, $(\Box_k+\kappa^2)^{-\sigma}$ lies in 
the split  $(\kappa,\phi)$-calculus $\Psi^{-2\sigma,\mathcal{F}}_{\kappa,\phi,\mathscr{H}}(M)$ with
\begin{align*}
\mathcal{F}_{\textup{sc}}\geq 0, \hspace{1cm} \mathcal{F}_{\phi \textup{f}_0}\geq 
\min \, \{ \, 0, -2\sigma + (b+1) \, \}, \hspace{1cm} \mathcal{F}_{bf_0}\geq -2\sigma, 
\notag \\ \mathcal{F}_{lb_0},\mathcal{F}_{rb_0}>-2(\sigma-1), \hspace{1cm} 
\mathcal{F}_{\textup{zf}}\geq -2\sigma.
\end{align*}
\end{enumerate}
\end{thm}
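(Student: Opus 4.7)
Since the theorem is attributed to \cite{MohammadBorisDaniel}, my plan is to outline the parametrix construction that underlies it, and then to deduce the second statement from the first by iterated composition.

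For part (1), the plan is to build a parametrix for $\Box_k+\kappa^2$ on the blown-up space $M^2_{\kappa,\phi}$ by prescribing the leading normal operator at each boundary hypersurface and then correcting by Neumann iteration. The conjugation $\Box_k = x^{-\frac{b-1}{2}}\Delta_k x^{\frac{b-1}{2}}$ trivializes the scaling of the volume form and turns the leading radial part of $\Delta_k$ near $x=0$ into a symmetric operator, which is essential for identifying the model problems at $\textup{bf}_0$ and $\phi f_0$. At the scattering face $\textup{sc}$, the model is the standard Euclidean resolvent of an elliptic operator with parameter, whose off-diagonal behavior gives the bound $\mathcal{E}_{\textup{sc}}\geq 0$. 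At $\phi f_0$, the normal operator decouples along the $\phi$-fibration into a Bessel-type resolvent in the $\phi$-normal directions, again producing index $\geq 0$. At $\textup{bf}_0$ the model operator is the low-energy resolvent on the $\phi$-model $\R_+\times\partial M$; its leading singularity at the front face together with the shift from the conjugation by $x^{\frac{b-1}{2}}$ yields $\mathcal{E}_{\textup{bf}_0}\geq -2$. At $\textup{zf}$ the model is the fully resolved $\phi$-resolvent at energy zero, which is inverted using Assumption \ref{assumption}(d) on $D_B$ with values in $\mathscr{H}^*(F,E)$, giving $\mathcal{E}_{\textup{zf}}\geq -2$. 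Positivity of $\mathcal{E}_{\textup{lb}_0}, \mathcal{E}_{\textup{rb}_0}$ is obtained because the resolvent at $\kappa=0$ is actually smoothing in the $\phi$-scale away from the diagonal, so the left/right zero-faces contribute only subleading terms.

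Once these model inversions are in place, the Schwartz kernel of the parametrix is assembled as a polyhomogeneous section on $M^2_{\kappa,\phi}$ with a conormal singularity along the lifted diagonal, and the remainder is shown to lie in an ideal that can be removed by a Neumann series in the split $(\kappa,\phi)$-calculus. Existence and boundedness of the true resolvent (so that the series converges) follows from self-adjointness of $\Delta_k$ and positivity of $\kappa^2$, and gives the first item.

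For part (2), my plan is to write $(\Box_k+\kappa^2)^{-\sigma}$ as a $\sigma$-fold composition of $(\Box_k+\kappa^2)^{-1}$ and apply the composition theorem in the split $(\kappa,\phi)$-calculus. Because composition corresponds to a pullback to the triple space and pushforward along one factor, the resulting index sets add with a shift determined by the codimension of the face being integrated out. At $\textup{bf}_0$, $\textup{zf}$ and $\textup{sc}$ one obtains a direct multiplication: $-2\sigma$ at $\textup{bf}_0$ and $\textup{zf}$, and $0$ at $\textup{sc}$. The key subtlety is at $\phi f_0$: the lifted diagonal has codimension $b+1$ in this face, so the Pushforward Theorem produces the shift $\min\{0, -2\sigma+(b+1)\}$, because up to $\sigma$ with $2\sigma<b+1$ no divergence occurs, while for larger $\sigma$ the singularity of the symbolic diagonal dominates. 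At $\textup{lb}_0, \textup{rb}_0$ one similarly gets positivity of order $-2(\sigma-1)$ from the fact that after $\sigma-1$ compositions of kernels smoothing away from the diagonal, only a residual loss of $2$ powers per composition remains.

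The step I expect to be the main obstacle is the inversion of the normal operator at $\textup{bf}_0$: this requires a detailed understanding of the zero-energy resolvent of the model $\phi$-Laplacian, and it is precisely here that the spectral hypotheses on $D_B$ in Assumption \ref{assumption}(d) enter to guarantee invertibility and polyhomogeneous expansions. Everything else is a careful but routine bookkeeping exercise in the split calculus.
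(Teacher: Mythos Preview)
Your proposal is correct and matches the paper's approach: part (1) is taken directly from \cite[Theorem 7.11]{MohammadBorisDaniel}, and part (2) follows from part (1) by iterating the composition theorem \cite[Theorem 8.1]{MohammadBorisDaniel}. The paper's proof consists of exactly these two sentences, whereas you have supplied a sketch of the parametrix construction underlying the cited results; this is helpful context but not required here, since the theorem is explicitly a quotation from the reference.
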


\begin{proof}
The first statement is simply \cite[Theorem 7.11]{MohammadBorisDaniel}. 
The second statement follows from the first statement and the composition result
\cite[Theorem 8.1]{MohammadBorisDaniel} by inductive
composition of the resolvent with itself. 
\end{proof}

The conjugation by $x^{\frac{b-1}{2}}$ in going from $\Delta_k$ to $\Box_k$ changes the 
index sets away from the diagonal, but not along the (blown up) diagonal, i.e. the index sets for 
$\textup{zf}$, $\textup{sc}$, and $\phi \textup{f}_0$ are the same for $\Delta_k$ and $\Box_k$. 
These are the only index sets we will need later in our analysis.

\begin{Rem}\label{complex-k}
The above resolvents are for $\kappa$ real. We will need complex $\kappa$ as well below, but as in Remark \cite[Remark 20]{GuiShe}, the resolvent $(\Box_k+\kappa^2e^{2i\theta})^{-1}$ behaves as $ (\Box_k+\kappa^2)^{-1}$ for any $\theta\in \left(-\frac{\pi}{2},\frac{\pi}{2}\right)$ and the resolvents have a smooth $\theta$ dependence. This follows from the parametrix construction in \cite{MohammadBorisDaniel}. 
\end{Rem}

\subsection{The diagonal in $M^2_{\kappa,\phi}$}

We write $\textup{diag}_{\kappa,\phi}$ for the closure of the diagonal in 
the resolvent blowup space $M^2_{\kappa,\phi}$, namely 
\[
\textup{diag}_{\kappa,\phi}\coloneqq  \beta_{\kappa,\phi}^{-1}
(\textup{diag}(\overline{M})\times [0,\infty)_\kappa).
\]

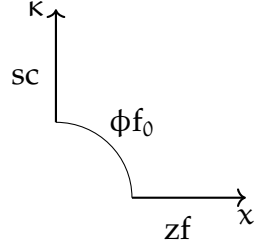
\begin{figure}[h]
\centering
\begin{tikzpicture}
\node at (-0.4,1.6) {$\textup{sc}$};
\node at (1.6,-0.4) {$\textup{zf}$};
\node at (1,1) {$\phi\textup{f}_0$};
\draw[thick,->] (1,0) -- (2.5,0) node[anchor=north] {$x$};;
\draw[thick,->] (0,1) -- (0,2.5) node[anchor=east] {$\kappa$};;
\draw (1,0) arc (0:90:1cm);
\end{tikzpicture}
\caption{The diagonal $\textup{diag}_{\kappa,\phi} \subset M^2_{\kappa,\phi}$.}
\label{lifted-diagonal}
\end{figure}

This is a $p$-submanifold of $M^2_{\kappa,\phi}$, and, as is visible in the
illustration of $M^2_{\kappa,\phi}$ in Figure \ref{fig:resolvent-blowup}, the only faces it intersects are 
$sc$, $\phi f_0$ and $zf$. We will abuse notation and write $sc, \phi f_0$ and $zf$ also for these intersections, as in 
Figure \ref{lifted-diagonal}. 

\subsection{The pointwise trace of the heat operator for large times}

In order to pass from the resolvent to the 
heat operator, recall that the resolvent and the heat operator are 
related by
\begin{equation}
e^{-t\Delta_k}=\frac{1}{2\pi i}\int_{\Gamma} e^{\xi t} (\Delta_k+\xi)^{-1}\, d\xi,
\label{eq:HeatOperator}
\end{equation}
where the contour $\Gamma = \Gamma(\theta,t)$ depends on parameters
$\theta\in \left(\frac{\pi}{2},\pi\right)$ and the time $t>0$ of the heat operator in \eqref{eq:HeatOperator}.
It is chosen to avoid the spectrum of $-\Delta_k$ and is illustrated in Figure \ref{fig:Contour}.
By construction, it consists of the 3 components 
\begin{equation}\label{gamma-components}
\ell_{\pm \theta} :=\{e^{\pm i\theta}\tau\, \vert \, \tau\geq t^{-1}\}, \quad
C_{\theta}:= \{t^{-1}e^{i\psi}\, \vert\, \psi\in (\theta,-\theta)\}.
\end{equation} 

\begin{figure}[h]
\begin{tikzpicture}[scale=0.65]
\draw[thick,->-=.5] (-1,-1)--(-3,-3);
\draw[thick,->-=.5] (-1,1) arc (135:-135:1.41);
\draw[thick,->-=.5] (-3,3)--(-1,1);
\draw[->] (-4,0)--(4,0);
\draw[->] (0,-4)--(0,4);
\filldraw[black] (0,0) circle (1pt) node[anchor=west] {$0$};
\node[anchor=east] at (2.5,2.5) {$\Gamma$};
\node[anchor=east] at (-2.5,2) {$\ell_{+ \theta}$};
\node[anchor=east] at (-2.5,-2) {$\ell_{- \theta}$};
\node[anchor=east] at (2,-1.5) {$C_{\theta}$};
\end{tikzpicture}
\caption{The contour $\Gamma$ in the Dunford integral \eqref{eq:HeatOperator}.}
\label{fig:Contour}
\end{figure}
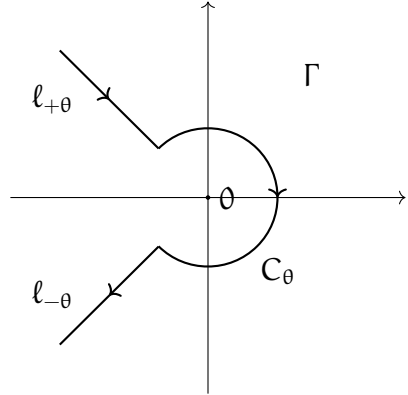
A relation similar to \eqref{eq:HeatOperator} is also valid between the integral kernels of 
$e^{-t\Delta_k}$ and a sufficiently high power of $(\Delta_k+\xi)^{-1}$.

\begin{Prop}\label{Prop:PolyBound}
For any $\nu\in \N$ we have a relation between operators
\begin{equation}
e^{-t\Delta_k}=\frac{(\nu-1)!}{(-t)^{(\nu-1)}}\frac{1}{2\pi i} \int_{\Gamma}  e^{\xi t} (\Delta_k+\xi)^{-\nu}\, d\xi,
\label{eq:ModifiedHeat}
\end{equation} 
where $\Gamma$ avoids the spectrum of $-\Delta_k$ as defined above. Choosing
\[ 
\nu=\left[\frac{m}{2}\right]+1=\left[\frac{\dim M}{2}\right]+1,
\]
\eqref{eq:ModifiedHeat} is valid for the integral kernels $H_k$ of $e^{-t\Delta_k}$ and 
$K_R$ of $(\Delta_k+\xi)^{-\nu}$. Moreover
\[
\norm{K_R}_{C^0(M\times M)}\leq C  \left\vert \textup{Im}(\xi) \right\vert^{-\nu} (1+2\vert \xi\vert)^{\nu},
\]
for some constant $C>0$, uniformly in $\| \xi \|\geq \delta$
for any fixed $\delta> 0$. Same statements hold for any $M$ of bounded geometry.
\end{Prop}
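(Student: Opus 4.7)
For part (i), the operator identity \eqref{eq:ModifiedHeat}, I would start from the standard Dunford representation \eqref{eq:HeatOperator} and substitute
\[
(\Delta_k+\xi)^{-\nu}=\frac{(-1)^{\nu-1}}{(\nu-1)!}\frac{\partial^{\nu-1}}{\partial \xi^{\nu-1}}(\Delta_k+\xi)^{-1},
\]
then integrate by parts $(\nu-1)$ times in $\xi$ along $\Gamma$. The boundary contributions at infinity on $\ell_{\pm\theta}$ vanish because $e^{\xi t}$ decays exponentially there for $\theta\in(\pi/2,\pi)$, while $\partial_\xi^k(\Delta_k+\xi)^{-1}$ is polynomially bounded. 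Collecting the factors of $(-1)^{\nu-1}$ and powers of $t$ (up to the orientation convention for $\Gamma$) rearranges into \eqref{eq:ModifiedHeat}.

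For part (ii), fix $\nu=\left[\frac{m}{2}\right]+1$, so that $2\nu>m$ and the Sobolev embedding $H^{2\nu}\hookrightarrow C^0$ is available. The spectral theorem combined with $\sigma(\Delta_k)\subset[0,\infty)$ gives $\|(\Delta_k+\xi)^{-1}\|_{L^2\to L^2}\leq|\textup{Im}(\xi)|^{-1}$. Iterating $\Delta_k(\Delta_k+\xi)^{-1}=\mathrm{Id}-\xi(\Delta_k+\xi)^{-1}$ bounds $\|\Delta_k^j(\Delta_k+\xi)^{-\nu}\|_{L^2\to L^2}$ by $C(1+2|\xi|)^j|\textup{Im}(\xi)|^{-\nu}$ for $0\leq j\leq \nu$. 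Combined with the elliptic estimate $\|u\|_{H^{2\nu}}\lesssim\|u\|_{L^2}+\|\Delta_k^\nu u\|_{L^2}$, this upgrades to
\[
\|(\Delta_k+\xi)^{-\nu}\|_{L^2\to H^{2\nu}}\lesssim(1+2|\xi|)^\nu|\textup{Im}(\xi)|^{-\nu}.
\]
By self-adjointness the analogous bound holds for the adjoint $(\Delta_k+\bar\xi)^{-\nu}$. A standard Schwartz-kernel-to-Sobolev argument---realizing $K_R(p,q)$ as $\bigl((\Delta_k+\xi)^{-\nu}\delta_q\bigr)(p)$, using $\delta_p,\delta_q\in H^{-s}$ for $s>m/2$, and applying Sobolev embedding in both variables---then converts this into the claimed pointwise estimate on $\|K_R\|_{C^0(M\times M)}$. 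The validity of \eqref{eq:ModifiedHeat} at the kernel level for this choice of $\nu$ follows from the estimate together with the exponential decay of $e^{\xi t}$ along $\ell_{\pm\theta}$, which renders the contour integral uniformly convergent in the kernel topology.

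For the bounded geometry case, all the ingredients above---elliptic regularity, Sobolev embedding, and uniform $H^{-s}$ control of Dirac masses---hold with constants independent of the base point, via a uniformly locally finite cover by normal coordinate patches of controlled size. The main technical obstacle I anticipate is precisely this final Schwartz-kernel step: on a compact manifold it is an immediate consequence of Sobolev embedding on $M\times M$, but in the bounded geometry setting one must carefully verify that every constant in the passage from operator norm to pointwise kernel bound can be chosen independently of the base point, which is exactly where bounded geometry is used.
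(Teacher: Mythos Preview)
Your proposal is correct and follows the same overall strategy as the paper: integration by parts along $\Gamma$ for the operator identity, and a Sobolev-to-kernel argument combined with algebraic resolvent identities for the $C^0$ bound. The one place where the paper is more direct is the passage from the $L^2$ resolvent bound to the relevant Sobolev mapping property. You establish $\|(\Delta_k+\xi)^{-\nu}\|_{L^2\to H^{2\nu}}$ via elliptic regularity, then invoke duality and an implicit interpolation to pair with $\delta_p,\delta_q\in H^{-\nu}$; this works, but the interpolation step is not spelled out. The paper instead conjugates by $(\Delta_k+1)^{\nu/2}$ and uses the binomial identity
\[
(\Delta_k+1)^{\nu/2}(\Delta_k+\xi)^{-\nu}(\Delta_k+1)^{\nu/2}=\sum_{j=0}^{\nu}\binom{\nu}{j}(1-\xi)^{j}(\Delta_k+\xi)^{-j},
\]
which immediately gives the $H^{-\nu}\to H^{\nu}$ operator norm and feeds straight into the Vaillant--Shubin bound $\|K_R\|_{C^0}\le C\|(\Delta_k+\xi)^{-\nu}\|_{H^{-\nu}\to H^{\nu}}$, avoiding the intermediate step. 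Your identification of bounded geometry as the place where uniform constants must be checked is exactly right; the paper handles this by citing the Vaillant--Shubin result directly.
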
  

\begin{proof}
Iterative integration by parts in \eqref{eq:HeatOperator} yields the formula \eqref{eq:ModifiedHeat}. 
In order to prove that the relation holds on the level of integral kernels, we 
show continuity of $K_R$ across the diagonal. The argument actually holds for the wider class of 
manifolds of bounded geometry. \cite[Proposition 3.5]{Vaillant}, building upon 
\cite[Theorem 3.7]{ShubinNonCompact} asserts for $\nu=\left[\frac{m}{2}\right]+1$
\[
\norm{K_R}_{C^0(M\times M)}\leq C \norm{(\Delta_k+\xi)^{-\nu}}_{\mathcal{B}(H^{-\nu},H^{\nu})},
\]
where $H^{-\nu} \equiv H^{-\nu}(M,E_k)$ and $H^{\nu} \equiv H^\nu (M,E_k)$ 
are the usual Sobolev spaces and $E_k\coloneqq E\otimes \Omega^k$. The norm on the right hand side is estimated by a little trick. 
Consider the following commutative diagram
\begin{center}
\begin{tikzcd}[row sep=large, column sep=large]
H^{-\nu}(M,E_k) \arrow{d}[left]{(\Delta_k+1)^{-\nu/2}} \arrow{r}{(\Delta_k+\xi)^{-\nu}}
& H^{\nu}(M,E_k) \\
L^2(M,E_k) \arrow{r}{P}
& L^2(M,E_k) \arrow{u}[right]{(\Delta_k+1)^{-\nu/2}}
\end{tikzcd}
\end{center}
where $P$ is defined to make the diagram commute; 
\begin{equation}\label{eq:P-Expr}
P\coloneqq (\Delta_k+1)^{\nu/2}\circ (\Delta_k+\xi)^{-\nu}\circ (\Delta_k+1)^{\nu/2}
=\sum_{j=0}^{\nu} \binom{\nu}{j}(1-\xi)^j (\Delta+\xi)^{-j}.
\end{equation}
We have $\xi$-independent bounds  from above (after possibly enlarging $C>0$)
\[\norm{(\Delta_k+1)^{-\nu/2}}_{\mathcal{B}(H^{-\nu},L^2)}\leq C,\]
\[\norm{(\Delta_k+1)^{-\nu/2}}_{\mathcal{B}(L^2, H^{\nu/2})}\leq C.\]
By \eqref{eq:P-Expr} we conclude
\begin{align*}
\norm{(\Delta_k+\xi)^{-\nu}}_{\mathcal{B}(H^{-\nu}, H^{\nu})}
&\leq C^2 \norm{P}_{\mathcal{B}(L^2, L^2)}, \\
&\leq C^2 \sum_{j=0}^{\nu} \binom{\nu}{j}
\vert 1-\xi\vert^j \norm{(\Delta_k+\xi)^{-j}}_{\mathcal{B}(L^2,L^2)}.
\end{align*}
By the well-known estimate \cite[Example 4, p. 210]{Yosida},
\[
\norm{(\Delta_k+\xi)^{-j}}_{\mathcal{B}(L^2,L^2)}\leq \left\vert \textup{Im}(\xi) \right\vert^{-j},
\]
we finally conclude
\begin{align*}
\norm{K_R}_{C^0(M\times M)} &\leq C \norm{(\Delta_k+\xi)^{-\nu}}_{\mathcal{B}(H^{-\nu}, H^{\nu})} 
\\ &\leq C^3\left\vert \textup{Im}(\xi) \right\vert^{-\nu}\sum_{j=0}^{\nu} 
\binom{\nu}{j}\vert 1-\xi\vert^j \left\vert \textup{Im}(\xi) \right\vert^{\nu-j}
\\ &\leq C^3 \left\vert \textup{Im}(\xi) \right\vert^{-\nu} (1+2\vert \xi\vert)^{\nu}.
\end{align*}
\end{proof}

Let us finally write down the asymptotic description for the pointwise trace of the 
resolvent kernel. This is a straightforward consequence of Theorem \ref{thm:IndexSets1}.
From now on, we  shall fix $\nu$ as in Proposition \ref{Prop:PolyBound};
\[\nu=\left[\frac{m}{2}\right]+1=\left[\frac{\dim M}{2}\right]+1.\]
\begin{Lem}
\label{lem:ResolventTrace}
The pointwise trace satisfies $\tr (\Delta_k+\kappa^2)^{-\nu} = F \cdot d\Vol_{\phi}$,
where $\beta_{\kappa,\phi}^* F$ is a polyhomogenous function 
on $\diag_{\kappa,\phi}$ with index set bounds (in notation of Theorem \ref{thm:IndexSets1})
\begin{align*}
\mathcal{F}_{\textup{sc}} \geq 0, \quad
\mathcal{F}_{\textup{zf}} \geq -2\nu, \quad
\mathcal{F}_{\phi \textup{f}_0} \geq -2\nu + (b+1).
\end{align*}
\end{Lem}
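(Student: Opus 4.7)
The plan is to feed Theorem \ref{thm:IndexSets1}(2) with $\sigma = \nu$ directly into the definition of pointwise trace, keeping careful track of the density-versus-half-density conventions along the diagonal. Since the pointwise trace of a kernel only sees its behaviour at $\diag_{\kappa,\phi}$, and since the three faces $\textup{sc},\,\phi\textup{f}_0,\,\textup{zf}$ are by Figure \ref{lifted-diagonal} the only boundary hypersurfaces of $M^2_{\kappa,\phi}$ that the diagonal meets, only the corresponding parts of the index family $\mathcal{F}$ are relevant. Moreover, the explicit note in the paper after Theorem \ref{thm:IndexSets1} guarantees that the index sets at $\textup{sc},\,\phi\textup{f}_0,\,\textup{zf}$ are identical for $(\Box_k+\kappa^2)^{-\nu}$ and $(\Delta_k+\kappa^2)^{-\nu}$; alternatively one uses $(\Delta_k+\kappa^2)^{-\nu}=x^{(b-1)/2}(\Box_k+\kappa^2)^{-\nu}x^{-(b-1)/2}$ and observes that on the diagonal the scalar weights $x^{\pm(b-1)/2}$ cancel pointwise, so the fibre trace is unchanged.

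Next I read off the bounds from Theorem \ref{thm:IndexSets1}(2):
\begin{equation*}
\mathcal{F}_{\textup{sc}}\geq 0,\quad \mathcal{F}_{\phi\textup{f}_0}\geq\min\{0,\,-2\nu+(b+1)\},\quad \mathcal{F}_{\textup{zf}}\geq -2\nu.
\end{equation*}
With $\nu=[m/2]+1$ and $m=\dim M=1+b+\dim F\geq b+1$ one has $2\nu\geq m+1\geq b+2$, so $-2\nu+(b+1)\leq -1<0$ and the minimum collapses to $-2\nu+(b+1)$. This is exactly the stated bound at $\phi\textup{f}_0$.

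The only remaining step is to convert the half-density kernel provided by the calculus into the form $F\cdot d\Vol_\phi$. I would argue this as follows: the canonical half-density on the interior of $M^2_{\kappa,\phi}$ is the pullback of $|d\Vol_\phi|^{1/2}\boxtimes|d\Vol_\phi|^{1/2}$, and its restriction to $\diag_{\kappa,\phi}$ is $|d\Vol_\phi|$. The blow-ups along the fibre diagonals introduce the standard Jacobian factors, which near the diagonal at $\phi\textup{f}_0$ produce a shift by $+(b+1)$ (this is simply the codimension of the interior fibre diagonal meeting $\textup{bf}_0$, i.e.\ $1+b$ from the $x$- and $y$-directions in which the diagonal has been blown up); at $\textup{sc}$ and $\textup{zf}$ the diagonal lies in the interior of the fibre of the blowdown map, so there is no additional shift. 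Consequently, writing the half-density kernel as $F\cdot|d\Vol_\phi|$ and taking the fibre trace on $\Lambda^k_\phi M\otimes E$ reproduces exactly the index bounds stated in the lemma.

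The main obstacle is the clean bookkeeping in the last step, i.e.\ showing that the density factor $d\Vol_\phi$ absorbs precisely $b+1$ orders of vanishing at $\phi\textup{f}_0$ and nothing at $\textup{sc}$ or $\textup{zf}$. Everything else is a direct substitution. Once the density/half-density conventions are fixed consistently with those of \cite[§7--8]{MohammadBorisDaniel}, the lemma follows as a formal restriction-to-diagonal statement from Theorem \ref{thm:IndexSets1}(2).
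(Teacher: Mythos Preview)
Your overall strategy---restrict Theorem \ref{thm:IndexSets1}(2) to the lifted diagonal and sort out the half-density conventions---is exactly what the paper does, and your second paragraph correctly extracts the bounds $\mathcal{F}_{\textup{sc}}\geq 0$, $\mathcal{F}_{\textup{zf}}\geq -2\nu$, $\mathcal{F}_{\phi\textup{f}_0}\geq -2\nu+(b+1)$ from the theorem. The problem is your third paragraph, where you claim the conversion from the half-density bundle $\Omega^{1/2}_{b\phi}$ to $F\cdot d\Vol_\phi$ produces an \emph{additional} shift of $+(b+1)$ at $\phi\textup{f}_0$ and no shift at $\textup{sc}$. This is incorrect on both counts, and if applied consistently would yield $\mathcal{F}_{\phi\textup{f}_0}\geq -2\nu+2(b+1)$, not the stated bound.

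The correct bookkeeping, carried out in the paper, is that there is \emph{no} shift at any of $\textup{sc}$, $\phi\textup{f}_0$, $\textup{zf}$. The point is that the calculus in \cite{MohammadBorisDaniel} records index sets relative to the bundle $\Omega^{1/2}_{b\phi}=\rho_{\textup{sc}}^{-(b+1)/2}\rho_{\phi\textup{f}_0}^{-(b+1)/2}\Omega_b^{1/2}$, which already has the Jacobian factors from the fibre-diagonal blowups built in. One then checks that $\mu:=\beta_{\kappa,\phi}^*\bigl(\tfrac{d\kappa}{\kappa}\,d\Vol_\phi(p)\,d\Vol_\phi(q)\bigr)^{1/2}$ is a smooth nonvanishing section of $\Omega^{1/2}_{b\phi}$ near $\textup{sc}$, $\phi\textup{f}_0$, $\textup{zf}$ (the factors $\rho_{\textup{sc}}^{-(b+1)/2}$ and $\rho_{\phi\textup{f}_0}^{-(b+1)/2}$ arise symmetrically, since both faces come from blowing up codimension-$(b+1)$ submanifolds). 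Hence writing the kernel as a coefficient times $\mu$ gives the \emph{same} index sets as in Theorem \ref{thm:IndexSets1}(2); restricting to the diagonal and discarding $\sqrt{d\kappa/\kappa}$ then gives $F\cdot d\Vol_\phi$ with those bounds unchanged. Your heuristic that ``at $\textup{sc}$ the diagonal lies in the interior of the fibre of the blowdown map'' is also off: $\textup{sc}$ is itself a blowup face, and the asymmetry you posit between $\textup{sc}$ and $\phi\textup{f}_0$ does not exist.
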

\begin{proof}
The index sets in Theorem \ref{thm:IndexSets1} are defined for the Schwartz kernels as 
sections of the half-density bundle $\Omega^{1/2}_{b\phi}(M^2_{\kappa,\phi})\otimes \End(E)$, see e.g. 
\cite[Definition 7.3]{MohammadBorisDaniel}. This latter half-density bundle is defined 
in \cite[(7.7)]{MohammadBorisDaniel} as
\[\Omega^{1/2}_{b\phi}\left(M^2_{\kappa,\phi}\right):= \rho_{\textup{sc}}^{-(b+1)/2}\rho_{\phi \textup{f}_0}^{-(b+1)/2} \Omega_b^{1/2}\left(M_{\kappa,\phi}^2\right).\]
Here, $\Omega_b \left(M_{\kappa,\phi}^2\right)$ denotes the space of the so-called $b$-densities, 
obtained by dividing smooth densities on $M_{\kappa,\phi}^2$ by the product of defining functions
of the boundary hypersurfaces. $\Omega_b^{1/2}\left(M_{\kappa,\phi}^2\right)$ are the corresponding half-densities. 
\medskip

Consider a smooth density $\omega$ on $\overline{M}^2 \times [0,\infty)_\kappa$ and boundary 
defining functions $x$ and $x'$ on the two copies of $\overline{M}$, extending the $x$-coordinates on the collar neighbourhoods. The $b$-density $\omega_b:= (\kappa xx')^{-1} \omega$ lifts to $M^2_{\kappa,\phi}$ as follows 
\begin{align}\label{projective1}
\beta_{\kappa,\phi}^* \omega_b \in (\rho_{\textup{sc}} \rho_{\phi \textup{f}_0})^{b+1} 
\Omega_b \left(M_{\kappa,\phi}^2\right).
\end{align}
Here, the defining function $\rho_{\textup{sc}}$ of $\textup{sc}$ comes with the power $(b+1), b= \dim B$, since 
the boundary hypersurface $\textup{sc}$ arises by blowing up a submanifold of $\textup{bf}$ of codimension $(b+1)$. 
Similarly, the defining function $\rho_{\phi \textup{f}_0}$ of $\phi \textup{f}_0$ comes also with the power $(b+1)$, since 
the boundary hypersurface $\phi \textup{f}_0$ arises by blowing up a submanifold of $\textup{bf}_0$ of codimension $(b+1)$.
Note furthermore 
\begin{align}\label{projective2}
\frac{d\kappa}{\kappa} \, d\Vol_{\phi}(p) \, \, d\Vol_{\phi}(q) = (xx')^{-1-b} \omega_b,
\end{align}
up to multiplication by some smooth function on $\overline{M}^2 \times [0,\infty)$.
Making only the behaviour near $\textup{sc}$, $\phi \textup{f}_0$ and $\textup{zf}$ explicit, we obtain
\begin{align}\label{projective3}
\beta_{\kappa,\phi}^* (xx') = 
(\rho_{\textup{sc}} \rho_{\phi \textup{f}_0})^{2}.
\end{align}
Combining \eqref{projective1}, \eqref{projective2} and \eqref{projective3} yields
\begin{align*}
\beta_{\kappa,\phi}^* \Bigl( \frac{d\kappa}{\kappa} \, d\Vol_{\phi}(p) \, \, d\Vol_{\phi}(q) \Bigr)  
\in (\rho_{\textup{sc}} \rho_{\phi \textup{f}_0})^{-1-b} 
\Omega_b \left(M_{\kappa,\phi}^2\right),
\end{align*}
where we again discarded contributions at the other boundary faces, 
except $\textup{sc}$, $\phi \textup{f}_0$ and $\textup{zf}$.
Consequently
\begin{align*}
&\mu\coloneqq \beta_{\kappa,\phi}^* \Bigl( \frac{d\kappa}{\kappa} \, d\Vol_{\phi}(p) \, \, d\Vol_{\phi}(q) \Bigr)^{\frac{1}{2}}  
\\ &\in (\rho_{\textup{sc}}\rho_{\phi \textup{f}_0})^{-\frac{b+1}{2}} \Omega_b^{1/2}\left(M_{\kappa,\phi}^2\right)
= \Omega^{1/2}_{b\phi}\left(M^2_{\kappa,\phi}\right).
\end{align*}
Hence, writing $(\Delta_k+\kappa^2)^{-\nu}$ with respect to the half density $\mu$ instead of 
$\Omega^{1/2}_{b\phi}(M^2_{\kappa,\phi})\otimes \End(E)$ does not change the asymptotics
at $\textup{sc}$, $\phi \textup{f}_0$ and $\textup{zf}$. Note that $\diag_{\kappa,\phi}$ does not intersect 
any other hypersurfaces. The statement now follows directly from Theorem \ref{thm:IndexSets1}, where we now discard
the component $\sqrt{d\kappa / \kappa}$ in the density. 
\end{proof}

\begin{thm}
\label{thm:LongHeatKernelPoly}
The pointwise trace satisfies $\tr(e^{-t\Delta_k}) = F' \cdot d\Vol_{\phi}$
where $F'$ lifts to a polyhomogenous function on 
$\textup{diag}_{\kappa,\phi}$ for $\kappa^2 = t^{-1} \leq 1$
with index set bounds 
$$\mathcal{E}'_{\textup{sc}}\geq 0, \quad
\mathcal{E}'_{\textup{zf}}\geq 0, \quad
\mathcal{E}'_{\phi \textup{f}_0}\geq b+1,$$
at $\textup{sc}$, $\textup{zf}$ and $\phi \textup{f}_0$, respectively.
\end{thm}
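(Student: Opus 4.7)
The strategy is to derive the asserted polyhomogeneity of $\tr(e^{-t\Delta_k})$ from the corresponding statement for $\tr(\Delta_k+\xi)^{-\nu}$ via the Dunford integral of Proposition~\ref{Prop:PolyBound}, interpreting the $\xi$-integration as a push-forward in the polyhomogenous category. Fix $\nu = \lfloor m/2\rfloor+1$ as in Proposition~\ref{Prop:PolyBound}, and write $\kappa^2 := 1/t \le 1$.

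\textbf{Step 1 (rescale the contour).} Substituting $\zeta = t\xi$ in \eqref{eq:ModifiedHeat} turns the $t$-dependent contour $\Gamma$ into a fixed contour $\widetilde\Gamma = \widetilde C_\theta \cup \widetilde\ell_{\pm\theta}$ with $\widetilde C_\theta = \{e^{i\psi}\,:\,\psi\in(\theta,-\theta)\}$ and $\widetilde\ell_{\pm\theta} = \{se^{\pm i\theta}\,:\,s\geq 1\}$, and yields
\[
\tr(e^{-t\Delta_k}) \;=\; \frac{(-1)^{\nu-1}(\nu-1)!\,\kappa^{2\nu}}{2\pi i}\int_{\widetilde\Gamma} e^{\zeta}\,\tr(\Delta_k + \kappa^2\zeta)^{-\nu}\,d\zeta.
\]

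\textbf{Step 2 (polyhomogeneity of the integrand).} By Remark~\ref{complex-k}, for each fixed $\zeta\in\widetilde\Gamma$ the operator $(\Delta_k+\kappa^2\zeta)^{-\nu}$ lies in the split $(\kappa,\phi)$-calculus with the index sets of Theorem~\ref{thm:IndexSets1} and depends smoothly on $\zeta$. Applying Lemma~\ref{lem:ResolventTrace}, its pointwise trace is polyhomogenous on $\diag_{\kappa,\phi}$ with index sets $\geq 0,\ -2\nu,\ -2\nu+(b+1)$ at $\textup{sc},\textup{zf},\phi\textup{f}_0$, uniformly on compact subsets of $\widetilde\Gamma$.

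\textbf{Step 3 (push-forward along $\widetilde\Gamma$).} On the compact arc $\widetilde C_\theta$ and on any bounded portion of $\widetilde\ell_{\pm\theta}$, the $\zeta$-integration is a push-forward along a b-fibration of manifolds with corners and, by Melrose's push-forward theorem, preserves polyhomogeneity with the same index sets. On the tails of $\widetilde\ell_{\pm\theta}$, the exponential factor $e^{s\cos\theta}$ (with $\cos\theta<0$) dominates the polynomial growth of the integrand guaranteed by Proposition~\ref{Prop:PolyBound}. Truncating each ray at $|\zeta|=R$, one obtains a polyhomogenous partial integral by the push-forward theorem; letting $R\to\infty$, the remainder is a smooth function on $\diag_{\kappa,\phi}$ that vanishes to all orders at $\textup{sc},\textup{zf},\phi\textup{f}_0$, so the full integral is polyhomogenous with the index sets of the integrand.

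\textbf{Step 4 (account for the prefactor).} At each of $\textup{sc},\textup{zf},\phi\textup{f}_0$ the parameter $\kappa$ vanishes to a positive order in projective coordinates, so multiplication by $\kappa^{2\nu}$ raises each index bound by $2\nu$. Thus
\[
\mathcal{E}'_{\textup{zf}}\ge -2\nu+2\nu=0,\qquad \mathcal{E}'_{\phi\textup{f}_0}\ge (-2\nu+b+1)+2\nu=b+1,\qquad \mathcal{E}'_{\textup{sc}}\ge 0+2\nu\ge 0,
\]
which are the asserted bounds. As a sanity check, on $\widetilde C_\theta$ alone the $\phi\textup{f}_0$ contribution scales like $\kappa^{-2\nu+(b+1)}\cdot|\kappa|^0\cdot|\widetilde C_\theta|$, and the prefactor $\kappa^{2\nu}$ produces exactly $\kappa^{b+1}$; the tail integrals on $\widetilde\ell_{\pm\theta}$ scale identically after the exponential is absorbed in a convergent integral in $s$.

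\textbf{Main obstacle.} The delicate point is Step~3: carrying out the contour integral as a push-forward in the polyhomogenous category requires handling the non-compact tails of $\widetilde\ell_{\pm\theta}$, so that the Gaussian-type decay in $\zeta$ must be coupled with the polynomial-in-$|\zeta|$ kernel bound of Proposition~\ref{Prop:PolyBound} uniformly on $\diag_{\kappa,\phi}$. Equivalently, one may expand $\tr(\Delta_k+\kappa^2\zeta)^{-\nu}$ into its polyhomogenous series on each face, integrate each term explicitly against $e^{\zeta}d\zeta$ on $\widetilde\Gamma$ (an analytic function of the exponent, smooth in $\kappa$), and control the tail via a finite-order Taylor remainder combined with the uniform bound of Proposition~\ref{Prop:PolyBound}.
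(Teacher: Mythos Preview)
Your substitution $\zeta=t\xi$ to freeze the contour is natural, and your treatment of the compact arc $\widetilde C_\theta$ matches the paper's argument for $C_\theta$. The genuine gap is in Step~3, in the handling of the rays.

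The split of $\widetilde\ell_{\pm\theta}$ at a \emph{fixed} $R$ cannot work. On the stretch $s\in[R,\kappa^{-2}]$ one has $|\xi|=\kappa^2 s\le 1$, so the hypothesis $|\xi|\ge\delta$ of Proposition~\ref{Prop:PolyBound} fails; instead the $\textup{zf}$-asymptotics of Lemma~\ref{lem:ResolventTrace} give $|\tr(\Delta_k+\kappa^2 s e^{i\theta})^{-\nu}|\sim(\kappa^2 s)^{-\nu}$, and hence
\[
\kappa^{2\nu}\int_R^{\kappa^{-2}} e^{-sc_\theta}\,(\kappa^2 s)^{-\nu}\,ds \;=\; \int_R^{\kappa^{-2}} e^{-sc_\theta}s^{-\nu}\,ds \;\xrightarrow[\kappa\to 0]{}\; \int_R^\infty e^{-sc_\theta}s^{-\nu}\,ds \;>\;0.
\]
So after the prefactor the tail is merely bounded, not $O(\kappa^\infty)$, and a uniform limit of polyhomogeneous functions is not automatically polyhomogeneous. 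The underlying reason this regime is hard is that the effective spectral parameter $\kappa'=\kappa\sqrt s$ sweeps from the $\textup{zf}$ region through $\phi\textup f_0$ into $\textup{sc}$ as $s$ runs over $[1,\infty)$; this is exactly the content of the paper's segment $\ell^0_{\pm\theta}$ (i.e.\ $|\xi|\in[1/t,1]$, equivalently $s\in[1,\kappa^{-2}]$), which is handled by the $b$-triple-space push-forward of Proposition~\ref{auxiliary-pushforward} and produces the extended union $(\mathcal F_{\phi\textup f_0}+2)\,\overline\cup\,\mathcal F_{\textup{sc}}$ at $\phi\textup f_0$. Only for $s\ge\kappa^{-2}$ (the paper's $\ell^\infty_{\pm\theta}$) is Proposition~\ref{Prop:PolyBound} actually applicable, and there it yields an $O(e^{-c_\theta t})$ contribution that genuinely vanishes to infinite order at $\textup{zf}$ and $\phi\textup f_0$. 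Your alternative suggestion of expanding the polyhomogeneous series term by term and integrating is on the right track, but carrying it out rigorously --- with uniform remainder control across all three faces simultaneously as $s$ traverses the different regimes --- amounts to the same push-forward argument.

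A minor point on Step~4: $\kappa$ does \emph{not} vanish at $\textup{sc}$ (that face is $\{x=0,\ \kappa>0\}$ in Figure~\ref{lifted-diagonal}), so multiplying by $\kappa^{2\nu}$ does not shift the $\textup{sc}$ index; it only shifts $\textup{zf}$ and $\phi\textup f_0$. Your final bound $\mathcal E'_{\textup{sc}}\ge 0$ is still correct, but for the wrong reason.
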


\begin{proof}
We start with \eqref{eq:ModifiedHeat}. Let us drop $k$ from the notation for the rest of the 
proof and write $H$ for the heat kernel and $K_R$ for the integral kernel of 
$(\Delta_k+\xi)^{-\nu}$, both with respect to the $\phi$-volume form $\dVol_{\phi}$. 
With this notation, we have at any $(p,t) \in M \times (0,\infty)$
\begin{equation}\label{trH}
\tr(H)(p,t)=\frac{(\nu-1)!}{(-t)^{(\nu-1)}}\frac{1}{2\pi i} \int_{\Gamma}  
e^{\xi t} \tr(K_R)(p,\xi)\, d\xi.
\end{equation}
We will sometimes drop the reference to the point $p$ in the notation below.
The contour $\Gamma = \Gamma(\theta,t)$ depends on parameters
$\theta\in \left(\frac{\pi}{2},\pi\right)$ and the time $(0,\infty)$, and has already been
introduced above right after \eqref{eq:HeatOperator}. Recall the partition
of the contour in \eqref{gamma-components}, which we now partition further
(we assume $t\geq 1$ here) 
\begin{align*}
\ell^{0}_{\pm \theta} := \{e^{\pm i\theta}\tau\, \vert \, 1/t \leq \tau\leq 1\}, \quad
\ell^{\infty}_{\pm \theta} := \{e^{\pm i\theta}\tau\, \vert \, \tau\geq 1\}, \quad
C_{\theta} := \{t^{-1}e^{i\psi}\, \vert\, \psi\in (\theta,-\theta)\}. 
\end{align*}

\subsubsection*{The integrals along the rays $\ell^{\infty}_{\pm \theta}$:}

We discuss the integral over $\ell^{\infty}_{+\theta}$, 
the argument for $\ell^{\infty}_{-\theta}$ works in exactly the same way, with the same bounds.
We parametrize the contour $\ell^{\infty}_{+\theta}$ by $\xi=\tau e^{i\theta}$ with $\tau \in [1,\infty)$ and find
\[
\int_{\ell^{\infty}_{+\theta}} e^{\xi t} \tr(K_R)(p,\xi)\, d\xi 
= - e^{i\theta}\int_{1}^\infty e^{t \tau \cos\theta} e^{it \tau \sin\theta} \tr(K_R)(p, \tau e^{i\theta})\, d\tau.
\]
Abbreviating $c_\theta=-\cos(\theta)>0$, we use Proposition \ref{Prop:PolyBound} to find a uniform bound 
$|\tr(K_R)(p,\tau e^{i\theta})| \leq C$ for $\tau \in [1,\infty)$, and consequently a bound on the integral
\[
\left\vert \int_{\ell^{\infty}_{+\theta}} e^{\xi t} \tr(K_R)(p,\xi)\, 
d\xi\right \vert \leq C\int_{1}^{\infty} e^{-t\tau c_{\theta}}  d\tau=\frac{C}{c_{\theta}} e^{-c_{\theta}t}.
\]
Hence the part of the contour integral on the right hand side of \eqref{eq:ModifiedHeat}, 
along $\ell^{\infty}_{+\theta}$ is exponentially decaying as $t\to \infty$, uniformly in $p \in M$. 
Hence this component vanishes to infinite order at $\textup{zf}$, $\phi \textup{f}_0$ and has
the index set $\mathcal{F}_{\textup{sc}}$ in its asymptotics at $\textup{sc}$.

\subsubsection*{The integral along the arc $C_{\theta}$:}

We parametrize $\xi=\frac{1}{t} e^{i\psi}$ and find
\begin{align*}
\frac{(\nu-1)!}{(-t)^{(\nu-1)}}\frac{1}{2\pi i} \int_{C_\theta} e^{\xi t} \tr(K_R)(p,\xi)\, d\xi
= - \frac{(\nu-1)!}{2\pi (-t)^{\nu}} \int_{-\theta}^\theta e^{i\psi + e^{i\psi}}\tr(K_R)\left(p, \frac{1}{t} e^{i\psi}\right)
d\psi.
\end{align*}
Since the integrand is polyhomogeneous in $[\overline{M} \times [0,\infty)_{1/ \sqrt{t}}; \partial M \times \{0\}]$,
uniformly in $\psi$, the same is true after integration with the same index sets, up to a shift due to 
the factor of $t^{-\nu}$, which lifts by $\beta_{\kappa,\phi}$ (we set $\kappa := 1/\sqrt{t})$) to $\rho^{2\nu}_{\phi \textup{f}_0}\rho^{2\nu}_{\textup{zf}}$. Hence, using Remark \ref{complex-k}, we
find following index sets for this component 
\begin{align*}
\mathcal{F}_{\textup{sc}} \geq 0, \quad
\mathcal{F}_{\textup{zf}} + 2\nu \geq 0, \quad
\mathcal{F}_{\phi \textup{f}_0} +2\nu \geq b+1,
\end{align*}
for $\textup{sc}$, $\textup{zf}$ and $\phi \textup{f}_0$, respectively.

\subsubsection*{The integrals along the segments $\ell^0_{\pm \theta}$:}

We discuss the integral over $\ell^{0}_{+\theta}$, 
the argument for $\ell^{0}_{-\theta}$ works in exactly the same way, with the same index sets.
We parametrize $\xi=\tau e^{i\theta}$ and find
\begin{equation}\label{third-integral}\begin{split}
\int_{\ell^{0}_{+\theta}} e^{\xi t}\tr( K_R)(p,\xi)\, d\xi &=-
e^{i\theta}\int_{1/t}^1  e^{t \tau e^{i\theta}} \tr(K_R)(p, \tau e^{i\theta})\, d\tau \\
&= -e^{i\theta}\int_{1/t}^1  e^{-t \tau c_\theta} e^{it \tau \sin\theta} \tr(K_R)(p, \tau e^{i\theta})\, d\tau \\
&= -e^{i\theta}\int_{1/\sqrt{t}}^1  e^{-t s^2 c_\theta} e^{it s^2 \sin\theta} \tr(K_R)(p, s^2 e^{i\theta})\, 2s^2 \, \frac{ds}{s},
\end{split}\end{equation}
where we have abbreviated $c_\theta=-\cos(\theta)>0$ as above and substituted $s= \sqrt{\tau}$ in the last step.
To see that this is polyhomogeneous, we can either derive asymptotic expansions of the 
integrals “by hand“. Alternatively, we can argue elegantly using a $b$-triple space argument
in Proposition \ref{auxiliary-pushforward}, where the argument remains unchanged if $[0,\infty)_{x_1}$
is replaced by $\overline{M}$. Thus the asymptotics of \eqref{third-integral} is given by the following index sets
\begin{align*}
\mathcal{F}_{\textup{sc}} \geq 0, \quad
\mathcal{F}_{\textup{zf}} + 2 \geq - 2\nu + 2, \quad
\Bigl( \mathcal{F}_{\phi \textup{f}_0} + 2\Bigr) \, \overline{\cup} \, \mathcal{F}_{\textup{sc}}  \geq - 2\nu + b+3,
\end{align*}
for $\textup{sc}$, $\textup{zf}$ and $\phi \textup{f}_0$, respectively.
Here, the $2$ on the left hand side of the inequalities is due to the $s^2$ factor in \eqref{third-integral}.
Thus, taking into account the shift by $t^{-\nu+1}$ in the formula \eqref{trH}, which lifts to 
$\rho^{2\nu-2}_{\phi \textup{f}_0}\rho^{2\nu-2}_{\textup{zf}}$, we find following index sets 
for the component of $\tr(H)$ over $\ell^{0}_{+\theta}$
\begin{align*}
\mathcal{F}_{\textup{sc}} \geq 0, \quad
\mathcal{F}_{\textup{zf}} + 2\nu \geq 0, \quad
\Bigl( \mathcal{F}_{\phi \textup{f}_0} + 2\nu \Bigr) \, \overline{\cup} \, \mathcal{F}_{\textup{sc}}  \geq b+1,
\end{align*}
for $\textup{sc}$, $\textup{zf}$ and $\phi \textup{f}_0$, respectively.
This concludes the proof of Theorem \ref{thm:LongHeatKernelPoly} by taking minima of all the 
contributions above at the individual faces.
\end{proof}

\begin{Rem}
For $\phi$-metrics with trivial fibres, $\dim F=0$, \cite{She} proves Theorem 
\ref{thm:LongHeatKernelPoly} in more generality in that they describe 
the heat kernel beyond the diagonal. The methods here can be easily extended
to get a description of the heat kernel the full $M^2_{\kappa,\phi}$ with $\kappa = 1/\sqrt{t}$.
\end{Rem}

From now on we will fix the natural $\phi$-volume $d\Vol_{\phi}$
as the measure of integration and identify the pointwise trace 
$\tr(e^{-t\Delta_k}) = F' \cdot d\Vol_{\phi}$ with the 
scalar function $F'$, satisfying the asymptotics in 
Theorem \ref{thm:LongHeatKernelPoly}.

\section{Heat kernel for small times on a $\phi$-manifold}
From now on, we shall write  $e^{-t\Delta_k}$  for both the heat kernel (previously called $H$ or $H_k$ and the operator. The heat kernel $e^{-t\Delta_k}$  is a section of 
$(\Lambda^k_\phi M \otimes E) \boxtimes (\Lambda^k_\phi M \otimes E)$ pulled back to 
$\overline{M}\times \overline{M}\times (0,\infty)$ as above. Here we briefly explain the blowup of the base manifold 
$\overline{M}\times \overline{M}\times [0,\infty)$, necessary to turn the heat kernel into a polyhomogeneous section
as $t \to 0$. This blowup is described in detail in \cite[§ 5]{MohammadBoris} and is illustrated in
Figure \ref{fig:heat-blowup}, with the original 
$\overline{M}\times \overline{M}\times [0,\infty)$ indicated with thick dotted (blue) coordinate axes in the background. \medskip

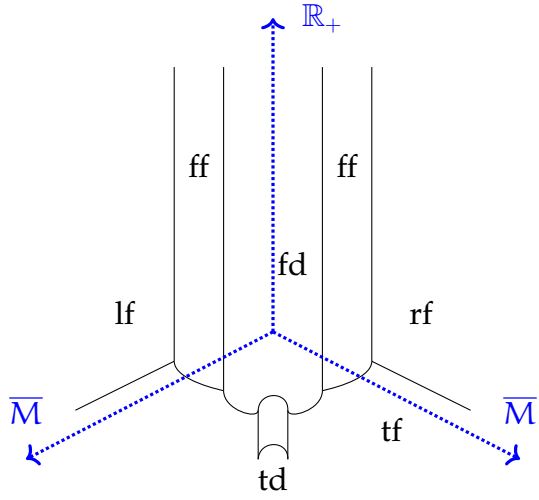
\begin{figure}[h]
\centering

\begin{tikzpicture}[scale = 1.3]

     \draw[-](1,-1) --(2,-1.5);
     \draw[-](-1,-1)--(-2,-1.5);
      \draw[-](1,-1) --(1,2);
      \draw[-](-1,-1)--(-1,2);
      \draw[-](-0.5,-1.3)--(-0.5,2);
      \draw[-](0.5,-1.3)--(0.5,2);
      
          \node at (0.2,0) {$\textup{fd}$};
         \node at (0.75,1) {$\textup{ff}$};
         \node at (-0.75,1) {$\textup{ff}$};
           \node at (-1.5,-0.5) {$\textup{lf}$};
          \node at (1.5,-0.5) {$\textup{rf}$};
           \node at (1.2,-1.7) {$\textup{tf}$};
           
         \draw (1,-1).. controls (1,-1.2) and (0.5,-1.3)..(0.5,-1.3);
  
            \draw (-1,-1).. controls (-1,-1.2) and (-0.5,-1.3)..(-0.5,-1.3);        
           
       \draw[-] (0.15,-1.5)--(0.15,-2);
        \draw[-](-0.15,-1.5)--(-0.15,-2);
 \draw (-0.15,-1.5).. controls (-0.14,-1.3) and (0.16,-1.3)..(0.15,-1.5);
 \draw (-0.15,-2).. controls (-0.14,-1.8) and (0.16,-1.8)..(0.15,-2);

           \draw (0.15,-1.5)..controls (0.15,-1.6) and (0.5,-1.5)..(0.5,-1.3);
            \draw (-0.15,-1.5)..controls (-0.15,-1.6) and (-0.5,-1.5)..(-0.5,-1.3);

          \node at (0,-2.2) {$\textup{td}$};
\draw[->, very thick, densely dotted, blue] (0,-0.7)--(0,2.5);
\node[very thick, blue] at (0.5,2.5) {$\R_+$};
\draw[->, very thick, densely dotted, blue] (0,-0.7)--(-2.5,-2);
\node[very thick, blue] at (-2.5,-1.5) {$\overline{M}$};
\draw[->, very thick, densely dotted, blue] (0,-0.7)--(2.5,-2);
\node[very thick, blue] at (2.5,-1.5) {$\overline{M}$};

\end{tikzpicture}

\caption{The heat blowup space $HM_{\phi}$.}
\label{fig:heat-blowup}
\end{figure}

The heat space is obtained by a sequence of blowups. 
First, one blows up $\partial M \times \partial M \times \R_+$, 
which defines a new boundary hypersurface $\textup{ff}$. Then one
blows up the lifted interior fibre diagonal, defined as above, which 
introduces a new boundary face $\textup{fd}$. Finally, one blows up
the so-called (lifted) \emph{temporal} diagonal 
$$
\textup{diag} = \{(p,q,t) \in \overline{M}\times \overline{M}\times [0,\infty) : 
p = q, t = 0\}.
$$
This final blowup introduces the boundary hypersurface $\textup{td}$.
The heat blowup space comes with a 
canonical blowdown map
\[
\beta_{t, \phi}\colon HM_{\phi} \to \overline{M}\times \overline{M}\times [0,\infty).
\]
The heat kernel, viewed as a section of the endomorphism bundle with 
appropriately rescaled $\phi$-cotangent bundles (see \cite[Definition 2.4]{MohammadBoris},
lifts to a polyhomogeneous section on $HM_{\phi}$.

\begin{thm}[{\cite[Corollary 7.2]{MohammadBoris}}] \label{thm:ShortHeatKernelPoly}
The heat kernel of $\Delta_k$ lifts to a polyhomogeneous section $\beta^*_{t,\phi} e^{-t\Delta_k}$
of the pullback bundle $\beta^*_{t,\phi} ((\Lambda^k_\phi M \otimes E) \boxtimes (\Lambda^k_\phi M \otimes E))$
on the heat space $HM_\phi$, vanishing to infinite order at $\textup{ff}$, $\textup{tf}$, $\textup{rf}$ and 
$\textup{lf}$, smooth at $\textup{fd}$, and 
of order $(-m)$ at $\textup{td}$. 
\end{thm}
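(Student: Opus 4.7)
The plan is to build the heat kernel by constructing an explicit parametrix that already lives polyhomogeneously on $HM_\phi$ with the stated leading behaviour, and then correcting it by a Duhamel/Volterra series whose error vanishes to infinite order at all remaining faces.

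First, near the temporal diagonal $\textup{td}$, the $\phi$-geometry is, to leading order, Euclidean in $\phi$-normal coordinates, so I would run a standard Minakshisundaram--Pleijel construction: start from the Euclidean Gaussian $(4\pi t)^{-m/2}\exp(-d_\phi^2/4t)$ pulled back to $HM_\phi$, which gives precisely the claimed order $-m$ at $\textup{td}$ and is smooth at $\textup{fd}$, then iteratively solve the transport equations for the heat coefficients along geodesics normal to $\textup{td}$. After $N$ iterations this yields a parametrix $H_N^{\mathrm{td}}$ whose error at $\textup{td}$ vanishes to order $N$, and which is smooth in the interior of $HM_\phi$ away from $\textup{td}$.

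Next, $H_N^{\mathrm{td}}$ need not decay at the $\phi$-infinity faces $\textup{ff}, \textup{lf}, \textup{rf}$. Here I would compute the normal operator of $\partial_t + \Delta_k$ at $\textup{ff}$: from the definition of the $\phi$-vector fields, this is a heat-type operator on a model space fibering over $B$ with typical fibre $F \times \mathbb{R}^{b+1}_+$. Using the fibre-harmonic decomposition provided by Assumption \ref{assumption}(d), one splits this normal problem into a vertical fibrewise heat equation on $F$ and a horizontal Bessel-type heat operator on $\mathbb{R}^{b+1}_+$, each of which admits an explicit heat kernel. A partition of unity on $HM_\phi$ splices these model solutions into the parametrix, correcting its behaviour at $\textup{ff}$ (and analogously at $\textup{fd}$) without disturbing the structure at $\textup{td}$, since the faces are $p$-submanifolds meeting cleanly. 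Rapid decay at $\textup{tf}$ is then automatic from parabolic off-diagonal decay at $t=0$, and likewise at $\textup{lf},\textup{rf}$ once the $\textup{ff}$-model is correct.

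Finally, after these steps the error $R = (\partial_t + \Delta_k)H_N^{\mathrm{par}}$ is polyhomogeneous on $HM_\phi$ and vanishes to infinite order at every boundary hypersurface. The heat kernel is then obtained from
\[
e^{-t\Delta_k} = H_N^{\mathrm{par}} + H_N^{\mathrm{par}} \ast \sum_{j\geq 1}(-R)^{\ast j},
\]
with $\ast$ denoting time convolution. Convergence of the series and preservation of the polyhomogeneous structure on $HM_\phi$ follow from a triple-space pushforward argument, parallel to the composition theorem \cite[Theorem 8.1]{MohammadBorisDaniel}, which shows that convolutions in the heat calculus preserve polyhomogeneity and infinite-order vanishing at the designated faces. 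The main obstacle is constructing and gluing the $\textup{ff}$-model: the normal operator at $\textup{ff}$ mixes base and fibre directions through the fibre-harmonic projection, and one must carefully track the rescaled $\phi$-cotangent bundles and half-density conventions (as in \cite[Definition 2.4]{MohammadBoris}) so that the Euclidean model at $\textup{td}$, the fibrewise model at $\textup{fd}$, and the normal solution at $\textup{ff}$ match compatibly along the triple intersections $\textup{ff}\cap\textup{td}$ and $\textup{ff}\cap\textup{fd}$. Once this matching is verified, the Volterra iteration is routine.
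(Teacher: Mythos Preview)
The paper does not prove this theorem: it is quoted verbatim as \cite[Corollary 7.2]{MohammadBoris} and used as a black box. So there is no proof in the paper to compare against; your sketch is an attempt at reproducing the argument of the cited reference.

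Your outline follows the standard Melrose-style heat calculus construction and is broadly in the right spirit, but two points are off. First, Assumption~\ref{assumption}(d) (the spectral conditions on $D_B$ and compatibility with the fibre-harmonic projection) plays no role in the short-time heat kernel construction; it enters only in the low-energy resolvent analysis of \cite{MohammadBorisDaniel}. The short-time parametrix needs no such hypothesis. Second, your treatment of $\textup{ff}$ is misdirected: you describe solving a nontrivial model heat problem there, but the theorem asserts infinite-order vanishing at $\textup{ff}$ (as well as at $\textup{lf}, \textup{rf}, \textup{tf}$). This vanishing is not obtained by matching a model solution at $\textup{ff}$ --- it comes essentially for free from the Gaussian off-diagonal decay of the interior parametrix, once one has the correct models at $\textup{td}$ and $\textup{fd}$. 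The genuine work is (i) the Euclidean model at $\textup{td}$ giving order $-m$, (ii) the model at $\textup{fd}$, which is the product of the fibre heat kernel on $F$ with the Euclidean heat kernel in the $(x,y)$ directions (this is where the fibre structure enters, not via any harmonic splitting), and (iii) checking that these two models are compatible at $\textup{td}\cap\textup{fd}$. After that, the Volterra iteration via a heat triple-space pushforward is indeed routine, as you say, and is carried out in \cite{MohammadBoris} rather than in \cite{MohammadBorisDaniel}.
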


Note that in contrast to the previous section, $\beta^*_{t,\phi} e^{-t\Delta_k}$ was not studied
as a half-density, but simply as a section of the endomorphism bundle. Hence the following
is a simple consequence, restricting $\beta^*_{t,\phi} e^{-t\Delta_k}$ to the lifted diagonal in 
$HM_\phi$. Note that the lifted diagonal is simply $\overline{M} \times [0,\infty)_{t}$.

\begin{Cor} \label{cor:ShortHeatKernelPoly}
The pointwise heat kernel trace $\tr (e^{-t\Delta_k})$ is a polyhomogeneous function on 
the lifted diagonal $\overline{M} \times [0,\infty)_{t} \subset HM_\phi$, 
smooth at $\partial M$ and with uniform index set $-m + 2 \N$ \textup{(}
referring to powers of $\sqrt{t}$ \textup{)} in its asymptotics as $t \to 0$.
\end{Cor}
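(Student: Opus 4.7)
The corollary is essentially obtained by restricting the global statement of Theorem \ref{thm:ShortHeatKernelPoly} to the lifted diagonal, so the plan is to package that restriction carefully and to identify which boundary hypersurfaces of $HM_\phi$ the lifted diagonal actually meets. Taking the fibrewise endomorphism trace of $\beta^*_{t,\phi} e^{-t\Delta_k}$ is a smooth, fibrewise linear operation on the coefficient bundle $\beta^*_{t,\phi}((\Lambda^k_\phi M \otimes E) \boxtimes (\Lambda^k_\phi M \otimes E))$, and as such it preserves polyhomogeneity and all boundary index sets on $HM_\phi$. The question then reduces to: what is the intersection of the lifted temporal diagonal with the various boundary hypersurfaces of $HM_\phi$, and what do the corresponding index sets from Theorem \ref{thm:ShortHeatKernelPoly} force on the restriction?

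First I would analyse the geometry of the lifted diagonal inside $HM_\phi$. Away from $\{t=0\}$ and $\{x=0\}$ the diagonal sits in the interior, so no issues arise there. The blow-ups of $\partial M \times \partial M \times \R_+$, the lifted interior fibre diagonal, and the temporal diagonal $\{p=q,\,t=0\}$ that produce the faces $\textup{ff}$, $\textup{fd}$, $\textup{td}$ respectively are all performed along submanifolds which either do not meet the lifted diagonal or meet it transversally. A straightforward local coordinate check shows that the lifted diagonal only meets the hypersurfaces $\textup{fd}$ (over $\partial M$, $t>0$) and $\textup{td}$ (at $t=0$); it is disjoint from $\textup{ff}$, $\textup{tf}$, $\textup{lf}$, $\textup{rf}$. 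Identifying the resulting manifold with boundary with $\overline{M} \times [0,\infty)_t$ then follows from the projection formula.

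The index set statements now drop out from Theorem \ref{thm:ShortHeatKernelPoly}. Smoothness of $\beta^*_{t,\phi} e^{-t\Delta_k}$ at $\textup{fd}$ restricts to smoothness of the pointwise trace at $\partial M$, uniformly in $t>0$ on compact sets. At $\textup{td}$, Theorem \ref{thm:ShortHeatKernelPoly} gives an expansion of order $-m$ in a defining function of $\textup{td}$; to upgrade this to the \emph{uniform} index set $-m + 2\N$ in powers of $\sqrt{t}$, I would invoke the standard local Minakshisundaram--Pleijel parametrix, which on the diagonal furnishes a classical expansion
\[
\tr \bigl(e^{-t\Delta_k}\bigr)(p) \sim t^{-m/2} \sum_{j=0}^\infty a_j(p)\, t^j, \quad t \to 0,
\]
with smooth coefficients $a_j$ on $M$. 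Because the $\phi$-metric is smooth and uniformly of bounded geometry in the interior and because Theorem \ref{thm:ShortHeatKernelPoly} already guarantees polyhomogeneity up to $\partial M$, this interior expansion extends uniformly to $\overline{M}$, so the only exponents appearing in $\sqrt{t}$ are $-m, -m+2, -m+4, \dots$, with no half-integer or logarithmic terms.

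The main (mild) obstacle is ruling out extra terms at $\textup{td}$ that Theorem \ref{thm:ShortHeatKernelPoly} a priori allows, in particular logarithmic contributions and powers not lying in $-m+2\N$. This is handled by the classical local parametrix argument above together with the absence of interaction between $\textup{td}$ and the other faces along the diagonal, i.e.\ the product-type structure of the front face $\textup{td}$ near its interior. Everything else in the corollary is a direct restriction of polyhomogeneous data from $HM_\phi$ to a $p$-submanifold of it.
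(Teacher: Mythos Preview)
Your proposal is correct and follows essentially the same approach as the paper: restrict the polyhomogeneity of Theorem \ref{thm:ShortHeatKernelPoly} to the lifted diagonal, observe that only $\textup{fd}$ and $\textup{td}$ are met, and refine the $\textup{td}$ index set to $-m+2\N$ via the classical local Minakshisundaram--Pleijel expansion. The paper's own proof is considerably terser (it simply calls this ``an obvious consequence'' plus the ``classic observation'' about vanishing of odd-order terms), but the substance is identical.
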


\begin{proof}
This is an obvious consequence of Theorem \ref{thm:ShortHeatKernelPoly}. 
The only additional statement is the precise structure of the index set in the asymptotics as $t \to 0$.
This follows by the classic observation that in the interior of any smooth manifold, 
the terms $t^{-\frac{m+j}{2}}$ in the pointwise heat kernel asymptotics as $t\to 0$
vanish unless $j \in \N$ is even. 
\end{proof}

\section{Asymptotics of the renormalized heat trace on $\phi$-manifolds}

This section is devoted to deriving the asymptotic behaviour of the renormalized trace of
$e^{-t\Delta_k}$ for small times $t \in (0,1)$ from Corollary \ref{cor:ShortHeatKernelPoly}
and for large times $t\in [1,\infty)$ from the low energy behaviour
of the resolvent in Theorem \ref{thm:LongHeatKernelPoly}. Recall that, in the notation of
Theorem \ref{thm:LongHeatKernelPoly}, we now identify the pointwise trace 
$\tr(e^{-t\Delta_k}) = F' \cdot d\Vol_{\phi}$ with the 
scalar function $F'$.

\begin{Prop}\label{T}
Consider the boundary collar $[\, 0,1) \times \partial M \subset \overline{M}$ and abbreviate
$\overline{M}(\varepsilon) := \overline{M} \backslash [0,\varepsilon) \times \partial M$.
Consider the partially integrated heat trace 
$$
\mathscr{T}(t, \varepsilon):= \int_{\overline{M}(\varepsilon)} \tr (e^{-t\Delta_k}) d\Vol_{\phi}.
$$
\begin{enumerate}
\item $\mathscr{T}$ lifts to a polyhomogeneous function on the parabolic\footnote{i.e.
we treat $1/ \sqrt{t}$ as a smooth variable.} blowup space (cf. Fig. \ref{R2blowup}) 
$$
(\R_+^2)_b:= \bigl[[0,\infty)_{1/\sqrt{t}} \times 
[0,\infty)_{\varepsilon}, \{t=\infty, \varepsilon = 0\}\bigr],
$$
of order $(-b-1)$ at the left boundary face $\textup{lf}$, and order $0$ at other boundary faces.
\item $\mathscr{T}$ is polyhomogeneous on $[0,\infty)_{\sqrt{t}} \times [0,\infty)_{\varepsilon}$, 
smooth at $\varepsilon = 0$ and with the index set $-m + 2 \N$
\textup{(} referring to powers of $\sqrt{t}$ \textup{)} in its asymptotics as $t \to 0$.
\end{enumerate}
\end{Prop}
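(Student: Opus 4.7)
The plan is to prove Proposition \ref{T} by treating the two regimes $t\to 0$ and $t\to \infty$ separately: for (2) I would exploit Corollary \ref{cor:ShortHeatKernelPoly} directly, while for (1) I would combine Theorem \ref{thm:LongHeatKernelPoly} with a b-pushforward argument in the style of Melrose.

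For part (2), the starting point is the scalar function $F'$ of Theorem \ref{thm:LongHeatKernelPoly}, identified with the pointwise trace $\tr(e^{-t\Delta_k})$: on the lifted diagonal $\overline{M}\times [0,\infty)_t \subset HM_\phi$, Corollary \ref{cor:ShortHeatKernelPoly} tells us that $F'$ is polyhomogeneous, smooth at $\partial M$, with uniform index set $-m+2\N$ in $\sqrt{t}$. Since $\overline{M}(\varepsilon)$ is cut from $\overline{M}$ along $\{x=\varepsilon\}$ in the boundary collar, its characteristic function is a smooth cutoff in $\varepsilon$, and smoothness of $F'$ at $\partial M$ guarantees that integration against $d\Vol_\phi$ restricted to $\overline{M}(\varepsilon)$ depends smoothly on $\varepsilon$ near $\varepsilon = 0$. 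Fubini and differentiation under the integral sign then show that $\mathscr{T}(t,\varepsilon)$ is polyhomogeneous on $[0,\infty)_{\sqrt{t}}\times [0,\infty)_\varepsilon$, smooth in $\varepsilon$ down to $\varepsilon=0$, and inherits the index set $-m+2\N$ in $\sqrt{t}$ as $t\to 0$.

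For part (1) I would build a triple space $Z$ by taking $\textup{diag}_{\kappa,\phi}\times [0,\infty)_\varepsilon$ and blowing up the joint corner where the collar coordinate $x$ on $\overline{M}$ and $\varepsilon$ both vanish, together with any further blowup needed to resolve the $\phi \textup{f}_0$ geometry against the cutoff $\{x=\varepsilon\}$. The construction should be arranged so that: (i) the region of integration $\{x\geq \varepsilon\}$ lifts to a smooth p-submanifold with corners; (ii) the polyhomogeneous description of $F'\cdot d\Vol_\phi$ from Theorem \ref{thm:LongHeatKernelPoly} lifts cleanly to $Z$; and (iii) the projection $\pi\colon Z\to (\R_+^2)_b$ obtained by integrating out the $\overline{M}$-variable is a b-fibration. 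Melrose's pushforward theorem then yields polyhomogeneity of $\mathscr{T}$ on $(\R_+^2)_b$, with index sets at each face obtained by combining the index sets of Theorem \ref{thm:LongHeatKernelPoly} with the shift from the $x^{-(b+2)}$ behaviour of $d\Vol_\phi$ at $\partial M$.

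I expect the main obstacle to be the construction of $Z$ and the verification that $\pi$ is a b-fibration compatible with both the resolvent blowup and the cutoff locus; this is iterated blowup bookkeeping in the spirit of \cite[§6, §7]{MohammadBorisDaniel}, and is the technical heart of the statement. Once it is in place, the orders at the faces of $(\R_+^2)_b$ follow mechanically from Melrose's pushforward formula: the order $(-b-1)$ at $\textup{lf}$ comes from integrating the $O(1)$ behaviour of $F'$ at $\textup{sc}$ against $\int_\varepsilon^1 x^{-(b+2)}\,dx$; the order $0$ at the new front face comes from the $\rho_{\phi \textup{f}_0}^{b+1}$-vanishing of $F'$ at $\phi \textup{f}_0$ exactly cancelling the $\varepsilon^{-(b+1)}$ divergence from $d\Vol_\phi$; and the order $0$ at the remaining face $\{1/\sqrt{t}=0,\, \varepsilon>0\}$ follows from the $O(1)$ behaviour of $F'$ at $\textup{zf}$ together with the finite $\phi$-volume of $\overline{M}(\varepsilon)$ for $\varepsilon>0$.
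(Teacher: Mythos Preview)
Your proposal is correct and follows essentially the same route as the paper. For part (2) both you and the paper simply invoke Corollary \ref{cor:ShortHeatKernelPoly}; for part (1) the paper packages your b-pushforward argument into a standalone auxiliary result (Proposition \ref{auxiliary-pushforward} in Appendix \ref{appendix-integral}), which treats the integral $\int_{x_3}^\infty u(x_1,x_2)\,\tfrac{dx_2}{x_2}$ via the standard b-triple space and Melrose's pushforward theorem, and then applies it with $x_1 = 1/\sqrt{t}$, $x_2 = x$, $x_3 = \varepsilon$ and $d\Vol_\phi = x^{-(b+1)}\nu_b$ for a b-density $\nu_b$; your index-set bookkeeping at $\textup{lf}$, $\textup{ff}$ and $\textup{rf}$ matches the paper's verbatim.

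One remark that may simplify your anticipated ``main obstacle'': the lifted diagonal $\textup{diag}_{\kappa,\phi}$ is already just the ordinary b-blowup $[\,\overline{M}\times [0,\infty)_\kappa;\ \partial M\times\{0\}\,]$, so no genuinely $\phi$-specific geometry remains at this stage. Your triple space $Z$ is therefore nothing more exotic than the standard b-triple space in the three scalar variables $(\kappa,x,\varepsilon)$ (with $\overline{M}$ carried along as a parameter), and the b-fibration property of the projection to $(\R_+^2)_b$ is the classical one; there is no need for further blowups ``to resolve the $\phi\textup{f}_0$ geometry against the cutoff''.
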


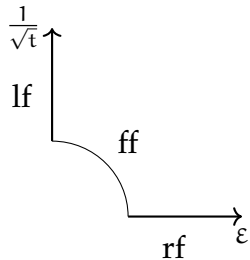
\begin{figure}[h]
\centering
\begin{tikzpicture}
\node at (1,1) {$\textup{ff}$};
\node at (-0.4,1.6) {$\textup{lf}$};
\node at (1.6,-0.4) {$\textup{rf}$};
\draw[thick,->] (1,0) -- (2.5,0) node[anchor=north] {$\varepsilon$};;
\draw[thick,->] (0,1) -- (0,2.5) node[anchor=east] {$\frac{1}{\sqrt{t}}$};;
\draw (1,0) arc (0:90:1cm);
\end{tikzpicture}
\caption{The blowup space $(\R_+^2)_b$.}
\label{R2blowup}
\end{figure}

\begin{proof} We apply Proposition \ref{auxiliary-pushforward},
where the argument remains unchanged if $[0,\infty)_{x_1}$ is replaced by 
$[0,\infty)_{1/\sqrt{t}}$ and $[0,\infty)_{x_2}$ is replaced by $\overline{M}$.
Note that 
$$
 \tr (e^{-t\Delta_k}) d\Vol_{\phi} =  x^{-(b+1)} \tr (e^{-t\Delta_k}) \nu_b,
$$
where $\nu_b$ is a $b$-density on $\overline{M}$, i.e. a smooth measure on 
$\overline{M}$, divided by the boundary defining function $x$ (which extends the $x$-coordinate of a collar neighbourhood). The factor $x^{-(b+1)}$ shifts the $\phi \textup{f}_0$ 
and $\textup{sc}$ index sets of $\tr (e^{-t\Delta_k})$
in Theorem \ref{thm:LongHeatKernelPoly} by $-(b+1)$. Hence by 
Proposition \ref{auxiliary-pushforward} $\mathscr{T}$ lifts to $(\R_+^2)_b$ with the index sets 
$$
\mathcal{E}'_{\textup{sc}} - (b+1) \geq -(b+1), \quad
\mathcal{E}'_{\textup{zf}}\geq 0, \quad
\Bigl( \mathcal{E}'_{\phi \textup{f}_0} -(b+1)\Bigr) \, \overline{\cup} \, \mathcal{E}'_{\textup{zf}}  \geq 0,
$$
at $\textup{lf}$, $\textup{rf}$ and the front face $\textup{ff}$, respectively. 
This proves the first statement. The second statement is a direct consequence of 
Corollary \ref{cor:ShortHeatKernelPoly}. 
\end{proof}

\subsection{The regularized heat trace and its asymptotics}\label{Section:RegularizedHeat}

We introduce the regularized limit $\LIM_{\varepsilon \to 0}$ of a function with an 
asymptotic expansion as $\varepsilon \to 0$ and the regularized integral (if well-defined) as 
$$
\dashint_M := \LIM_{\varepsilon \to 0} \int_{\overline{M}(\varepsilon)}.
$$
Using such notation, we can now define the regularized trace.

\begin{Def}\label{reg-trace-def} The regularized heat trace $\Tr^R (e^{-t\Delta_k})$ is the constant term 
in the asymptotic expansion of $\mathscr{T}(t, \varepsilon)$ as $\varepsilon \to 0$, denoted in the notation of Proposition \ref{T} by
$$
\Tr^R (e^{-t\Delta_k}) := \LIM_{\varepsilon \to 0} \mathscr{T}(t, \varepsilon)
\equiv \dashint_M \tr (e^{-t\Delta_k}) d\Vol_{\phi}.
$$
\end{Def}

The following observation is fairly obvious.
\begin{Lem}
\label{Lem:Cutoff}
The regularized heat trace is independent of choice of boundary defining function extending the coordinate function $x$.
\end{Lem}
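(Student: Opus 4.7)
The plan is to reduce the statement to the elementary observation that the dependence of $\mathscr{T}(t,\varepsilon)$ on the choice of boundary defining function enters only through the cutoff region $\overline{M}(\varepsilon)=\overline{M}\setminus\{x<\varepsilon\}$, since the integrand $\tr(e^{-t\Delta_k})\,d\Vol_\phi$ is determined intrinsically by the Riemannian manifold $(M,g_\phi)$ and the flat bundle $(E,h)$, independent of any auxiliary extension of $x$ off the collar.

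I would then take two smooth boundary defining functions $x_1,x_2$ on $\overline{M}$, each of which restricts to the coordinate $x$ on the collar $\cU=(0,1)\times \partial M$. Since $\overline{M}\setminus \cU$ is closed and both $x_1$ and $x_2$ are bounded below on it by some $\delta>0$, the cutoff hypersurfaces satisfy
\[\{x_1<\varepsilon\}=\{x_2<\varepsilon\}=[0,\varepsilon)\times \partial M \quad \text{for all } \varepsilon<\delta.\]
Consequently $\mathscr{T}(t,\varepsilon)$ is literally the same function of $\varepsilon$ on $(0,\delta)$ for the two choices, so the polyhomogeneous expansions at $\varepsilon=0$ guaranteed by Proposition \ref{T} coincide termwise; in particular their constant terms $\LIM_{\varepsilon\to 0}\mathscr{T}(t,\varepsilon)$ agree, which is the assertion of the lemma.

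There is essentially no obstacle once one reads ``extending the coordinate function $x$'' as literal agreement on some collar of $\partial M$; this is presumably why the authors label the lemma ``fairly obvious''. The only point worth flagging is the alternative reading in which two candidate functions are allowed to merely share the same jet along $\partial M$. Under that weaker hypothesis, one would instead have to invoke Theorem \ref{thm:LongHeatKernelPoly} together with a pushforward argument in the spirit of Proposition \ref{auxiliary-pushforward}, applied to the integral over the symmetric difference $\{x_1<\varepsilon\}\triangle\{x_2<\varepsilon\}$, and verify that the resulting polyhomogeneous expansion in $\varepsilon$ has vanishing constant term. No such analysis is required under the natural interpretation.
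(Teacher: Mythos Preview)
Your proposal is correct and matches the paper's intended argument: the paper does not actually supply a proof but simply labels the lemma ``fairly obvious,'' and your observation---that for $\varepsilon$ below the positive minimum of either extension on the compact set $\overline{M}\setminus([0,1)\times\partial M)$ the cutoff regions $\{x_i<\varepsilon\}$ coincide with $[0,\varepsilon)\times\partial M$---is precisely the content of that obviousness. Your remark about the alternative ``same-jet'' reading is a reasonable aside but, as you note, unnecessary here.
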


\begin{Prop} \label{Lem:RegTracePoly}
The regularized heat trace satisfies \textup{(} recall $m= \dim M, b= \dim B$ \textup{)}
\begin{equation}
\begin{split}
&\Tr^R (e^{-t\Delta_k}) \sim \sum_{j = 0}^\infty a_j t^{-\frac{m}{2} + j}, \ \textup{as} \ t \to 0, \\
&\Tr^R (e^{-t\Delta_k}) \sim \sum_{j = 0}^\infty \sum_{\ell = 0}^{\ell_j} b_{j, \ell} t^{-\frac{j}{2}} \log^\ell(t), \ \textup{as} \ t \to \infty.
\end{split}
\end{equation}
for some coefficients $a_j, b_{j, \ell} \in \R$ and $\ell_j \in \N$.
\end{Prop}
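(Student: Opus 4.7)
Since $\Tr^R(e^{-t\Delta_k}) = \LIM_{\varepsilon\to 0}\mathscr{T}(t,\varepsilon)$, the statement reduces to reading off the asymptotics from the two parts of Proposition \ref{T}, after commuting the regularized limit with the polyhomogeneous structure established there. I treat small and large $t$ separately.

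\emph{Small $t$.} By part (2) of Proposition \ref{T}, $\mathscr{T}$ is polyhomogeneous on $[0,\infty)_{\sqrt t}\times [0,\infty)_\varepsilon$, smooth at $\varepsilon=0$, with uniform index set $-m+2\N$ in $\sqrt t$. Smoothness at $\varepsilon=0$ forces the $\varepsilon$-expansion to be an ordinary Taylor series, with neither logarithms nor negative powers of $\varepsilon$, so $\LIM_{\varepsilon\to 0}$ reduces to evaluation at $\varepsilon=0$. The resulting function of $t$ inherits the index set $-m+2\N$ in $\sqrt t$, producing the claimed $t\to 0$ expansion $\sum_{j\ge 0}a_j\,t^{-m/2+j}$.

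\emph{Large $t$.} From part (1) of Proposition \ref{T}, $\mathscr{T}$ lifts polyhomogeneously to the parabolic blowup $(\R_+^2)_b$ with orders $-(b+1), 0, 0$ at $\textup{lf}, \textup{rf}, \textup{ff}$ respectively. The operation $\LIM_{\varepsilon\to 0}$ is a pushforward along the $\varepsilon$-fibres that extracts the coefficient of $\varepsilon^0 (\log\varepsilon)^0$ in the expansions at $\textup{rf}$ (away from the corner) and at $\textup{ff}$ (near the corner). Applying a pushforward lemma for polyhomogeneous functions in the spirit of Proposition \ref{auxiliary-pushforward} yields a polyhomogeneous function of $t$ on $[0,\infty)_{1/\sqrt t}$. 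Its expansion at $t=\infty$ inherits powers $t^{-j/2}$ with $j\ge 0$ from the order-$0$ index set at $\textup{ff}$, together with finitely many logarithmic corrections $\log^\ell t$ arising from extended unions $\overline{\cup}$ whenever the index sets at $\textup{ff}$ and $\textup{lf}$ share an exponent.

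\emph{Main obstacle.} The only non-routine step is ruling out positive powers of $t$ after regularization: a priori, the $-(b+1)$ bound at $\textup{lf}$ permits contributions of order $t^{(b+1)/2}$, yet these would contradict the boundedness of $\mathscr{T}(t,\varepsilon)$ in $t$ for fixed $\varepsilon>0$. The resolution is that the joint polyhomogeneous structure on the blowup forces any coefficient of a negative power of $1/\sqrt t$ at $\textup{lf}$ to carry a strictly positive power of $\rho_{\textup{rf}}=\varepsilon$, inherited through the pushforward from the $\phi\textup{f}_0$ face of the resolvent blowup in Theorem \ref{thm:LongHeatKernelPoly}; such coefficients are therefore annihilated by $\LIM_{\varepsilon\to 0}$. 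Verifying this cancellation amounts to a careful but purely combinatorial bookkeeping of the index sets through the pushforward, and once it is done the two expansions are complete.
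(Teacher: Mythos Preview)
Your small-$t$ argument is correct and matches the paper's.

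For large $t$ you have the faces swapped, and this is what generates the phantom ``main obstacle''. In Figure~\ref{R2blowup} the vertical axis is $1/\sqrt t$ and the horizontal axis is $\varepsilon$; hence $\textup{lf}$ is the lift of $\{\varepsilon=0\}$ and $\textup{rf}$ is the lift of $\{1/\sqrt t=0\}$, not the other way round. In particular your identification $\rho_{\textup{rf}}=\varepsilon$ is wrong. The order $-(b+1)$ at $\textup{lf}$ simply records the expected $\varepsilon^{-(b+1)}$ divergence of the truncated volume integral for fixed finite $t$; the regularization $\LIM_{\varepsilon\to 0}$ discards exactly these terms by definition. No $t^{(b+1)/2}$ growth is ever in play, and the cancellation argument you outline is addressing a problem that does not exist.

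With the faces correctly identified, the large-$t$ argument is shorter than you propose and requires no pushforward machinery. For fixed finite $t$, sending $\varepsilon\to 0$ means approaching $\textup{lf}$; letting $t\to\infty$ then slides along $\textup{lf}$ towards the corner $\textup{lf}\cap\textup{ff}$. The paper works there in projective coordinates $\tau=1/\sqrt t$ (defining $\textup{ff}$) and $s=\varepsilon/\tau$ (defining $\textup{lf}$), expands $\mathscr{T}\sim\sum_{i\ge -(b+1)}\sum_j c_{i,j}(\tau)\,s^i\log^j s$ with each $c_{i,j}$ polyhomogeneous in $\tau$ of order $\ge 0$, and observes that the coefficient of $\varepsilon^0(\log\varepsilon)^0$ involves only the terms with $i=0$. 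The regularized trace therefore inherits the $\textup{ff}$ index set, which is $\ge 0$ in $\tau=1/\sqrt t$, and this is precisely the claimed expansion. Note also that $\LIM_{\varepsilon\to 0}$ is coefficient extraction, not integration, so Proposition~\ref{auxiliary-pushforward} is not the relevant tool; the projective-coordinate reading is both simpler and more direct.
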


\begin{proof}
The first asymptotics (as $t\to 0$) is obtained from polyhomogeneity of $\mathscr{T}$ on 
$[0,\infty)_{t} \times [0,\infty)_{\varepsilon}$ as follows: $\LIM_{\varepsilon \to 0} \mathscr{T}(t, \varepsilon)$
picks the constant term in the asymptotics of $\mathscr{T}(t, \varepsilon)$ as $\varepsilon \to 0$. By polyhomogeneity,
this term admits an asymptotic expansion as $t \to 0$
with the index set $-m + 2 \N$ \textup{(}referring to powers of $\sqrt{t}$ \textup{)} 
as asserted in Proposition \ref{T}. \medskip

For the second asymptotics (as $t \to \infty$), we use that by Proposition \ref{T}, $\mathscr{T}$ lifts to 
a polyhomogeneous function on the blowup space $(\R_+^2)_b$. Near the intersection between the front face 
$\textup{ff}$ and $\textup{lf}$, i.e. the upper corner in Figure \ref{R2blowup},
we can use projective coordinates $\tau = 1/ \sqrt{t}$
and $s = \varepsilon / \tau$. Here, $\tau$ is the defining function of $\textup{ff}$ and $s$ the defining function of
$\textup{lf}$. The lift of $\mathscr{T}$ is of order $(-b-1)$ at $\textup{lf}$ and of leading order $0$ at $\textup{ff}$, hence
we have the following expansion
$$
\mathscr{T}(t, \varepsilon) \sim \sum_{i = -(b+1)}^\infty \sum_{j = 0}^{j_i} c_{i,j} (\tau)
s^{i} \log^j(s), \ \textup{as} \ s \to 0,
$$
for some $j_i \in \N$, where each coefficient has an asymptotic expansion 
$$
c_{i,j} (\tau) \sim \sum_{\ell = 0}^\infty \sum_{p = 0}^{p_\ell} d^{i,j}_{\ell, p}
\tau^{i} \log^j(\tau), \ \textup{as} \ \tau \to 0.
$$
In particular, we conclude for the renormalized heat trace
$$
\Tr^R (e^{-t\Delta_k}) \equiv c_{0,0} (\tau) \sim 
\sum_{\ell = 0}^\infty \sum_{p = 0}^{p_\ell} d^{0,0}_{\ell, p}
\tau^{i} \log^j(\tau), \ \textup{as} \ \tau \to 0.
$$
This is precisely the claim. 
\end{proof}

\subsection{Different regularizations}\label{finite-part-sec}

We can rewrite the partially integrated heat trace as follows.
Recall the notation $\overline{M}(\varepsilon) := \overline{M} \backslash [0,\varepsilon) \times \partial M$
from Proposition \ref{T}. Then, noting that $d\Vol_{\phi} = x^{-2-b} dx \dVol_{\partial_M}(x)$, we obtain
$$
\mathscr{T}(t, \varepsilon) = \int_{\overline{M}(1)} \tr (e^{-t\Delta_k}) d\Vol_{\phi} +  
\int_{\varepsilon}^1 x^{-2-b} \left( \int_{\partial M} \tr (e^{-t\Delta_k}) \dVol_{\partial_M}(x) \right) dx.
$$
Let us write $e^{-t\Delta_k}_{\sigma}:= x^{\sigma} e^{-t\Delta_k}$ for any $\sigma \in \C$. 
Here $x$ denotes the multiplication operator, multiplying by the boundary defining function (extending the collar neighbourhood coordinate $x$)
For $\textup{Re}(\sigma)\gg 0$ we find that 
\begin{align*}
\mathscr{T}(t, \varepsilon, \sigma) &:= \int_{\overline{M}(1)} \tr (e^{-t\Delta_k}) d\Vol_{\phi} + 
\int_0^1 x^{-2-b} \left( \int_{\partial M} \tr (e^{-t\Delta_k}_{\sigma}) \dVol_{\partial_M}(x) \right) dx \\
&= \int_{\overline{M}(1)} \tr (e^{-t\Delta_k}) d\Vol_{\phi} + 
\int_0^1 x^{-2-b+\sigma} \left( \int_{\partial M} \tr (e^{-t\Delta_k}) \dVol_{\partial_M}(x) \right) dx < \infty.
\end{align*}
In fact, due to the asymptotics in Proposition \ref{T}, $\mathscr{T}(t, \varepsilon, \sigma)$ extends to a meromorphic
function in $\sigma$ on the whole of $\C$. It is a well-known fact, see for instance \cite[Proposition 2.1.7]{LeschBook},
that the renormalized trace equals the finite part of $\mathscr{T}(t, \varepsilon, \sigma)$ at $\sigma = 0$, namely
\begin{equation}
\Tr^R (e^{-t\Delta_k}) = \LIM_{\varepsilon \to 0} \mathscr{T}(t, \varepsilon) = \underset{\sigma = 0}{\FP} \ \mathscr{T}(t, \varepsilon, \sigma).
\end{equation}
This second regularisation will be convenient for later arguments.

\section{Renormalized analytic torsion on $\phi$-manifolds}

We now have everything in place to define the renormalized analytic torsion of
$(M,g_\phi)$. We begin with a standard consequence of 
Proposition \ref{Lem:RegTracePoly}.
 
\begin{Lem}
\label{Lem:zetaDef}
We define the integrals
\begin{equation}
\begin{split}
I_0^{(k)}(s):= \int_0^1 t^{s-1} \Tr^R \left( e^{-t\Delta_k} \right) \, dt, \quad \Re(s) \gg 0, \\
I_\infty^{(k)}(s):= \int_1^\infty t^{s-1}\Tr^R \left( e^{-t\Delta_k} \right) \, dt, \quad \Re(s) \ll 0.
\end{split}
\end{equation}
Then both integrals converge and admit meromorphic extensions to $\C$. 
\end{Lem}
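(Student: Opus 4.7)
The plan is to carry out the standard Mellin-transform extraction, using the asymptotic expansions provided by Proposition \ref{Lem:RegTracePoly} as the sole analytic input.

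For $I_0^{(k)}(s)$, I would argue as follows. The expansion $\Tr^R(e^{-t\Delta_k}) \sim \sum_{j \geq 0} a_j\, t^{-m/2 + j}$ as $t \to 0$ shows $t^{s-1} \Tr^R(e^{-t\Delta_k}) = O(t^{\Re(s) - 1 - m/2})$, so the integral converges absolutely for $\Re(s) > m/2$. To extend meromorphically, fix $N \in \N$ and write
\begin{equation*}
\Tr^R\bigl(e^{-t\Delta_k}\bigr) = \sum_{j=0}^{N} a_j\, t^{-m/2 + j} + R_N(t), \qquad R_N(t) = O\bigl(t^{-m/2 + N + 1}\bigr) \text{ as } t \to 0.
\end{equation*}
Splitting the integral and computing the explicit part,
\begin{equation*}
I_0^{(k)}(s) = \sum_{j=0}^{N} \frac{a_j}{s - m/2 + j} + \int_0^1 t^{s-1} R_N(t)\, dt,
\end{equation*}
where the remainder integral is holomorphic for $\Re(s) > m/2 - N - 1$. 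Letting $N \to \infty$ yields a meromorphic continuation of $I_0^{(k)}$ to all of $\C$ with at worst simple poles at the points $s = m/2 - j$, $j \in \N_0$.

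For $I_\infty^{(k)}(s)$, the large-$t$ expansion of Proposition \ref{Lem:RegTracePoly} gives $\Tr^R(e^{-t\Delta_k}) = O(\log^{\ell_0}(t))$ as $t \to \infty$, so $I_\infty^{(k)}(s)$ converges absolutely for $\Re(s) < 0$. For the continuation, fix $N$ and write
\begin{equation*}
\Tr^R\bigl(e^{-t\Delta_k}\bigr) = \sum_{j=0}^{N} \sum_{\ell = 0}^{\ell_j} b_{j,\ell}\, t^{-j/2} \log^\ell(t) + S_N(t), \qquad S_N(t) = O\bigl(t^{-(N+1)/2} \log^{L_N}(t)\bigr).
\end{equation*}
A direct computation (substitute $u = \log t$) yields, for $\Re(s) < j/2$,
\begin{equation*}
\int_1^\infty t^{s - 1 - j/2} \log^\ell(t)\, dt = \frac{(-1)^{\ell+1}\, \ell!}{(s - j/2)^{\ell + 1}},
\end{equation*}
each a meromorphic function on $\C$ with a single pole at $s = j/2$ of order $\ell + 1$. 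The integral of $S_N$ is holomorphic for $\Re(s) < (N+1)/2$, and letting $N \to \infty$ delivers the meromorphic continuation of $I_\infty^{(k)}$ to $\C$, with poles at the half-integers $s = j/2$, $j \in \N_0$, of orders bounded by $\ell_j + 1$.

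No real obstacle arises: the argument is essentially bookkeeping, and Proposition \ref{Lem:RegTracePoly} supplies exactly the asymptotic data needed. The only point requiring a small amount of care is the presence of the logarithmic powers in the expansion at $t \to \infty$, which must be tracked to conclude that the poles of $I_\infty^{(k)}$ at $s = j/2$ are of order at most $\ell_j + 1$ rather than simple; this is why one integrates $t^{s - 1 - j/2}\log^\ell(t)$ explicitly term-by-term instead of merely bounding.
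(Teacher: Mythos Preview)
Your argument is correct and is precisely the standard Mellin-transform bookkeeping that the paper has in mind; the paper itself gives no proof, merely labeling the lemma ``a standard consequence of Proposition~\ref{Lem:RegTracePoly}''. Your write-up makes this explicit with the correct pole locations and orders, so nothing further is needed.
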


We can now define a meromorphic function 
\begin{align}\label{zeta-formula}
\zeta(s,\Delta_k):= \frac{1}{\Gamma(s)}\left(I_0^{(k)}(s)+I_{\infty}^{(k)}(s)\right).
\end{align}
We define $\zeta_{\textup{reg}}(s,\Delta_k)$ to be the regular part of $\zeta$ near $s=0$. 

\begin{Rem}
If one can show that the $\log$-terms are absent in the two expansion of 
Proposition \ref{Lem:RegTracePoly}, then $I_0+I_\infty$ has at most a simple pole at 
$s=0$ which is cancelled by $\Gamma(s)$. This is the case we can set 
$\zeta_{\textup{reg}}(s,\Delta_k) := \zeta(s,\Delta_k)$.
\end{Rem}

\begin{Def}\label{torsion-def}
The renormalized analytic torsion of $(M,g_\phi)$ is defined by
\begin{equation}
\log T(M,E;g_\phi):= \frac{1}{2}\sum_{k=0}^{\dim M} (-1)^k\cdot k\cdot \zeta'_{\textup{reg}}(s=0,\Delta_k).
\label{eq:LogTDef}
\end{equation}
The renormalized analytic torsion norm is defined by 
$\| \cdot \|^{\textup{RS}}_{(M,E,g_\phi)} := T(M,E;g_\phi) \| \cdot \|_{L^2}$, where $ \| \cdot \|_{L^2} $ is the norm 
on $\det H^*_{(2)} (M,E)$, induced by $g_\phi$ on harmonic forms. \medskip

Let $(M,g_\phi)$ have two boundary components
$\partial M_1 \sqcup \partial M_2$, where $\partial M_1$ is the total space of a fibration and $g_\phi$ has the structure
of a $\phi$-metric in an open neighbourhood of $\partial M_1$. Let $\partial M_2$ be a regular boundary. Then, 
posing relative or absolute boundary conditions at $\partial M_2$, we may define the corresponding torsion norms
$$
\| \cdot \|^{\textup{RS}}_{(M,\partial M_2, E, g_\phi)}, \quad \| \cdot \|^{\textup{RS}}_{(M,E,g_\phi)}.
$$
\end{Def}

\section{Renormalized analytic torsion under metric perturbations}\label{Section:Variation}

We now turn to the invariance properties of the renormalized analytic torsion. 
Let $g_\phi(\gamma)$ denote a smooth family of $\phi$-metrics with 
a parameter $\gamma \in \R$, viewed as a variation of $g_\phi \equiv g_\phi(0)$. We shall also 
abbreviate $\delta =\partial_\gamma \restriction_{\gamma=0}$.
We will need to assume that the variation of the metric decays sufficiently fast near the boundary.

\begin{Assump}
The smooth family $g_\phi(\gamma), \gamma \in \R$ of $\phi$-metrics is of the form 
$$
g_\phi(\gamma) = g_\phi + h(\gamma),
$$
where the higher order terms $h(\gamma)$ vanish sufficiently fast at the boundary, 
namely $\vert h(\gamma)\vert_{g_\phi} =\mathcal{O}(x^{b+1+\alpha})$ as $x\to 0$
in a boundary collar neighborhood $\cU = (0,1)_x \times \partial M$, for some $\alpha > 0$ and $b = \dim B$. 
\label{Assumption:VarTraceClass}
\end{Assump}

We write $N$ for the degree
operator, acting as multiplication by $k$ on differential forms of degree $k$. 
We write 
$e^{-t\Delta_k}_{\sigma}:= x^{\sigma} e^{-t\Delta_k}$ for the modified heat operator . This is trace class in $L^2(\Lambda^k T^*M, g_\phi)$
for $\Re(\sigma) \gg 0$ sufficiently large.  We write $\Tr$ for the trace and, using the conventions above, 
abbreviate (in the second relation we assume 
$\Re(\sigma) \gg 0$ to ensure the trace class property)
\begin{equation}\label{TT}
\begin{split}
\Tr^R \Bigl((-1)^N Ne ^{-t\Delta}\Bigr) &=
\sum_{k=0}^m (-1)^k \cdot k\cdot \Tr^R \left(e^{-t\Delta_k}\right), \\
\Tr \Bigl((-1)^N Ne_{\sigma}^{-t\Delta}\Bigr) &=
\sum_{k=0}^m (-1)^k \cdot k\cdot \Tr \left(x^{\sigma} e^{-t\Delta_k}\right).
\end{split}
\end{equation}

\begin{Prop}\label{delta-prop} 
Let $*(\gamma)$ denote the family of Hodge star operators of $(M,g_\phi(\gamma))$. 
 We shall write $*$ for the Hodge star operator of $g_\phi \equiv g_\phi(0)$ and set 
\[
L:= *^{-1} \delta(*).
\]
Under the Assumption \ref{Assumption:VarTraceClass}, $L$ is a trace class operator in
in $L^2(\Lambda_\phi^* M, g_\phi)$ and we obtain, using the short notation 
as in \eqref{TT}
\begin{equation}
\begin{split}
\delta \Tr^R \Bigl((-1)^N Ne^{-t\Delta}\Bigr) =
- t \frac{d}{dt} \Tr \left( (-1)^N L e^{-t\Delta}\right).
\end{split}
\end{equation}
Note how the right hand side is the usual trace. 
\end{Prop}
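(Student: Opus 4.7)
The plan is to follow the classical Ray-Singer variation argument while carefully handling the renormalization. First, under Assumption \ref{Assumption:VarTraceClass}, I would verify that $L = *^{-1}\delta(*)$ is a fibre endomorphism of $\Lambda^*_\phi M\otimes E$ whose pointwise $g_\phi$-fibre norm is $\mathcal{O}(x^{b+1+\alpha})$ as $x\to 0$, since the Hodge star varies to leading order linearly in $h$ and $\lvert h\rvert_{g_\phi} = \mathcal{O}(x^{b+1+\alpha})$ by assumption. Combined with the boundedness of the pointwise heat trace $\tr(e^{-t\Delta}(p,p))$ from Corollary \ref{cor:ShortHeatKernelPoly} and Theorem \ref{thm:LongHeatKernelPoly}, and the asymptotics $d\Vol_{\phi} \sim x^{-b-2}\,dx\,d\Vol_{\partial M}$, the integrand of $\Tr((-1)^N L e^{-t\Delta})$ behaves like $x^{\alpha-1}$ as $x\to 0$, which is integrable since $\alpha > 0$. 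Hence $L e^{-t\Delta}$ is honestly trace class on $L^2(\Lambda^*_\phi M, g_\phi)$ and no renormalization is needed on the right-hand side.

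Next, I would use the $\sigma$-regularisation of Section \ref{finite-part-sec}: for $\Re\sigma$ large, $x^\sigma e^{-t\Delta_k}$ is trace class, and Duhamel's formula together with cyclicity of trace (using that $(-1)^N N$ commutes with $\Delta_k$ and hence with $e^{-s\Delta_k}$) gives
\[
\delta \Tr\bigl(x^\sigma (-1)^N N e^{-t\Delta}\bigr) = -t\,\Tr\bigl(x^\sigma (-1)^N N (\delta\Delta) e^{-t\Delta}\bigr).
\]
Taking finite parts at $\sigma = 0$ and commuting $\delta$ with $\FP_{\sigma=0}$, which is legitimate thanks to the meromorphic dependence on $\sigma$ inherited from the polyhomogeneous structure of the resolvent, yields the same identity with $\Tr^R$ on both sides.

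The algebraic core is then the computation of $\delta\Delta$ and a Ray-Singer-type commutator manipulation. From $\delta * = *L$ together with $d^* = \pm *^{-1}d\,*$ one obtains $\delta d^* = [d^*, L]$ up to sign, and hence $\delta\Delta = dd^*L - dLd^* + d^*Ld - Ld^*d$. Expanding $\Tr^R\bigl((-1)^N N\cdot\delta\Delta\cdot e^{-t\Delta}\bigr)$ as four terms, I would cyclically rotate each one so that $L$ sits adjacent to $e^{-t\Delta}$ (whereby the trace becomes genuine by the first step), then use the relations $Nd = dN+d$, $Nd^* = d^*N - d^*$, $(-1)^N d = -d(-1)^N$, $(-1)^N d^* = -d^*(-1)^N$, $[L,N] = 0$, and the commutativity of $dd^*$ and $d^*d$ with $e^{-t\Delta}$. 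All contributions containing $N$ then cancel in pairs, leaving
\[
\Tr^R\bigl((-1)^N N\cdot\delta\Delta\cdot e^{-t\Delta}\bigr) = \Tr\bigl((-1)^N L\Delta e^{-t\Delta}\bigr) = -\frac{d}{dt}\Tr\bigl((-1)^N L e^{-t\Delta}\bigr),
\]
which, substituted into the Duhamel identity, yields the proposition.

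The main obstacle is the final manipulation: although none of $e^{-t\Delta_k}$, $d$, $d^*$ is trace class individually on a $\phi$-manifold, in each term of the expansion one can push $L$ next to $e^{-t\Delta}$ before performing any cyclic rotation, thereby reducing to a genuine trace. Tracking this carefully while simultaneously keeping track of the signs arising from $[N,d]$, $[N,d^*]$, the anticommutation of $(-1)^N$ with $d$ and $d^*$, and the sign convention implicit in $\delta d^*$, is the delicate part of the argument, and is where the decay assumption on $h$ is fully used.
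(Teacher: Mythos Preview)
Your outline follows the same strategy as the paper's proof, and all the essential ingredients are present. There is, however, a genuine gap in the order of operations in your second paragraph. You claim that for $\Re\sigma\gg 0$ Duhamel and cyclicity give
\[
\delta \Tr\bigl(x^\sigma (-1)^N N e^{-t\Delta}\bigr) = -t\,\Tr\bigl(x^\sigma (-1)^N N (\delta\Delta) e^{-t\Delta}\bigr),
\]
justifying the collapse of $\int_0^t \Tr\bigl(x^\sigma (-1)^N N e^{-\tau\Delta}(\delta\Delta)e^{-(t-\tau)\Delta}\bigr)\,d\tau$ by ``$(-1)^N N$ commutes with $e^{-s\Delta}$''. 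But the multiplication operator $x^\sigma$ does \emph{not} commute with $e^{-s\Delta}$, so cycling $e^{-(t-\tau)\Delta}$ (or $e^{-\tau\Delta}$) past $x^\sigma$ is not permitted, and the $\tau$-integrand is not constant in $\tau$ while $x^\sigma$ is present.

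The paper resolves this by reversing your two steps: it first keeps the Duhamel integral intact,
\[
\delta \Tr^R\bigl((-1)^N N e^{-t\Delta}\bigr) = -\,\underset{\sigma=0}{\FP}\int_0^t \Tr\bigl(x^\sigma (-1)^N N e^{-\tau\Delta}(\delta\Delta)e^{-(t-\tau)\Delta}\bigr)\,d\tau,
\]
then observes (exactly as you do in your first and last paragraphs) that each term of $\delta\Delta$ contains $L$, so $(\delta\Delta)e^{-(t-\tau)\Delta}$ is honestly trace class; hence the finite part is simply evaluation at $\sigma=0$, and the factor $x^\sigma$ disappears. Only \emph{after} this does one invoke cyclicity, now unobstructed, to collapse the $\tau$-integral to $-t\,\Tr\bigl((-1)^N N(\delta\Delta)e^{-t\Delta}\bigr)$. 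From there the Ray--Singer algebra proceeds exactly as you describe. So your argument is correct once you swap the order: take $\FP_{\sigma=0}$ first (using trace-class of $L$), then apply cyclicity.
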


\begin{proof}
First of all we make an easy observation that for a 
scalar function $f(\varepsilon, \gamma)$ with full asymptotic 
expansion as $\varepsilon \to 0$ and coefficients depending 
smoothly on $\gamma$, we can interchange the limits
$$
\delta \LIM_{\varepsilon \to 0} f(\varepsilon, \cdot) = 
\LIM_{\varepsilon \to 0} \, \delta f(\varepsilon, \cdot).
$$
Consequently, we find with
$f(\varepsilon, \gamma) := \int_{\overline{M}(\varepsilon)} \tr (e^{-t\Delta_\gamma}) d\Vol_{\phi}(\gamma)$
(cf. Definition \ref{reg-trace-def})
\begin{equation}\label{Tdelta}
\begin{split}
\delta \Tr^R \Bigl((-1)^N Ne^{-t\Delta}\Bigr) 
&= \Tr^R \Bigl((-1)^N N \delta e^{-t\Delta}\Bigr)
\\ &= \underset{\sigma = 0}{\FP} \ \Tr \Bigl((-1)^N N \delta e_\sigma^{-t\Delta}\Bigr),
\end{split}
\end{equation}
where we used the finite part regularization in § \ref{finite-part-sec} and
used the modified heat operator 
$e^{-t\Delta_k}_{\sigma}:= x^{\sigma} e^{-t\Delta_k}$, which is trace class 
for $\Re(\sigma) \gg 0$ sufficiently large.  
Using the semi-group property of the heat kernel we write
\[
\frac{e^{-t\Delta_{\gamma}}-e^{-t\Delta_{\gamma_0}}}{\gamma-\gamma_0} 
=\int_0^t \frac{\partial}{\partial \tau} \left(\frac{e^{-\tau \Delta_{\gamma}}
e^{-(t-\tau)\Delta_{\gamma_0}}}{\gamma-\gamma_0}\right)\, d\tau=
\int_0^t e^{-\tau \Delta_{\gamma}} \frac{-\Delta_{\gamma}+
\Delta_{\gamma_0}}{\gamma-\gamma_0}e^{-(t-\tau)\Delta_{\gamma_0}}\, d\tau.
\]
Taking the limit $\gamma\to \gamma_0$ we find
\[
\delta e^{-t\Delta}=-\int_0^t e^{-\tau \Delta} (\delta \Delta)e^{-(t-\tau)\Delta}\, d\tau.
\]
Plugging this into \eqref{Tdelta}, we find
\begin{equation}\label{Tdelta2}
\delta \Tr^R \Bigl((-1)^N Ne^{-t\Delta}\Bigr) =
- \underset{\sigma = 0}{\FP} \ \int_0^t \Tr \Bigl( x^{\sigma} (-1)^N N e^{-\tau \Delta} 
(\delta \Delta)e^{-(t-\tau)\Delta}\Bigr) d\tau.
\end{equation}
The following identity is well known, holds locally and as such on any 
smooth manifold, irrespective the Assumption \ref{Assumption:VarTraceClass}, 
cf. \cite[pp. 152]{RaySin:RTA}, namely
\begin{align}
\delta (\Delta) = L d^*d - d^* L d + d L d^* -d d^* L.
\label{eq:deltaNDelta}
\end{align}
Assumption \ref{Assumption:VarTraceClass} is chosen precisely such that $L$ 
is a trace-class operator. Indeed, $L$ acts pointwise as an endomorphism on the 
exterior algebra and its pointwise (trace) norm behaves under Assumption \ref{Assumption:VarTraceClass} 
near the boundary $\partial M$ as 
$$
\| \, L \, \| = O(x^{b+1+\alpha}), \ \textup{as} \ x \to 0.
$$
Furthermore, can easily check that the operators
$de^{-t\Delta},d^*e^{-t\Delta},\Delta e^{-t\Delta}$
are bounded in $L^2$ and hence $(\delta \Delta)e^{-(t-\tau)\Delta}$ 
is trace-class due to presence of the trace class operator $L$ in each term 
of $\delta (\Delta)$ in \eqref{eq:deltaNDelta}. Hence, taking finite part at
$\sigma = 0$ in \eqref{Tdelta2} amounts simply to evaluating $\sigma$ at zero and we conclude
\begin{align*}
\delta \Tr^R \Bigl((-1)^N Ne_{\sigma}^{-t\Delta}\Bigr) &=
- \int_0^t \Tr \Bigl( (-1)^N N e^{-\tau \Delta} 
(\delta \Delta)e^{-(t-\tau)\Delta}\Bigr) d\tau \\
&=- t \ \Tr \Bigl( (-1)^N N (\delta \Delta)e^{-t\Delta}\Bigr),
\end{align*}
where in the last equality we used cyclic invariance of the trace. 
From here on we can follow the classical argument by Ray and Singer
\cite[pp. 153]{RaySin:RTA} to conclude the statement using 
cyclic invariance of the trace.
\end{proof}

\begin{Cor}
\label{Cor:deltaFFormula}
Under the Assumption \ref{Assumption:VarTraceClass}
$$
\delta \log T(M,g_\phi) \equiv \left. \frac{d}{d\gamma} 
\right|_{\gamma = 0} \log T(M,g_\phi(\gamma)) = 
\left. \frac{d}{d\gamma} 
\right|_{\gamma = 0} \| \cdot \|^{-1}_{g_\phi(\gamma)}, 
$$
where $ \| \cdot \|_{g_\phi(\gamma)} $ is the norm 
on $\det H^*_{(2)} (M,E)$, induced by $g_\phi(\gamma)$ on $L^2$-harmonic forms.
In particular, the analytic torsion norm in Definition 
\ref{torsion-def} is independent of $\gamma$.
\end{Cor}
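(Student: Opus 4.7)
The plan is to combine the variational identity of Proposition \ref{delta-prop} with a Mellin transform manipulation in the spirit of Ray and Singer, adapted to the regularized setting. I would first express the variation as
$$
\delta \log T(M,g_\phi) = \frac{1}{2} \frac{d}{ds}\bigg|_{s=0} \frac{1}{\Gamma(s)} \int_0^\infty t^{s-1} \delta \Tr^R\bigl((-1)^N N e^{-t\Delta}\bigr) \, dt,
$$
and insert Proposition \ref{delta-prop} under the integral. This converts the $\delta$ of the regularized supertrace into $-t\, d/dt$ of the genuine trace $G(t) := \Tr((-1)^N L e^{-t\Delta})$, which is a true (unregularized) trace because $L$ is trace class by Assumption \ref{Assumption:VarTraceClass}.

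Next, I integrate by parts in $t$. To control boundary contributions, split $G(t) = G_\infty + \tilde G(t)$ with $G_\infty := \Tr((-1)^N L P_{\mathrm{harm}})$ and $\tilde G(t)$ decaying exponentially as $t \to \infty$; for $\Re(s)>0$ both boundary terms then vanish, yielding
$$
\delta \sum_{k=0}^m (-1)^k k\, \zeta_k(s) = \frac{s}{\Gamma(s)} \tilde M(s), \qquad \tilde M(s) := \int_0^\infty t^{s-1} \tilde G(t) \, dt.
$$
The function $\tilde M(s)$ has at worst a simple pole at $s=0$ whose residue is the short-time value $\tilde G(0)$. Combined with the expansion $1/\Gamma(s) = s + O(s^2)$, differentiating at $s=0$ extracts exactly this residue, so $\delta \log T = \tfrac{1}{2} \tilde G(0)$.

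The next step is to identify the short-time limit. A pointwise algebraic computation using $*^2=\pm I$ and the definition $L=*^{-1}\delta *$ gives $\tr\bigl((-1)^N L\bigr)(p) = 0$ at every interior point provided $\dim M \geq 2$. Together with the standard Ray-Singer supertrace cancellations in the short-time heat expansion of $\Tr((-1)^N L e^{-t\Delta})$, this forces $\lim_{t\to 0^+} G(t) = 0$, hence $\tilde G(0) = -G_\infty$ and
$$
\delta \log T = -\tfrac{1}{2}\, \Tr\bigl((-1)^N L P_{\mathrm{harm}}\bigr).
$$

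Finally, I match this against the variation of the $L^2$-norm on $\det H^*_{(2)}(M,E)$. Choosing harmonic representatives of a fixed cohomology class makes $\delta \omega$ exact, hence $L^2$-orthogonal to the harmonic subspace, so the only contribution to $\delta \norm{\omega}_{L^2}^2$ comes from $\delta(*)$ inside the $L^2$-inner product; a direct calculation yields $\delta \log \norm{\omega}_{L^2}^2 = \Tr\bigl((-1)^N L P_{\mathrm{harm}}\bigr)$. Comparing with the previous display gives $\delta \log T = \delta \log \norm{\cdot}_{L^2}^{-1}$ on the determinant line, from which the invariance $\delta \norm{\cdot}^{\mathrm{RS}}_{(M,E,g_\phi)} = 0$ is immediate since $\norm{\cdot}^{\mathrm{RS}} = T \cdot \norm{\cdot}_{L^2}$. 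I expect the main obstacle to be the rigorous justification of $\lim_{t \to 0^+} G(t) = 0$ in the non-compact setting, where one must verify that the Ray-Singer type supertrace cancellations survive after integration against the trace-class weight $L = O(x^{b+1+\alpha})$ near the fibred boundary, and that no log-type terms spoil the Laurent expansion used in the contour extraction at $s=0$.
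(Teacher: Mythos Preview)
Your overall strategy---insert Proposition \ref{delta-prop}, integrate by parts in $t$ to produce an extra factor of $s$, and then read off the value at $s=0$---is exactly the paper's route. However, two of your intermediate claims are false as stated, and the paper handles them differently.

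First, $\tilde G(t) = G(t) - G_\infty$ does \emph{not} decay exponentially as $t\to\infty$. On a $\phi$-manifold the Hodge Laplacian has continuous spectrum down to $0$, and the large-time asymptotics of $\Tr\bigl((-1)^N L e^{-t\Delta}\bigr)$ is the polyhomogeneous expansion \eqref{L-asymptotics}, with terms of order $t^{-j/2}\log^\ell t$. The decay is only polynomial, so your ``$\Re(s)>0$ makes both boundary terms vanish'' argument must be replaced by splitting the Mellin integral at $t=1$, treating $\int_0^1$ and $\int_1^\infty$ in disjoint half-planes, and meromorphically continuing each piece---exactly as in Lemma \ref{Lem:zetaDef}.

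Second, and more seriously, $\lim_{t\to 0^+} G(t)$ does not exist: $G(t)$ diverges like $t^{-m/2}$ as $t\to 0$. The pointwise identity $\tr\bigl((-1)^N L\bigr)(p)=0$ (even if correct) would only kill the \emph{leading} heat coefficient, not the full singular part; the supertrace cancellations you invoke do not apply once the weight $L$ is inserted. What is actually needed is that the \emph{constant term} $\LIM_{t\to 0}G(t)$ in the short-time expansion vanishes. The paper obtains this from the hypothesis that $m=\dim M$ is odd: the expansion $\sum_j a'_j t^{-m/2+j}$ then contains only half-odd-integer powers of $t$, so no $t^0$ term appears. You have not used the odd-dimensionality anywhere, and without it the statement is simply false in general. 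Once you replace ``$\lim$'' by ``$\LIM$'' and invoke $m$ odd, the rest of your argument (identifying $\LIM_{t\to\infty}G(t)=\Tr\bigl((-1)^N L P_{\ker_{L^2}\Delta}\bigr)$ via strong convergence of $e^{-t\Delta}$ to the harmonic projector, and matching with $\delta\log\|\cdot\|_{L^2}$) goes through and coincides with the paper's.
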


\begin{proof}
Proposition \ref{delta-prop} implies, by the classical argument as in \cite[pp. 153]{RaySin:RTA},
\begin{align*}
\delta \log T(M,g_\phi) &= \left. \frac{d}{ds} \right|_{s = 0} 
\frac{s}{2\Gamma(s)} \int_0^1 t^{s-1} \Tr \Bigl( (-1)^{N+1} 
L e^{-t\Delta_k} \Bigr) dt
\\ &+ \left. \frac{d}{ds} \right|_{s = 0} 
\frac{s}{2\Gamma(s)} \int_1^\infty t^{s-1} \Tr \Bigl( (-1)^{N+1} L 
e^{-t\Delta_k}\Bigr) dt,
\end{align*}
where as in Lemma \ref{Lem:zetaDef}, the first integral converges for $\Re(s) \gg 0$, 
the second for $\Re(s) \ll 0$, and both integrals admit a meromorphic extension to $\C$. 
Now $\Tr (L e^{-t\Delta})$ has asymptotics which are obtained along the lines of 
Proposition \ref{Lem:RegTracePoly},
\begin{equation}\label{L-asymptotics}
\begin{split}
&\Tr (L e^{-t\Delta}) \sim \sum_{j = 0}^\infty a'_j t^{-\frac{m}{2} + j}, \ \textup{as} \ t \to 0, \\
&\Tr(L e^{-t\Delta}) \sim \sum_{j = 0}^\infty \sum_{\ell = 0}^{\ell_j} b'_{j, \ell} t^{-\frac{j}{2}} 
\log^\ell(t), \ \textup{as} \ t \to \infty.
\end{split}
\end{equation}
An easy calculation shows 
\begin{align*}
&\left. \frac{d}{ds} \right|_{s = 0} 
\frac{s}{2\Gamma(s)} \int_0^1 t^{s-1} \Tr \Bigl( (-1)^{N+1} L e^{-t\Delta}\Bigr) dt 
= \LIM_{t \to 0}\Tr ((-1)^{N+1} L e^{-t\Delta}), 
\\ &\left. \frac{d}{ds} \right|_{s = 0} 
\frac{s}{2\Gamma(s)} \int_1^\infty t^{s-1} \Tr \Bigl( (-1)^{N+1} L e^{-t\Delta}\Bigr) dt
= \LIM_{t \to \infty}\Tr ((-1)^{N+1} L e^{-t\Delta}),
\end{align*}
where as before $\LIM$ denotes the constant term in the corresponding asymptotics. 
We infer from the first equation in \eqref{L-asymptotics} that $\LIM_{t \to 0}$ has no 
constant term\footnote{since $m= \dim M$ is odd}, while
$\LIM_{t \to \infty}$ does, namely $b'_{0,0}$. By \cite[Theorem VIII 5d)]{ReedSimon1},
the heat kernel converges pointwise to the kernel of the spectral projection $P_{\ker_{L^2}\Delta}$ 
onto the $L^2$-harmonic forms. $LP_{\ker_{L^2}\Delta}$
is trace class and we conclude
\[b'_{0,0} = \Tr (LP_{\ker_{L^2}\Delta}).\]
The claim now follows from the classical observation 
\[\Tr ((-1)^{N+1} LP_{\ker_{L^2}\Delta}) = \left. \frac{d}{d\gamma} 
\right|_{\gamma = 0} \| \cdot \|^{-1}_{g_\phi(\gamma)}.\]
\end{proof}

\begin{Rem}
In \cite{GuiShe} Guillarmou and Sher define analytic torsion for asymptotically 
conical manifolds, which corresponds to $\phi$-manifolds with trivial fibres. They do not 
discuss invariance of the renormalized torsion under metric deformations.
\end{Rem}

\section{A gluing formula for renormalized analytic torsion}

There are gluing formulas for the analytic torsion, which 
were established by Lesch \cite[Theorem 6.1]{Les:GFA} for 
manifolds with discrete spectrum. These were extended by the second author 
\cite[Theorem 2.9]{Ver} to non-compact manifolds with continuous
spectrum with either a spectral gap around zero or an acyclic vector bundle $E$. The gluing formula in 
\cite[Theorem 2.9]{Ver} applies to $\phi$-manifolds as well, and, after an extension, allows us to 
reproduce and generalize a result by Guillarmou and Sher \cite[Theorem 10 \& Lemma 42]{GuiShe}, 
which they attain without gluing formulas. Our result requires 
a restrictive assumption on the asymptotic end $\partial M$. 

\begin{Assump}
\label{Assumption:GaporAcyc}
We impose one of the following two assumptions.
\begin{enumerate}
\item either $H^k(\partial M,E)=0$ for all degrees $0\leq k \leq m-1=\dim(\partial M)$,
\item or $H^k(\partial M,E)=0$ for $1\leq k \leq m-2$ and 
$E$ is a trivial vector bundle over $\overline{M}$.
\end{enumerate}
\end{Assump}

Note that the second case of the assumption above covers the setting of 
Guillarmou and Sher \cite[Theorem 10 \& Lemma 42]{GuiShe}. Note also that
the second case of the assumption is not present in \cite[Theorem 2.9]{Ver}.\medskip

To state the gluing formula, we need some notation. Continuing in the notation of 
Section \ref{subsec-phi}, we write $N = (0,1] \times \partial M$
and $K= M \backslash ((0,1) \times \partial M)$. Let $\iota_1 \colon \partial M\to N$ and $\iota_2\colon \partial M\to K$ 
denote the natural inclusions of $\partial M$ as $\{1\} \times \partial M \subset N \cap K$. Define the 
corresponding complexes, where the lower index $(2)$ indicates $L^2$ integrability with respect to $g_\phi$
\begin{align*}
&\Omega^*_r(K,E):= \{\omega \in \Omega^*(K,E)\, \colon\, \iota_2^* \omega=0\}, \\
&\Omega^*_r(N,E):= \{\omega \in \Omega_{(2)}^*(N,E)\, \colon\, \iota_1^* \omega=0\}, \\
&\Omega^*_r(M,E):= \{(\omega_1,\omega_2) \in \Omega_{(2)}^*(N,E)\oplus \Omega^*(K,E)\, 
\colon\, \iota_1^* \omega_1=\iota_2^* \omega_2\}.
\end{align*}
These complexes fit into short exact sequences
\begin{align*}
&0\to \Omega^*_r(N,E)\xrightarrow{\alpha} \Omega_r^*(M,E)\xrightarrow{\beta} \Omega^*(K,E)\to 0, \\
&0\to \Omega_r^*(N,E)\oplus \Omega_r^*(K,E)\xrightarrow{\iota} \Omega^*_r(M,E)\xrightarrow{\rho}
\Omega^*(\partial M,E)\to 0,
\end{align*}
with $\iota$ being the inclusion and $\rho$ the restriction, 
$\alpha(\omega)=(\omega,0)$ and $\beta(\omega_1,\omega_2)=\omega_2$.
Repeating the argument of Vishik \cite[Proposition 1.1]{Vis:GRS}, the $L^2$ harmonic forms of 
$\Omega_r^*(M,E)$ and $\Omega^*(M,E)$ coincide. So we get long-exact sequences in 
(reduced $L^2-$) cohomology
\begin{align*}
&\dots \to H^p_{(2)}(N,\partial M,E)\xrightarrow{\alpha_*} H^p_{(2)}(M,E)\xrightarrow{\beta_*} 
H^p(K,E)\xrightarrow{\delta} H^{p+1}_{(2)}(N,\partial M,E)\to \dots, \\
&\dots \to H^p_{(2)}(N,\partial M,E)\oplus H^p(K,\partial M,E)\xrightarrow{\iota_*} H^p_{(2)}(M,E)\xrightarrow{\rho_*} H^p(\partial M,E)
\xrightarrow{\delta} \dots,
\end{align*}
where $\delta$ is the connecting homomorphism. These long exact sequences produce 
isomorphisms of the determinant line bundles (see for instance \cite[Proposition 1.12 \& Section 1.3]{Nikolaescu})
\begin{align*}
&\Phi\colon \det H^*_{(2)}(N,\partial M,E)\otimes \det H^*(K,E)\to \det H_{(2)}^*(M,E), \\
&\Psi\colon \det H^*_{(2)}(N,\partial M,E)\otimes \det H^*(K,\partial M,E)\otimes \det H^*(\partial M,E)\to \det H_{(2)}^*(M,E).
\end{align*}
With these maps, we can now write down the gluing formulas.

\begin{thm}
\label{thm:Gluing}
Let $M$ be an odd-dimensional connected $\phi$-manifold satisfying Assumptions \ref{assumption} 
and \ref{Assumption:GaporAcyc}. Let $(E,\nabla,h)$ be a flat hermitian vector bundle 
over $\overline{M}$. Then, assuming product structure near the cut $\{1\} \times \partial M$, 
the renormalized analytic torsion satisfies the following gluing formulas
\begin{align*}
&\norm{\Phi(\alpha \otimes \beta)}_{(M,E)}^{RS}
= 2^{-\frac{\chi(\partial M,E)}{2}}\norm{\alpha}_{(N,\partial M,E)}^{RS}\otimes \norm{\beta}_{(K,E)}^{RS}, \\
&\norm{\Psi(\alpha \otimes \beta\otimes \gamma)}_{(M,E)}^{RS}=\norm{\alpha}_{(N,\partial M,E)}^{RS} 
\otimes \norm{\beta}^{RS}_{(K,N,E)}\otimes \norm{\gamma}_{\det H^*(\partial M,E)},
\end{align*}
where we use notation of Definition \ref{torsion-def} for analytic torsion norms 
with relative and absolute boundary conditions at the 
regular boundary.
\end{thm}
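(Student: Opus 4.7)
The approach is to reduce the theorem to the gluing formula of \cite[Theorem 2.9]{Ver} and then derive the second formula from the first by combining with the classical Lesch--Vishik gluing along the regular hypersurface $\partial M \subset K$. Case (1) of Assumption \ref{Assumption:GaporAcyc} matches the hypothesis of \cite{Ver} directly; case (2) requires an additional argument which I describe at the end.

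First I would verify that all torsion norms in the statement are well-defined. For the compact piece $K$ with regular boundary this is classical. For $N = (0,1]\times \partial M$ with relative boundary conditions at $\{1\}\times \partial M$, the Laplacian is self-adjoint, and its renormalized heat trace admits the asymptotic description of Proposition \ref{Lem:RegTracePoly} (the regular-boundary contribution is tame and does not affect the renormalization at the $\phi$-end), so $T(N,\partial M,E;g_\phi)$ is defined by \eqref{eq:LogTDef}. The determinant-line isomorphisms $\Phi$ and $\Psi$ are the ones produced by the two Mayer--Vietoris long exact sequences written before the theorem, invoking the Vishik-type identification of reduced $L^2$-cohomology of $M$ with the kernel of the $L^2$ Hodge Laplacian recalled in the excerpt.

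Next I would apply the proof technique of \cite[Theorem 2.9]{Ver}: a BFK-type surgery formula expresses $\log T(M,E;g_\phi)$ as the sum of $\log T(N,\partial M,E;g_\phi)$ and $\log T(K,E;g_\phi)$ plus the contribution of the Dirichlet-to-Neumann operator on the product neck near $\{1\}\times \partial M$. Under case (1) of Assumption \ref{Assumption:GaporAcyc} the boundary Laplacian is acyclic in all degrees, so by the Lesch--Vishik computation the scattering contribution collapses to the universal factor $2^{-\chi(\partial M,E)/2}$, yielding the first formula. The odd-dimensionality of $M$ eliminates parity anomalies in the zeta-regularization, and the product structure near the cut makes the surgery argument run with no extra error terms. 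The second formula then follows from the first by applying the classical Lesch--Vishik gluing to $K$ cut along $\partial M$ and splicing together the two Mayer--Vietoris long exact sequences; the factors $2^{\pm\chi(\partial M,E)/2}$ cancel to give the stated form, with the $\det H^*(\partial M,E)$ contribution appearing precisely as in the acyclic-link case of \cite{Les:GFA}.

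The main obstacle is extending \cite{Ver} to case (2) of Assumption \ref{Assumption:GaporAcyc}, where only $H^k(\partial M,E)$ for $1\le k \le m-2$ vanish while $E$ is trivial and $H^0, H^{m-1}$ carry the remaining cohomology of total Euler characteristic $2\,\mathrm{rk}(E)$. Here $\Delta_{\partial M}$ lacks a full spectral gap at zero in the extreme degrees, so \cite{Ver} does not apply directly. The argument I would give exploits triviality of $E$ to treat the zero-modes explicitly: the harmonic forms on $\partial M$ in degrees $0$ and $m-1$ are spanned by constants and their Hodge duals tensored with flat sections of $E$, and their restriction to the product neck yields a finite-rank block of the Dirichlet-to-Neumann operator that can be evaluated in closed form. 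One then checks that this explicit block contributes exactly $2^{-\chi(\partial M,E)/2}$ to the BFK formula, so after isolating it the remaining positive-spectrum part satisfies a gap condition in the middle degrees and \cite{Ver} applies verbatim. Verifying this zero-mode calculation, together with its compatibility with the torsion transfer across the Mayer--Vietoris sequences, is the technical heart of the proof.
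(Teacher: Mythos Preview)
Your overall strategy---reduce to \cite[Theorem 2.9]{Ver}, observe that case (1) of Assumption \ref{Assumption:GaporAcyc} is exactly the hypothesis there, and then argue separately for case (2)---matches the paper. However, your treatment of case (2) diverges from the paper's and contains a gap.

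The method underlying \cite{Ver} (following \cite{Les:GFA} and \cite{Vis:GRS}) is not a BFK/Dirichlet-to-Neumann surgery formula but Vishik's $\theta$-deformation: one interpolates between the transmission condition at $\theta=\pi/4$ and the decoupled relative/absolute conditions at $\theta=0$, and the single analytic input needed is that the dimensions of the cohomology groups $H^k_\theta(M,E)$ of the deformed complexes $\mathscr{D}^*_\theta$ are independent of $\theta$. The paper's proof of case (2) stays entirely inside this framework: using the long exact sequence associated to $0\to \Omega^*_r(N,E)\oplus\Omega^*_r(K,E)\to \mathscr{D}^*_\theta\to \Omega^*(\partial M,E)\to 0$, the vanishing of $H^k(\partial M,E)$ for $1\le k\le m-2$ gives $\theta$-independence in the middle degrees immediately, and in the extreme degrees one uses triviality of $E$ to compute $H^0_\theta(M,E)\cong\R^{\rk(E)}$ (and similarly $H^m_\theta$) directly, after which the alternating-sum identity for the exact sequence forces $\theta$-independence of $\dim H^1_\theta$ and $\dim H^{m-1}_\theta$. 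This is a short, purely algebraic argument.

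Your proposed route---isolate the finite-rank zero-mode block of a Dirichlet-to-Neumann operator, compute its contribution explicitly, and then invoke \cite{Ver} ``verbatim'' on the positive-spectrum remainder---does not match how \cite{Ver} is actually structured, and the step ``the remaining positive-spectrum part satisfies a gap condition and \cite{Ver} applies verbatim'' is not justified: the acyclicity hypothesis in \cite{Ver} enters precisely through the $\theta$-independence of $\dim H^k_\theta$, and there is no mechanism in that paper for splitting off a zero-mode block and running the argument on the complement. Finally, the two gluing formulas do not need to be derived one from the other; both are outputs of the same Vishik deformation once $\theta$-independence is established, corresponding to the two Mayer--Vietoris sequences displayed before the theorem.
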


\begin{proof}
This is a strengthening of \cite[Theorem 2.9]{Ver}, which was proved only under the first case of Assumption 
\ref{Assumption:GaporAcyc}. Here, we provide an extension to include the second case of Assumption 
\ref{Assumption:GaporAcyc}, which we assume from now on. The original proof was done under three assumptions: 
\begin{enumerate}
\item The first, \cite[Assumption 2.5]{Ver}, is that Lemma \ref{Lem:RegTracePoly} 
holds and that the dimensions of the $L^2$-kernels of the Laplacian acting on 
$k$-forms are finite for $0\leq k\leq m$. By \cite[Theorem 1]{Cohomology} the 
$L^2$-kernels can be identified with certain intersection cohomologies, hence are finite dimensional. 
\item The second assumption, \cite[Assumption 2.6]{Ver}, again follows from the polyhomogeneous 
descriptions given in Theorem \ref{thm:ShortHeatKernelPoly} and Theorem \ref{thm:LongHeatKernelPoly} 
of the heat kernel. 
\item The third assumption, \cite[Assumption 2.8]{Ver} is almost Assumption \ref{Assumption:GaporAcyc}, 
but we do not assume that the top and bottom cohomology of $\partial M$ vanish. This assumption gets 
used to ensure that the dimensions of certain cohomology groups do not jump, \cite[Proof of Theorem 10.4]{Ver}, 
\cite[Section  5.2.3]{Les:GFA}, and we quickly explain this next. 
\end{enumerate}
Let $\theta\in (0,\pi/2)$. In the above notation, introduce 
\[\mathscr{D}_{\theta}^k:= \left\{(\omega_1,\omega_2)\in \Omega^k_{(2)}(N,E)\oplus \Omega^k(K,E)\, \colon\, \cos\theta \iota_1^*\omega_1=\sin\theta \iota_2^* \omega_2\right\}.\]
The central ingredient in the proof of \cite[Theorem 2.9]{Ver} is that the cohomology groups 
$H^*_{\theta}(M,E)$ associated to the chain complex $\mathscr{D}_{\theta}^*$ have $\theta$-independent dimensions. 
We get a short exact sequence like above,
\[
0\to \Omega^*_r(N,E)\oplus \Omega^*_r(K,E) \xrightarrow{i_{\theta}} 
\mathscr{D}_{\theta}^* \xrightarrow{r_{\theta}} \Omega^*(\partial M,E),
\]
where $i_\theta$ is the inclusion and $r_{\theta}=\sin\theta \iota_1^*\omega_1+\cos\theta \iota^*_2\omega_2$. The long exact cohomology sequence associated to this tells us in view of the second case of Assumption \ref{Assumption:GaporAcyc} (which we assume here)
\begin{align}\label{inner-degree}
H^k_{\theta}(M,E)\cong H^k_{(2)}(N,\partial M,E)\oplus H^k(K,\partial M,E)
\end{align}
for $2\leq k\leq m-2$. For $k=0,1,m-1$ we consider the ends of the long exact sequence. We have 
(we are supressing $E$ in the notation to shorten the notation)
\begin{align*}
&0 \to H^0_{(2)}(N,\partial M)\oplus H^0(K,\partial M)\xrightarrow{\iota_*} H^0_{\theta}(M)\xrightarrow{\rho_*} H^0(\partial M)
\\ &\qquad \xrightarrow{\delta} H^1_{(2)}(N,\partial M)\oplus H^1(K,\partial M)\xrightarrow{\iota_*} H^1_{\theta}(M)\xrightarrow{\rho_*} 0, \\
&0 \to H^{m-1}_{(2)}(N,\partial M)\oplus H^{m-1}(K,\partial M)\xrightarrow{\iota_*} H^{m-1}_{\theta}(M)\xrightarrow{\rho_*} H^{m-1}(\partial M)
\\ &\qquad \xrightarrow{\delta} H^m_{(2)}(N,\partial M)\oplus H^m(K,\partial M)\xrightarrow{\iota_*} H^m_{\theta}(M)\xrightarrow{\rho_*} 0.
\end{align*}
The alternating sum of dimensions in an exact sequence of finite-dimensional vector spaces sums up to zero. 
Hence the two exact sequences imply 
\begin{equation}\label{rand-degree}
\begin{split}
\dim H_{\theta}^1(M)&=\dim H_\theta^0(M)-\dim H^0_{(2)}(N,\partial M)-\dim H^0(K,\partial M)\\
&-\dim H^0(\partial M)+\dim H^1_{(2)}(N,\partial M)+\dim H^1(K,\partial M), \\
\dim H_{\theta}^{m-1}(M)&=\dim H_\theta^m(M)-\dim H^m_{(2)}(N,\partial M)-\dim H^m(K,\partial M)\\
&+\dim H^{m-1}(\partial M)+\dim H^{m-1}_{(2)}(N,\partial M)+\dim H^{m-1}(K,\partial M).
\end{split}
\end{equation}
So, as soon as we argue that $\dim H_{\theta}^0(M,E)$ and $\dim H_{\theta}^m(M,E)$ are both $\theta$-independent, 
we get in view of \eqref{inner-degree} and \eqref{rand-degree} the $\theta$-independence of 
$\dim H_{\theta}^k(M,E)$ in all degrees $k$. For a trivial vector bundle $E$ we have a spanning set of pointwise linearly independent
parallel sections and thus $H_{\theta}^0(M,E) \cong H_{\theta}^0(M)^{\mathrm{rk}(E)}$.
For a connected $M$ (which we always assume) we have $H_{\theta}^0(M) \cong \R$ 
for all $\theta$ and hence
$$
H_{\theta}^0(M,E) \cong \R^{\mathrm{rk}(E)}, \quad \dim H_{\theta}^0(M,E) = \mathrm{rk}(E),
$$
independently of $\theta$. The argument for $k=m$ is similar. Hence $\dim H_{\theta}^k(M,E)$ is independent of $\theta$
in all degrees $k$ and the argument of \cite[Theorem 2.9]{Ver} carries over 
to our case.
\end{proof}

\begin{Rem}
We have to assume that the metric has product form near the gluing region $\{1\}\times \partial M$. 
This can always be achieved by modifying the metric away from the asymptotic end, 
and Corollary \ref{Cor:deltaFFormula} guarantees that the analytic torsion is left unchanged.
One can of course perform the gluing along some other neck dividing the manifold into a 
compact and a non-compact part. The only important thing is that everything takes place 
away from the asymptotic end $\{0\}\times \partial M$.
\end{Rem}

\section{Relating analytic torsions on $\phi$- and wedge manifolds}

This section is devoted to a proof of Theorem \ref{main3}. This is an application of our gluing result in Theorem \ref{thm:Gluing}.
We will also use the gluing formula for compact manifolds with wedge singularities due to 
Lesch \cite{Les:GFA}.  This will reproduce a result of Guillarmou and Sher \cite{She}, \cite{GuiShe} 
about conic degeneration using gluing results. Let us first introduce the setting.

\subsection*{The setting:}

We consider two compact manifolds with boundary $\overline{M}$ and $\overline{\Omega}$ with a common boundary, $\partial \overline{M}=\partial M=\partial \overline{\Omega}$. 
This shared boundary $\partial M = B \times F$ is product of two compact 
manifolds $B$ and $F$.  We write $\cU_\phi = (0,1]_x \times \partial M \subset M$ and 
$\cU_\omega = (0,1]_r \times \partial M \subset \Omega$ for the respective boundary collars. 
We equip the open interiors $M \subset \overline{M}$ and $\Omega \subset \overline{\Omega}$
with the Riemannian $\phi$-metric $g_\phi$ and the Riemannian wedge metric $g_\omega$, respectively, namely 
\begin{align*}
&g_\phi \restriction \cU_\phi = \frac{dx^2}{x^4}+\frac{g_{B}}{x^2}+g_{F},\\
&g_\omega \restriction \cU_\omega = dr^2 + r^2 g_B + g_F,
\end{align*}
where $g_B$ and $g_F$ are Riemannian metrics on $B$ and $F$, respectively. Note that
under a change of variables $x= 1/r$, $g_\phi$ attains the same form as $g_\omega$, namely
\begin{equation}\label{xr}
g_\phi \restriction \Bigl( \cU_\phi = [1,\infty)_r \times \partial M \Bigr) = dr^2 + r^2 g_B + g_F.
\end{equation}
We consider compactly supported perturbations $h_\phi$ and $h_\omega$ of the metrics 
$g_\phi$ and $g_\omega$, respectively, such that 
\begin{align*}
&(g_\phi + h_\phi) = dx^2 + g_B + g_F, \ \textup{near the
hypersurface}  \ \Sigma = \{1\} \times \partial M \subset M,
\\  &(g_\omega + h_\omega) = dr^2 + g_B + g_F, \ \textup{near the
hypersurface} \ \Sigma = \{1\} \times \partial M \subset \Omega.
\end{align*} 
These two manifolds are illustrated in Figure \ref{gluing-before}.

\begin{figure}[h]
\centering
\begin{tikzpicture}
\node at (0,0) {\includegraphics[scale=0.5]{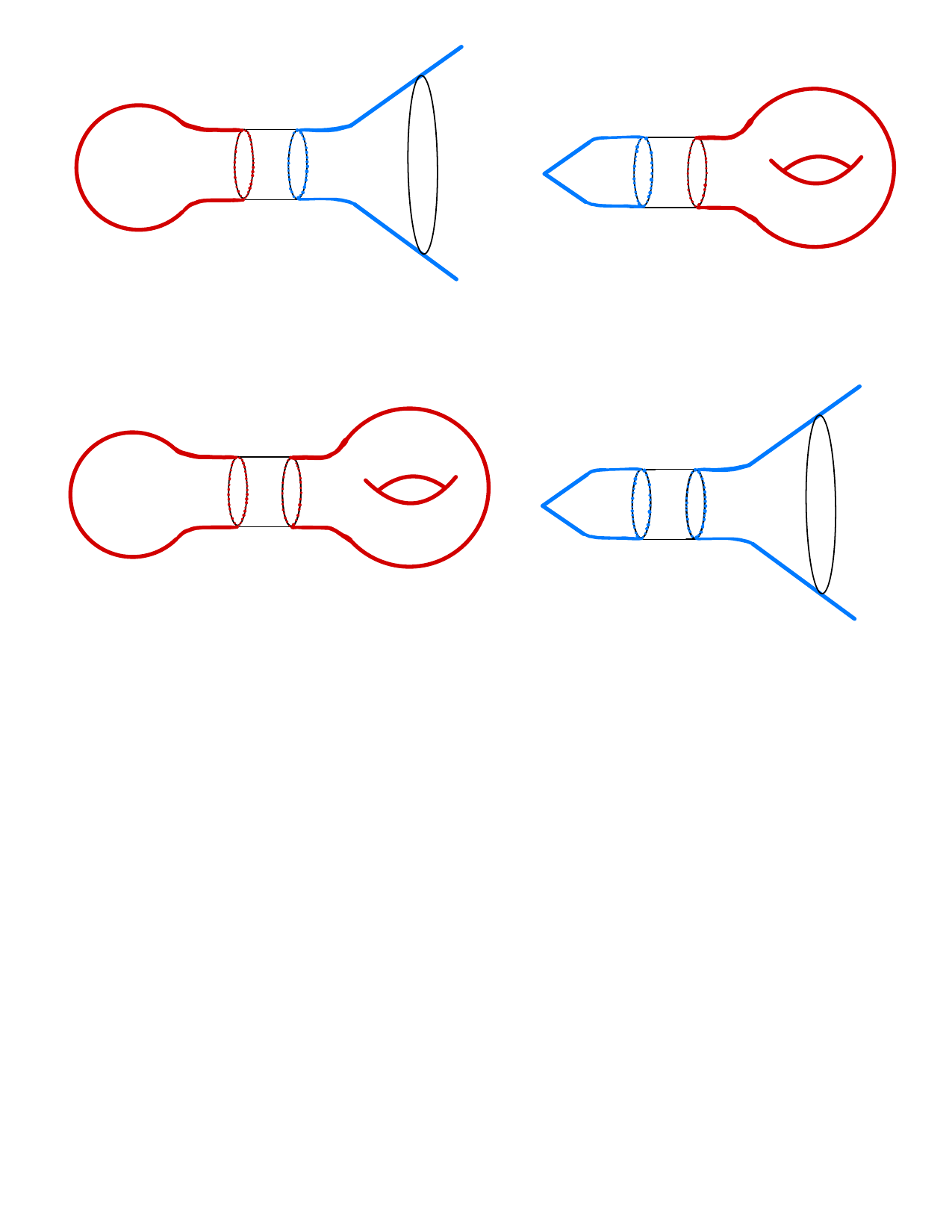}};
\node at (-4.5,1.2) {$M$};
\node at (4.6,1.3) {$\Omega$};
\node at (-2.3,1) {$\Sigma$};
\node at (2.2,0.9) {$\Sigma$};
\node at (-1.4,-1) {$\cU_\phi$};
\node at (1.5,-0.9) {$\cU_\omega$};
\end{tikzpicture}
\caption{Manifolds $(M,g_\phi + h_\phi)$ and $(\Omega, g_\omega+h_\omega)$.}
\label{gluing-before}
\end{figure}

This allows us to define a new compact Riemannian manifold 
\begin{align*}
(K,g_K) := 
\Bigl(M \backslash \cU_\phi, g_\phi + h_\phi\Bigr) &\cup_{\Sigma} 
\Bigl(\Omega \backslash \cU_\omega, g_\omega + h_\omega\Bigr), \ \textup{i.e.} \\
&g_K \restriction (M \backslash \cU_\phi) := g_\phi + h_\phi, \\
&g_K \restriction (\Omega \backslash \cU_\omega) := g_\omega + h_\omega, 
\end{align*}
where $g_K$ is a smooth Riemannian metric precisely due to the product structure assumption
on the perturbations $h_\phi$ and $h_\omega$. Finally, we may also glue 
$(\cU_\phi, g_\phi)$ and $(\cU_\omega, g_\omega)$ to a model wedge manifold
\begin{align*}
(\cU,g_{\cU}) &:= 
\Bigl(\cU_\phi, g_\phi +h_\phi\Bigr) \cup_{\Sigma} 
\Bigl(\cU_\omega, g_\omega+h_\omega\Bigr), \ \textup{i.e.} \\
&g_{\cU} \restriction (\cU_\phi, g_\phi) := g_\phi + h_\phi, \\
&g_{\cU} \restriction (\cU_\omega, g_\omega) := g_\omega + h_\omega.
\end{align*}
These new two manifolds are illustrated in Figure \ref{gluing-after} and the red 
versus blue colouring corresponds to the colouring in the previous 
Figure \ref{gluing-before}.

\begin{figure}[h]
\centering
\begin{tikzpicture}
\node at (0,0) {\includegraphics[scale=0.5]{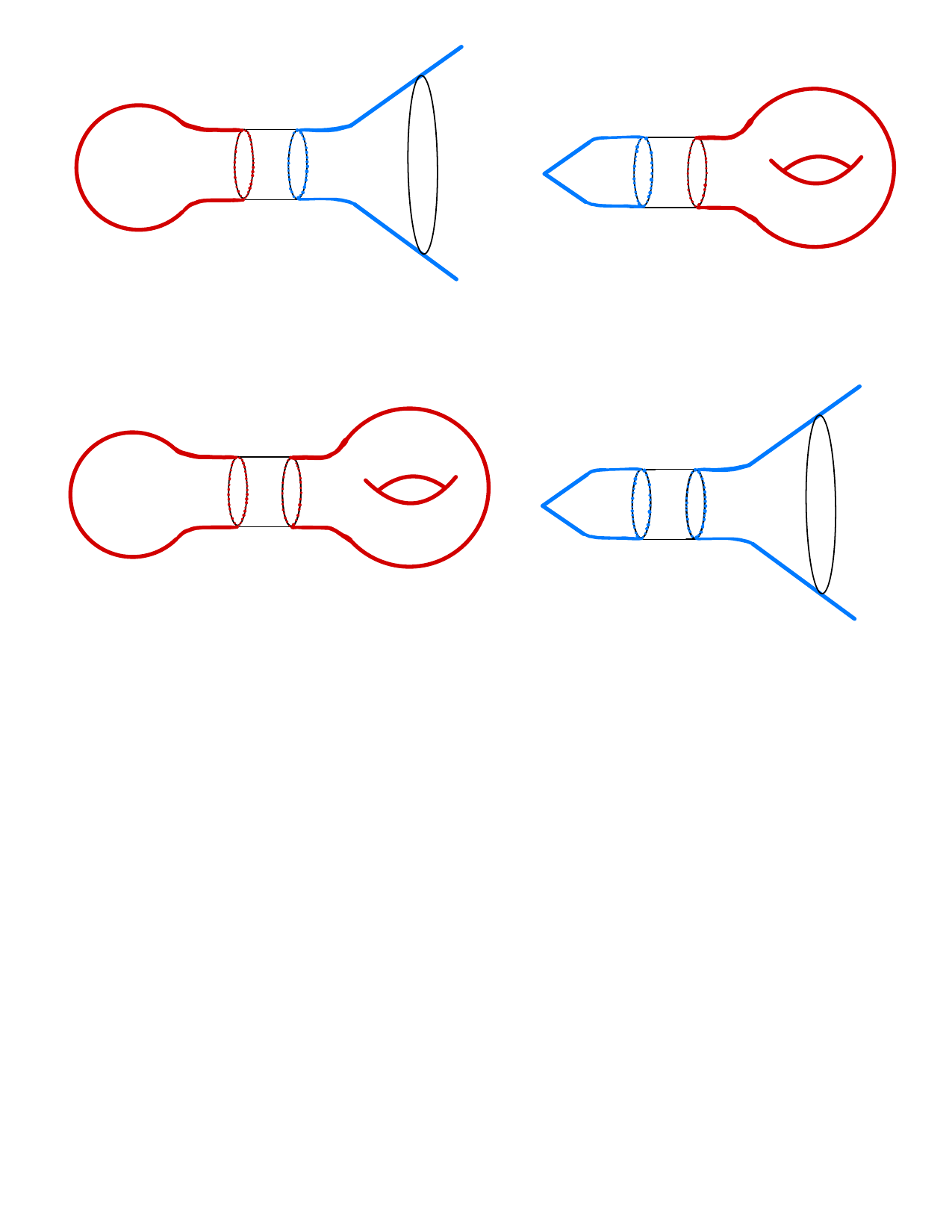}};
\node at (-4.5,1.2) {$K$};
\node at (4.8,1.3) {$\cU$};
\node at (-2.3,1) {$\Sigma$};
\node at (2.2,0.9) {$\Sigma$};
\node at (-0.7,-1.3) {$\Omega \backslash \cU_\omega$};
\node at (-3.7,-1.3) {$M \backslash \cU_\phi$};
\node at (1.5,-0.9) {$\cU_\omega$};
\node at (3.4,-1.3) {$\cU_\phi$};
\end{tikzpicture}
\caption{Manifolds $(K,g_K)$ and $(\cU, g_{\cU})$.}
\label{gluing-after}
\end{figure}

Finally, we fix flat Hermitian vector bundles over $\overline{M}$ and $\overline{\Omega}$,
which can be identified over open neighborhoods of $\Sigma$ in $M$ and $\Omega$. 
They induce flat Hermitian vector bundles over $\cU$ and $K$. And by a slight abuse of
notation we denote all these vector bundles by $E$.

\subsection*{The gluing formulas:}

We impose Assumptions \ref{assumption} and \ref{Assumption:GaporAcyc} on $(M,g_\phi)$.
Then a combination of Theorem \ref{thm:Gluing}, Lesch \cite{Les:GFA} and \cite{Vis:GRS} 
gives plethora of canonical isomorphisms of determinant line bundles 
(the $L^2$ cohomologies are defined with respect to the metrics as defined above)
\begin{align*}
&\Phi_M\colon\, \det H^*(M \backslash \cU_\phi,E)\otimes 
H^*_{(2)}(\cU_{\phi},\Sigma, E)\to \det H^*_{(2)}(M,E), \\
&\Phi_\Omega \colon\, \det H^*(\Omega \backslash \cU_\omega,\Sigma,E)\otimes 
H^*_{(2)}(\cU_{\omega},E)\to \det H^*_{(2)}(\Omega,E), \\
&\Phi_K \colon\, \det H^*(M \backslash \cU_\phi,E)\otimes 
H^*(\Omega \backslash \cU_\omega,\Sigma,E)\to \det H^*(K,E), \\
&\Phi_{\cU}\colon\, \det H^*_{(2)}(\cU_\omega,E)\otimes 
H^*_{(2)}(\cU_{\phi},\Sigma, E)\to \det H^*_{(2)}(\cU,E).
\end{align*}
These maps define isometries of the (renormalized) analytic torsion norms
\begin{equation}\label{gluing-formulae}
\begin{split}
&2^{-\frac{\chi(\Sigma,E)}{2}}
\norm{u}_{(M \backslash \cU_\phi,E, g_\phi+h_\phi)}^{RS}
\cdot \norm{v}^{RS}_{(\cU_{\phi},\Sigma, E, g_\phi+h_\phi)}
=\norm{\Phi_M(u\otimes v)}^{RS}_{(M,E,g_\phi+h_\phi)}, \\
&2^{-\frac{\chi(\Sigma,E)}{2}}
\norm{u}_{(\Omega \backslash \cU_\omega,\Sigma,E,g_\omega+h_\omega)}^{RS}
\cdot \norm{v}^{RS}_{(\cU_{\omega},E,g_\omega+h_\omega)}
=\norm{\Phi_\Omega(u\otimes v)}^{RS}_{(\Omega,E,g_\omega+h_\omega)}, \\
&2^{-\frac{\chi(\Sigma,E)}{2}}
\norm{u}_{(M \backslash \cU_\phi,E,g_\phi+h_\phi)}^{RS}
\cdot \norm{v}^{RS}_{(\Omega \backslash \cU_\omega,\Sigma,E,g_\omega+h_\omega)}
=\norm{\Phi_K(u\otimes v)}^{RS}_{(K,E,g_K)}, \\
&2^{-\frac{\chi(\Sigma,E)}{2}}
\norm{u}_{(\cU_\omega,E,g_\omega +h_\omega)}^{RS}
\cdot \norm{v}^{RS}_{(\cU_{\phi},\Sigma, E, g_\phi +h_\phi)}
=\norm{\Phi_{\cU}(u\otimes v)}^{RS}_{(\cU,E,g_{\cU})}.
\end{split}
\end{equation}

Note that the analytic torsion norm of $(\Omega, g_\omega+h_\omega)$
is defined and studied in \cite{MazVer}. The renormalized analytic torsion norm of
$(\cU,g_{\cU})$ can be defined by combining the analysis in \cite{MazVer} for the incomplete
end and the analysis behind Definition \ref{torsion-def} for the complete $\phi$-end.

\subsection*{Proof of Theorem \ref{main3}:}

The next result is an obvious rephrasing of Theorem \ref{main3}
in terms of renormalized analytic torsion norms.

\begin{thm}\label{main3-rephrazed}
We impose Assumptions \ref{assumption} and \ref{Assumption:GaporAcyc} on $(M,g_\phi)$.
Assume that $\dim F$ is even. Then there exists a canonical isomorphism of determinant lines
$$
\Phi: \det H^*_{(2)}(M,E) \otimes \det H^*_{(2)}(\Omega,E) \to \det H^*(K,E),
$$
which is an isometry of the (renormalized) analytic torsions
\begin{equation}\label{iso}
\norm{u}_{(M,E,g_\phi)}^{RS}
\cdot \norm{v}^{RS}_{(\Omega,E,g_\omega)}
=\norm{\Phi(u\otimes v)}^{RS}_{(K,E,g_K)}.
\end{equation}
Here $g_K$ can be replaced by any smooth Riemannian metric on $K$
in view of the classical Cheeger-M\"uller theorem.
\end{thm}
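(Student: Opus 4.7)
The plan is to combine the four gluing formulas in \eqref{gluing-formulae} and reduce the theorem to a vanishing statement for the renormalized torsion norm of the model wedge $(\cU, g_\cU)$. As a preliminary reduction, Corollary \ref{Cor:deltaFFormula} together with its wedge analogue from \cite{MazVer} shows that the renormalized torsion norms of $(M, g_\phi)$ and $(\Omega, g_\omega)$ coincide with those of the perturbed metrics $g_\phi + h_\phi$ and $g_\omega + h_\omega$ which are product near $\Sigma$, and the classical metric invariance of Ray-Singer applies to the compact piece $(K, g_K)$.

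Given nonzero classes $u \in \det H^*_{(2)}(M, E)$ and $v \in \det H^*_{(2)}(\Omega, E)$, I would write $u = \Phi_M(u_1 \otimes v_1)$ and $v = \Phi_\Omega(u_2 \otimes v_2)$ using the first two determinant line isomorphisms above \eqref{gluing-formulae}. Multiplying the first and second equations of \eqref{gluing-formulae} and separately the third and fourth, the left-hand sides agree (they are the same fourfold product of norms with the same $2^{-\chi(\Sigma,E)}$ prefactor), and consequently
\begin{equation*}
\norm{u}_{(M,E,g_\phi)}^{RS} \cdot \norm{v}_{(\Omega,E,g_\omega)}^{RS} = \norm{\Phi_K(u_1 \otimes u_2)}_{(K,E,g_K)}^{RS} \cdot \norm{\Phi_\cU(v_2 \otimes v_1)}_{(\cU,E,g_\cU)}^{RS}.
\end{equation*}
Setting $\Phi(u \otimes v) := \Phi_K(u_1 \otimes u_2)$ produces the canonical determinant line isomorphism sought in the theorem; independence of the decomposition is a diagram chase through the compatible long exact cohomology sequences. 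The remaining task is then the key lemma $\norm{\cdot}_{(\cU,E,g_\cU)}^{RS} = \norm{\cdot}_{L^2}$, i.e.\ $T(\cU) = 1$.

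Under the substitution $x = 1/r$ on $\cU_\phi$, the $\phi$-metric agrees with the wedge metric and $(\cU, g_\cU)$ identifies with the warped product $(C(B) \times F,\, dr^2 + r^2 g_B + g_F)$ over the full open cone $C(B) = (0,\infty)_r \times B$. I would exploit this product structure to write $\Delta^\cU = \Delta^{C(B)} \otimes 1 + 1 \otimes \Delta^F$ so that the heat kernel splits and the renormalized heat trace factors spectrally through the eigenvalues of $\Delta^F$. Since $\dim F$ is even, the Ray-Singer Poincaré duality argument on the closed factor $F$ forces the scalar contribution of $F$ to the alternating sum $\sum_k (-1)^k k\, \zeta'_k(0,\Delta^\cU)$ to vanish, while the induced $L^2$-metric on $\det H^*_{(2)}(\cU, E)$ coming from the Künneth splitting is compatible with $\Phi_\cU$; together these give $T(\cU) = 1$.

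The main obstacle will be this last step: the Künneth decomposition must be carried out at the level of the \emph{renormalized} torsion, and $\cU$ simultaneously carries a complete $\phi$-end at $r \to \infty$ and an incomplete wedge singularity at $r \to 0$, so both regularization procedures must be shown to commute with the product splitting. I would address this by working with the $\sigma$-regularized trace of Section \ref{finite-part-sec}, performing the Künneth factorization for $\Re(\sigma) \gg 0$ where the weighted operators are trace class, and then continuing meromorphically to $\sigma = 0$ using the polyhomogeneous descriptions in Theorems \ref{thm:LongHeatKernelPoly} and \ref{thm:ShortHeatKernelPoly} together with their wedge-side counterparts from \cite{MazVer}. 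The Poincaré duality on $F$ then operates at the spectral level exactly as in the original Ray-Singer computation.
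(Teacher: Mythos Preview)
Your overall strategy matches the paper's: combine the four gluing formulae \eqref{gluing-formulae}, use Corollary \ref{Cor:deltaFFormula} and its wedge analogue to pass to the unperturbed metrics, and reduce to showing $T(\cU)=1$ for the model wedge $\cU = C(B)\times F$ with the exact metric $dr^2+r^2g_B+g_F$.

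There is, however, a genuine gap in your sketch of $T(\cU)=1$. When you split $N = N_{C(B)}+N_F$ the pointwise trace $\tr\bigl((-1)^N N e^{-t\Delta_\cU}\bigr)$ breaks into \emph{two} cross-terms, not one. The term carrying $N_F$ does reduce, after the regularized integrals, to derivatives of $\Gamma(s)\sum_j(-1)^j j\,\zeta(s,\Delta_{F,j})$, and this vanishes by the Ray--Singer Poincar\'e duality argument you cite (\cite[Theorem 2.3]{RaySin:RTA}) because $\dim F$ is even. But the other term carries $N_{C(B)}$ and contributes $\chi(F,E)\cdot\sum_k(-1)^k k\,\zeta(s,\Delta_{C(B),k})$; Poincar\'e duality on $F$ does nothing here. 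The paper kills this term by a separate argument: the cone metric is exactly scale-invariant, $f_\lambda^*\Delta = \lambda^{-2}\Delta f_\lambda^*$, which forces the renormalized heat trace of $e^{-t\Delta_{C(B),k}}$ to be a polynomial in $\log t$ with no $t$-power dependence; feeding this into the Mellin transform and using $\int_0^1 t^{s-1}\log^\ell t\,dt + \int_1^\infty t^{s-1}\log^\ell t\,dt = 0$ (as meromorphic functions of $s$) gives $\zeta(s,\Delta_{C(B),k})\equiv 0$ identically. You need this scaling step; without it one term in the product decomposition survives.

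A minor simplification available to you: the paper observes directly that $H^*_{(2)}(\cU,E)=0$, since $L^2$-harmonic forms on the exact product would have to be of the form $r^\beta$ times a harmonic form on $F$, and no such expression lies in $L^2((0,\infty),r^b\,dr)$. Thus $\det H^*_{(2)}(\cU,E)\cong\C$ and the torsion norm is just $|w|\cdot T(\cU)$; there is no K\"unneth compatibility of $L^2$-metrics to check.
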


\begin{proof} Up to reordering of individual determinant lines, we define
\begin{align*}
&\Phi':= (\Phi_K \otimes \Phi_{\cU}) \circ (\Phi_M^{-1} \otimes \Phi_\Omega^{-1}): 
\\ &\det H^*_{(2)}(M,E) \otimes \det H^*_{(2)}(\Omega,E) \to \det H^*(K,E) \otimes \det H^*_{(2)}(\cU,E).
\end{align*}
Writing $\Phi'(u \otimes v) = \Phi'(u \otimes v)_K \otimes \Phi'(u \otimes v)_{\cU}$, 
we find using \eqref{gluing-formulae}
$$
 \norm{u}^{RS}_{(M,E,g_\phi+h_\phi)} \cdot \norm{v}^{RS}_{(\Omega,E,g_\omega+h_\omega)} = 
 \norm{\Phi'(u\otimes v)_K}^{RS}_{(K,E,g_K)} \cdot  
 \norm{\Phi'(u\otimes v)_{\cU}}^{RS}_{(\cU,E,g_{\cU})}. 
$$
Note that due to \eqref{xr}, the metric $g_{\cU}$ on $\cU = (0,\infty)_r \times \partial M$ is a
compactly supported perturbation of the exact edge metric 
$$
\overline{g}_{\cU} = dr^2 +r^2 g_B + g_F.
$$
In view of Corollary \ref{Cor:deltaFFormula} and the variational formula for analytic torsion of wedge manifolds 
in \cite{MazVer}, which are just classical well-known arguments in case of compactly supported metric 
perturbations $h_\phi$ and $h_\omega$, we find
\begin{equation}\label{intermediate-gluing}
\norm{u}^{RS}_{(M,E,g_\phi)} \cdot \norm{v}^{RS}_{(\Omega,E,g_\omega)} = 
 \norm{\Phi'(u\otimes v)_K}^{RS}_{(K,E,g_K)} \cdot  
 \norm{\Phi'(u\otimes v)_{\cU}}^{RS}_{(\cU,E,\overline{g}_{\cU})}.
\end{equation}
It will be convenient to work with a different isomorphism of determinant lines,
which employ the dual vector space $\det H^*_{(2)}(\cU,E)^{-1}$, namely
\begin{align*}
&\Phi'': \det H^*_{(2)}(M,E) \otimes \det H^*_{(2)}(\Omega,E) \otimes \det H^*_{(2)}(\cU,E)^{-1} \to \det H^*(K,E), \\
&\Phi'' (u \otimes v \otimes w^{-1}) := \Phi' (u \otimes v) \otimes w^{-1}.
\end{align*}
We compute, using \eqref{intermediate-gluing} in the last step
\begin{equation}\label{intermediate-gluing2}
\begin{split}
& \norm{\Phi'' (u \otimes v \otimes w^{-1})}^{RS}_{(K,E,g_K)} \equiv
 \norm{\Phi' (u \otimes v) \otimes w^{-1}}_{(K,E,g_K)} \\ & \qquad =
  \norm{\Phi'(u\otimes v)_K}^{RS}_{(K,E,g_K)} \cdot  
 \norm{\Phi'(u\otimes v)_{\cU}}^{RS}_{(\cU,E,\overline{g}_{\cU})}
 \cdot \Bigl( \norm{w}^{RS}_{(\cU,E,\overline{g}_{\cU})} \Bigr)^{-1} \\ &\qquad = 
 \norm{u}^{RS}_{(M,E,g_\phi)} \cdot \norm{v}^{RS}_{(\Omega,E,g_\omega)} 
 \cdot  \Bigl( \norm{w}^{RS}_{(\cU,E,\overline{g}_{\cU})} \Bigr)^{-1}.
 \end{split}
\end{equation}
This is almost the desired claim of \eqref{iso}, up to
the norm of $w$. Harmonic forms on $(\cU,\overline{g}_{\cU})$ are given by harmonic forms on $F$ 
times $r^\beta$ with $\beta \in \R$ determined by the spectrum of $B$. Such expressions 
are never in $L^2(\cU, \overline{g}_{\cU})$, unless they are zero. Hence 
\begin{equation}
H^*_{(2)}(\cU,E) = 0, \quad \det H^*_{(2)}(\cU,E) = \C,
\end{equation}
and thus $w \in \det H^*_{(2)}(\cU,E)$ is actually a complex number. 
By Proposition \ref{Prop:ExactConeTorsion}, the renormalized scalar analytic torsion of $(\cU,g_{\cU})$
equals $1$ and hence for all $w \in \det H^*_{(2)}(\cU,E) \cong \C$ the renormalized analytic torsion norm
$\norm{w}^{RS}_{(\cU,E,\overline{g}_{\cU})}$ is simply the absolute value of $w$. We conclude
from \eqref{intermediate-gluing2}
\begin{equation}\label{intermediate-gluing3}
\norm{\Phi'' (u \otimes v \otimes w^{-1})}^{RS}_{(K,E,g_K)} 
 = \norm{u}^{RS}_{(M,E,g_\phi)} \cdot \norm{v}^{RS}_{(\Omega,E,g_\omega)} 
 \cdot  \Bigl( | \, w \, | \Bigr)^{-1}.
\end{equation}
Setting $\Phi (u \otimes v):= \Phi'' (u \otimes v \otimes w^{-1}) \cdot w$, yields the claim.
\end{proof}

\appendix

\section{Microlocal preliminaries}\label{appendix-microlocal}

We recall some basic concepts of polyhomogenous asymptotic expansions and blow-ups. 
We refer the reader to \cite{Mel:TAP}, \cite[Chapter 1, Chapter 5]{Mel:Analysis} and \cite{Grieser} 
for much more detailed introductions.

\subsection*{Manifolds with corners}

An $n$-dimensional manifold with corners $X$ is a second countable Hausdorff space, locally modelled on $[0,\infty)^k \times \R^{n-k}$ for varying $k\in \{0,1,\dots,n\}$. We denote its open interior by $X^o$ and the set of boundary hypersurfaces by $\mathcal{M}_1(X)$. 
Each boundary hypersurface $H \in \mathcal{M}_1(X)$ is itself an
$(n-1)$-dimensional manifold with corners. We assume that the boundary hypersurfaces are embedded. A boundary defining function $\rho_H$ for a hypersurface $H$ is a function $\rho_H\colon X\to [0,\infty)$ such that $\rho_H^{-1}(0)=H$, $\rho_H$ is smooth up to the boundary\footnote{Smoothness can be taken to mean boundedness of all derivatives in the interior.}, and $d\rho_H\neq 0$ on $H$. 

\subsection*{Blow-ups}
Assume $X$ is a manifold with corners and $Y\subset X$ is a  $p-$submanifold. The blow-up $[X;Y]$ is defined as the space obtained by gluing together $X\setminus Y$ and the inward spherical normal bundle $S^+NY$ of $Y\subset X$. We will refer to $S^+NY$ as the front face of the blow-up. We equip the blow-up with the natural topology and the unique minimal differential structure with respect to which smooth functions with compact support in the open interior $(X\setminus Y)^o$ and polar coordinates around $Y$ in $X$ are smooth. The blow-up is equipped with the blow-down map 
\[\beta\colon [X;Y]\to X\]
which is the identity on $X\setminus Y$ and the bundle projection on $S^+NY$. We use the blow-down map to lift $p-$submanifolds $Z\subset X$ to $[X;Y]$ as follows. 
\begin{itemize}
\item If $Z\subseteq Y$, then $\beta^*(Z):= \beta^{-1}(Z)$.
\item If $Z\nsubseteq Y$, then $\beta^*(Z):= \overline{\beta^{-1}(Z\setminus Y)}$, where the closure is in $[X;Y]$.
\end{itemize}

\subsection*{Polyhomogeneous expansions}
Let $X$ be a manifold with corners. Recall that a b-vector field on $X$ is a 
smooth vector field which is tangential to all boundary faces of $X$. 
We denote by $\mathcal{V}_{b}(X)$ the space of b-vector fields on $X$.
The notion of polyhomogeneous functions uses the notion of b-vector fields and 
index sets that are defined as follows. A discrete subset $\mathcal{E}\subset \C\times \N_0$ 
is called an index set if
\begin{enumerate}
\item the set $\{\textup{Re}(\alpha)\, \vert \, (\alpha,k)\in \mathcal{E}\}$ accumulates only at $+\infty$, 
\item for each $\alpha\in \C$ there is $k_\alpha\in \N_0$ such that $(\alpha,k)\in \mathcal{E}\implies k\leq k_{\alpha}$,
\item if $(\alpha,k)\in \mathcal{E}$, then $(\alpha+\ell,k')\in \mathcal{E}$ for all $\ell\in \N_0$ and all $k'$ with $0\leq k'\leq k$.
\end{enumerate}
The extended union of two index sets is 
\begin{align}\label{extended-union}
\mathcal{E} \bcup \mathcal{F}:= \mathcal{E}\cup \mathcal{F}\cup \{(\alpha,k+\ell+1)\, \vert\, (\alpha,k)\in \mathcal{E} \, \&\, (\alpha,\ell)\in \mathcal{F}\}.
\end{align}
We will rarely specify index sets explicitly. Instead we give lower bounds, where the notation is
\begin{align*}
&\mathcal{E}> c \iff \forall (\alpha,k)\in \mathcal{E}: \, \textup{Re}(\alpha)>c, \\
&\mathcal{E}\geq c \iff \forall (\alpha,k)\in \mathcal{E}: \,  \textup{Re}(\alpha)\geq c \textup{ and } 
k=0 \textup{ if } \textup{Re}(\alpha)=c.
\end{align*}
An index family is an assignment $H\mapsto \mathcal{E}_H$ of an index set to each boundary hypersurface $H\in \mathcal{M}_1(X)$. A function $f$ is called polyhomogeneous on $X$ with index family $\mathcal{E}$ if it is smooth on $X^o$ and near each hypersurface $H\in \mathcal{M}_1(X)$ there is an asymptotic expansion
\begin{equation}
f\sim \sum_{(\alpha,k)\in \mathcal{E}} a_{\alpha,k} \rho_H^{\alpha} \log(\rho_H)^k, \ \textup{as} \ \rho_H\to 0.
\label{eq:Polyhomog}
\end{equation}
Here $\rho_H$ is any boundary defining function of $H$, and $a_{\alpha,k}$ is polyhomogeneous on $H$. The index family of the coefficients $a_{\alpha,k}$ is $\mathcal{E}^H$, defined as $H'\mapsto \mathcal{E}_{H'}$ for any $H'\in \mathcal{M}_1(X)$ with $H\cap H'\in \mathcal{M}_1(X)$. We stress that $\mathcal{E}\geq 0$ means the leading term in the polyhomogenous expansion \eqref{eq:Polyhomog} is a constant term. We assume that the asymptotic expansion 
\eqref{eq:Polyhomog} is preserved under iterated application of b-vector fields.

\section{An auxilliary result on some integrals}\label{appendix-integral}

\begin{Prop}\label{auxiliary-pushforward} Consider the blowup space
$$
X_b:= \bigl[[0,\infty)_{x_1} \times [0,\infty)_{x_2}; \{x_1=0\} \times \{x_2=0\}],
$$ 
with the blowdown map $\beta: X_b \to [0,\infty)_{x_1} \times [0,\infty)_{x_2}$. We denote 
\begin{enumerate}
\item the front face corresponding to the blowup by $(11)$, 
\item the lift of the face $\{x_1=0\} \times [0,\infty)_{x_2}$ by $(10)$, 
\item the lift of the face $[0,\infty)_{x_1} \times \{x_2=0\}$ by $(01)$. 
\end{enumerate}
Consider $u: [0,\infty)_{x_1} \times [0,\infty)_{x_2} \to \R$,
which lifts to a polyhomogeneous function on $X_b$ with compact support and index sets
$\mathcal{E}_{10}, \mathcal{E}_{11}, \mathcal{E}_{01}$ at the faces $(10), (11), (01)$, respectively.
Then
\begin{equation}\label{u-integral}
\int^\infty_{x_3} u(x_1,x_2) \frac{dx_2}{x_2} 
\end{equation}
lifts to a polyhomogeneous function on $X_b$ with 
$\mathcal{E}_{10}, \mathcal{E}_{11} \overline{\cup} \mathcal{E}_{10}, \mathcal{E}_{01}$ at the faces $(10), (11), (01)$, respectively.
\end{Prop}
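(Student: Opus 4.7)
The plan is to verify polyhomogeneity of $v(x_1, x_3) := \int_{x_3}^{\infty} u(x_1, x_2) \frac{dx_2}{x_2}$ on the output copy of $X_b$ with coordinates $(x_1, x_3)$ by face-by-face analysis, exploiting compact support of $u$ to dispose of any issues at $x_2 = \infty$. Smoothness on the interior of $X_b$ follows by differentiation under the integral sign, so only the three boundary faces must be inspected.

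At the face $(10)$, where $x_1 \to 0$ with $x_3 > 0$ fixed, the integration variable $x_2$ is bounded away from $0$, so substituting the polyhomogeneous expansion of $u$ at $(10)$, namely $u(x_1,x_2) \sim \sum_{(\alpha,k)\in \mathcal{E}_{10}} a_{\alpha,k}(x_2)\, x_1^\alpha \log^k x_1$, and integrating term-by-term against $dx_2/x_2$ preserves the powers and logs of $x_1$; the resulting expansion has index set $\mathcal{E}_{10}$. At the face $(01)$, where $x_1 > 0$ is fixed and $x_3 \to 0$, I would insert the $(01)$-expansion of $u$ and compute $\int_{x_3}^R x_2^{\alpha-1} \log^k(x_2)\, dx_2$ by iterated integration by parts; the $x_3 \to 0$ asymptotic is a polynomial in $\log x_3$ with leading power $x_3^\alpha$, consistent with the claimed bound $\mathcal{E}_{01}$.

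The main obstacle is the corner $(11)$. I would work in projective coordinates $s = x_3/x_1$ (with $x_1$ a defining function of $(11)$ and $s$ a defining function of $(01)$) and substitute $x_2 = x_1 \tau$, giving
\begin{equation*}
v(x_1, x_1 s) \;=\; \int_s^{\infty} u(x_1, x_1 \tau) \, \frac{d\tau}{\tau}.
\end{equation*}
The lifted integrand is polyhomogeneous in $(x_1,\tau)$: its $x_1 \to 0$ behaviour with $\tau$ in a compact set is governed by $\mathcal{E}_{11}$, while its $\tau \to \infty$ behaviour with $x_1$ bounded is governed by $\mathcal{E}_{10}$, since $x_2 = x_1 \tau$ then leaves the neighbourhood of $x_2 = 0$. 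Integrating the $\mathcal{E}_{11}$-expansion termwise in $\tau$ from $s$ to $\infty$ yields, for each $(\alpha,k) \in \mathcal{E}_{11}$, a factor $x_1^\alpha \log^k x_1$ times an antiderivative evaluated at the two endpoints; the $\tau = \infty$ tail is controlled by matching against the $\mathcal{E}_{10}$-expansion of $u$, so exponents from $\mathcal{E}_{10}$ also enter the final $x_1$-expansion. When an exponent in $\mathcal{E}_{11}$ coincides with one in $\mathcal{E}_{10}$, the primitive boosts the $\log$-degree by one, which is precisely the content of the extended union $\mathcal{E}_{11}\,\overline{\cup}\,\mathcal{E}_{10}$ defined in \eqref{extended-union}.

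A cleaner alternative would be to realise $v$ as a pushforward of $u \cdot \mathbf{1}_{\{x_2 \geq x_3\}} \cdot dx_2/x_2$ along a $b$-fibration from a suitable blowup of $[0,\infty)^3$ onto the output copy of $X_b$, and to invoke Melrose's pushforward theorem. Constructing the correct total space and verifying the $b$-fibration and $b$-density conditions would produce the stated index sets mechanically from the abstract index calculus, with the extended union at $(11)$ emerging automatically from the face-mapping combinatorics, while $(10)$ and $(01)$ are left untouched because the projection is transversal there. In either route, the key technical step is controlling the large-$\tau$ behaviour at the corner and tracking the logarithmic cross-term generated when exponents from $\mathcal{E}_{11}$ and $\mathcal{E}_{10}$ match.
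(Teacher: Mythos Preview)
Your proposal is correct, and your ``cleaner alternative'' is exactly what the paper does: it sets up the $b$-triple space $X^3_b$ obtained by blowing up $[0,\infty)_{x_1}\times[0,\infty)_{x_2}\times[0,\infty)_{x_3}$ at the origin and then at the three coordinate axes, notes that the three projections to $X^2_b(L)$, $X^2_b(R)$, $X^2_b(C)$ are $b$-fibrations, takes $w=\mathbf{1}_{\{x_2\geq x_3\}}$ on $X^2_b(R)$ (so $\mathcal{E}'_{10}=\varnothing$, $\mathcal{E}'_{11}$ and $\mathcal{E}'_{01}$ smooth), and reads off the claimed index sets from Melrose's pushforward theorem. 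The extended union at $(11)$ drops out of the general composition formula $\mathcal{F}_{11}=(\mathcal{E}_{10}+\mathcal{E}'_{01})\,\overline{\cup}\,(\mathcal{E}_{11}+\mathcal{E}'_{11})$ automatically.

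Your first route, the face-by-face computation, is a valid and more elementary alternative --- the paper itself alludes to it (``we can either derive asymptotic expansions of the integrals `by hand'\,''). Your analysis at $(10)$ and $(01)$ is fine. At the front face $(11)$, however, the sketch is a bit breezy: you cannot literally integrate the $\mathcal{E}_{11}$-expansion of $u(x_1,x_1\tau)$ termwise over $\tau\in[s,\infty)$, because the coefficients in that expansion themselves blow up as $\tau\to\infty$ (this is where the $(10)$-face intrudes). A rigorous by-hand argument would split the $\tau$-integral at, say, $\tau=1/x_1$ (equivalently $x_2=1$), expand each piece with respect to the relevant face, and match at the cut; the matching is precisely what generates the extra logarithm when an exponent in $\mathcal{E}_{11}$ coincides with one in $\mathcal{E}_{10}$. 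That bookkeeping is exactly what the pushforward theorem packages up, which is why the paper chooses that route.
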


\begin{proof} We employ the same formalism that is used in the 
composition of $b$-operators\footnote{We thank Daniel Grieser for pointing 
out this trick to us.}. We define 
\begin{align*}
&X^2(L):=[0,\infty)_{x_1} \times [0,\infty)_{x_2},\\
&X^2(R):=[0,\infty)_{x_2} \times [0,\infty)_{x_3},\\
& X^2(C):=[0,\infty)_{x_1} \times [0,\infty)_{x_3},
\end{align*}
and their blowups
\begin{align*}
&X^2_b(L):= [X^2(L); \{x_1=0\}\times \{x_2=0\}],
\\ &X^2_b(R):= [X^2(R); \{x_2=0\}\times \{x_3=0\}],
\\ &X^2_b(C):= [X^2(C); \{x_1=0\} \times \{x_3=0\}].
\end{align*}
We denote all the corresponding blowdown 
maps by $\beta$ by a small abuse of notation. We construct the so-called triple
space $X^3_b$, which is a blowup of $X^3 := [0,\infty)_{x_1} \times [0,\infty)_{x_2} \times [0,\infty)_{x_3}$,
obtained by blowing up the highest codimension corner $\{x_1=x_2=x_3=0\}$ and then the
lifts of the axes $\{x_1=x_2=0\}$, $\{x_1=x_3=0\}$ and $\{x_2=x_3=0\}$. The blowup is chosen such that
the natural projections of $X^3$ to $X^2(L), X^2(R)$ and $X^2(C)$ lift to $b$-fibrations, an 
important class of maps introduced in \cite{Mel:push}. 
\begin{align*}
\pi_L: X^3_b \to X^2_b(L), \quad 
\pi_R: X^3_b \to X^2_b(R), \quad 
\pi_C: X^3_b \to X^2_b(C).
\end{align*}

\begin{figure}[h]
\includegraphics[scale=0.6]{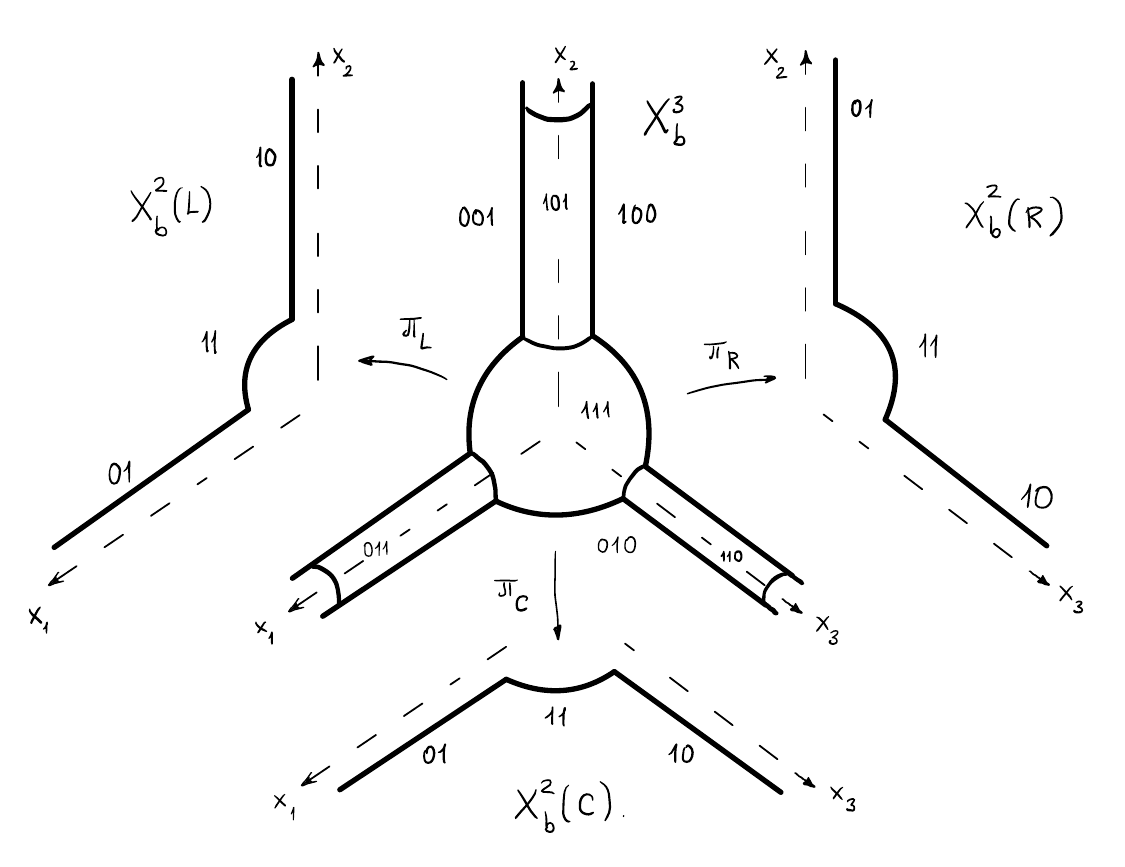}
\caption{$b$-triple space}
\label{triple-figure}
\end{figure}

These spaces and maps are illustrated in Figure \ref{triple-figure}, and a consequence
of the celebrated Melrose's pushforward theorem in \cite{Mel:push} states that if
\begin{enumerate}
\item $u$ lifts to a polyhomogeneous function on $X^2_b(L)$ with compact support and index sets
$\mathcal{E}_{10}, \mathcal{E}_{11}, \mathcal{E}_{01}$ at the faces $(10), (11), (01)$, respectively; 
\item $w$ lifts to a polyhomogeneous function on $X^2_b(R)$ with compact support and index sets
$\mathcal{E}'_{10}, \mathcal{E}'_{11}, \mathcal{E}'_{01}$ at the faces $(10), (11), (01)$, respectively, 
\end{enumerate}
then the pushfoward
\begin{align*}
&(\pi_C)_* \left( \beta^*u \cdot \beta^*w \cdot \beta^* \Bigl( \frac{dx_1}{x_1}  \frac{dx_2}{x_2}\frac{dx_3}{x_3} 
\Bigr)\right) \\ &= \beta^* \left( \int_0^\infty u(x_1,x_2) w(x_2,x_3) \frac{dx_2}{x_2} 
\right) \frac{dx_1}{x_1}  \frac{dx_3}{x_3} =: \beta^* f \cdot \frac{dx_1}{x_1} \frac{dx_3}{x_3}
\end{align*}
is a polyhomogeneous density on $X^2_b(C)$, where the index sets of $\beta^* f$ are given 
in terms of \emph{extended unions} $\overline{\cup}$ defined in \eqref{extended-union}
\begin{equation}\label{pushforward-general}
\begin{split}
&\mathcal{F}_{10} = (\mathcal{E}_{11} + \mathcal{E}'_{10}) \overline{\cup} \mathcal{E}_{10}, \\
&\mathcal{F}_{11} = (\mathcal{E}_{10} + \mathcal{E}'_{01}) \overline{\cup} (\mathcal{E}_{11} + \mathcal{E}'_{11}), \\
&\mathcal{F}_{01} = (\mathcal{E}_{01} + \mathcal{E}'_{11}) \overline{\cup} \mathcal{E}'_{01}.
\end{split}
\end{equation}
The function $f$ is given by \eqref{u-integral}, if we set $w \equiv 1$ for $x_3 \leq x_2$ and $0$ otherwise.
The lift $\beta^*w$ is polyhomogeneous (the discontinuity at $x_2=x_3$ is irrelevant for the argument)
with index sets $\mathcal{E}'_{10} = \varnothing, \mathcal{E}'_{11} = \mathbb{N}_0 \times \mathbb{N}_0, 
\mathcal{E}'_{01}\geq \mathbb{N}_0 \times \mathbb{N}_0$. 
Here $\mathcal{E}'_{10} = \varnothing$ means that $\beta^*w$ is vanishing to infinite order at $(10)$, in 
fact it is even identically zero there. Thus \eqref{pushforward-general} implies the statement for these particular
index sets.
\end{proof}

\section{Renormalized analytic torsion of a wedge}\label{appendix-wedge}
Let $(B,g_B)$, $(F,g_F)$ be two compact Riemannian manifolds. The \textit{Riemannian cone over $B$} is $\mathscr{C}(B)\coloneqq (0,\infty)\times B$ equipped with the warped product metric $g_{C(B)}\coloneqq dr^2+r^2 g_B$. The \textit{Model wedge} is  $\cU = \mathscr{C}(B) \times F$, with the product metric $g_{\cU} = g_{\mathscr{C}(B)} + g_F$. We should point out that
on wedges, $F$ is usually reserved for the cone link and $B$ for the fibration base (in this appendix the fibration is trivial). Their roles are reversed here to match  the $\phi$-setting as in \eqref{reversed} and the rest of the paper.

\begin{Prop}
\label{Prop:ExactConeTorsion}
Assume that $\dim F$ is even. Then the renormalized scalar analytic torsion of 
the model wedge $(\cU,g_{\cU})$
equals $1$ for any choice of $(B,g_B)$ and $(F,g_F)$.
\end{Prop}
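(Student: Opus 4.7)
The plan is to exploit the product structure $(\cU, g_{\cU}) = (\mathscr{C}(B), g_{\mathscr{C}(B)}) \times (F, g_F)$ together with the homothety symmetry of the cone factor. Since $\cU$ is a Riemannian product, the Hodge Laplacian decomposes by bidegree as $\Delta^k_\cU = \bigoplus_{p+q=k} (\Delta^p_{\mathscr{C}(B)} \otimes I + I \otimes \Delta^q_F)$, so the heat kernel factorises and the renormalised heat trace becomes
\[
\Tr^R e^{-t\Delta^k_\cU} \,=\, \sum_{p+q=k} a_p(t)\, b_q(t),
\]
with $a_p(t) := \Tr^R e^{-t\Delta^p_{\mathscr{C}(B)}}$ regularised at the $\phi$-end $r \to \infty$ and $b_q(t) := \Tr e^{-t\Delta^q_F}$ an honest trace on the compact factor.

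The decisive claim is that each $a_p$ vanishes identically. The diffeomorphism $\phi_\lambda(r,b) := (\lambda r, b)$ pulls back the cone metric to $\lambda^2$ times itself, hence realises an isometry $(\mathscr{C}(B), \lambda^2 g_{\mathscr{C}(B)}) \to (\mathscr{C}(B), g_{\mathscr{C}(B)})$ for every $\lambda > 0$. Combined with the scaling identity $\Delta^p_{\lambda^2 g} = \lambda^{-2}\Delta^p_g$ (under which the operator $e^{-t\Delta^p_{\lambda^2 g}}$ literally coincides with $e^{-(t/\lambda^2)\Delta^p_g}$) and with Lemma \ref{Lem:Cutoff} equating the boundary defining functions $x$ and $x/\lambda$, one obtains
\[
a_p(t) \,=\, a_p(t/\lambda^2) \quad \text{for every } t, \lambda > 0,
\]
so $a_p$ is constant in $t$. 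Matching this constancy against the two-sided polyhomogeneous expansion from Proposition \ref{Lem:RegTracePoly} collapses the expansion to its $t \to \infty$ limit, which should coincide with $\dim H^p_{(2)}(\mathscr{C}(B))$. As recalled in the proof of Theorem \ref{main3-rephrazed}, the infinite cone admits no non-trivial $L^2$-harmonic forms, because the decay rates required near $r = 0$ and near $r = \infty$ are incompatible; thus $a_p \equiv 0$.

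Consequently $\Tr^R e^{-t\Delta^k_\cU} \equiv 0$ for every $k$, the zeta functions $\zeta_{\cU, k}$ vanish identically, and Definition \ref{torsion-def} immediately gives $\log T(\cU, g_\cU) = 0$, i.e., $T(\cU, g_\cU) = 1$. The main obstacle I expect is the rigorous identification of $\lim_{t \to \infty} a_p(t)$ with the $L^2$-kernel dimension in this non-compact setting with continuous spectrum, which amounts to extending the large-time heat kernel analysis of Theorem \ref{thm:LongHeatKernelPoly} to the pure cone. The scaling-induced constancy in $t$ already rules out the logarithmic terms permitted by the general expansion, so the task reduces to a McKean--Singer-type convergence argument isolating the constant term at the face $\textup{zf}$.
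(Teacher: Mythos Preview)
Your scaling argument contains a genuine gap. The claim that $a_p(t) = a_p(t/\lambda^2)$, and hence that $a_p$ is constant, is not correct: the regularised integral defining $\Tr^R$ does \emph{not} transform homogeneously under the dilation $r \mapsto \lambda r$. The change-of-variables rule for regularised integrals (see \cite[Lemma~2.1.4]{LeschBook}, quoted in the paper as~\eqref{eq:ChangeofVar}) produces additional $\log(\lambda)$ terms coming from the asymptotic expansions of the integrand at the two ends. Applied with $\lambda = \sqrt{t}$ this gives
\[
a_p(t) \;=\; \dashint_0^\infty H'_p(1,r)\, r^b\, dr \;+\; \sum_{\ell=1}^{n_p} c_{p,\ell}\,\log(t)^\ell,
\]
which is exactly~\eqref{eq:regTraceCone} in the paper and is generally not constant. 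Your appeal to Lemma~\ref{Lem:Cutoff} does not rescue this: that lemma only asserts independence of the choice of boundary defining function \emph{extending the fixed collar coordinate $x$}, whereas $x/\lambda$ does not extend $x$. In fact, if your argument went through it would make the hypothesis $\dim F$ even superfluous, and the paper explicitly notes (in the Remark following the proof) that showing $\Tr^R\bigl((-1)^{N_{\mathscr{C}(B)}}e^{-t\Delta_{\mathscr{C}(B)}}\bigr)$ is $t$-independent is an open obstruction in general.

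The paper's route is different: it accepts the $\log(t)$ terms in $a_p(t)$ and instead shows that each zeta function $\zeta(s,\Delta_{\mathscr{C}(B),k})$ vanishes identically, using the elementary cancellation
\[
\int_0^1 t^{s-1}\log(t)^\ell\,dt \;+\; \int_1^\infty t^{s-1}\log(t)^\ell\,dt \;=\; 0
\]
(as meromorphic functions of $s$). This handles the cone factor. The product case then splits the pointwise trace as in~\eqref{eq:traceSum}; one cross term vanishes by the cone computation, and the other requires $\dim F$ even so that $\sum_j (-1)^j j\,\zeta(s,\Delta_{F,j}) \equiv 0$ by the classical Ray--Singer symmetry for closed even-dimensional manifolds.
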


\begin{proof}
We will as usual suppress the vector bundle $E\to \cU$ in the notation, and 
just write $\Omega^*(\cU)$ when we mean $\Omega^*(\cU,E)$.
Consider $\Delta$, the relative or absolute self-adjoint extension 
of the Hodge Laplacian on $(\cU,g_{\cU})$. See
\cite{Hilbert} for the discussion of relative and absolute boundary conditions,
see also \cite[(2.5)]{MV}. \medskip

\noindent\textbf{Case 1 - $F=\{pt\}$:} \medskip

\noindent
We will actually prove a bit more, namely that the associated zeta-functions 
$\zeta(s,\Delta_{\mathscr{C}(B)})$ vanish identically in each degree.
Let $\lambda>0$ and introduce the rescalings 
\begin{align*}
&f_{\lambda}\colon \mathscr{C}(B) \to \mathscr{C}(B),\quad 
f_{\lambda}(r,y)=(\lambda r, y), \\
&F_{\lambda}\colon (0,\infty)\times \mathscr{C}(B) \to (0,\infty)\times \mathscr{C}(B),\quad 
F_{\lambda}(t,r,y)=(\lambda^2 t, \lambda r, y).
\end{align*}
$f_{\lambda}$ is a smooth map, so $f_{\lambda}^* d= d f^*_{\lambda}$. If $\tilde{g}=\lambda^2 g$, then $\tilde{*}_{\vert \Omega^p}=\lambda^{m-2p}*_{\vert \Omega^p}$. So $f^*d^*=\lambda^{-2} d^* f^*$. So 
\[
f^*_{\lambda} \Delta=\lambda^{-2} \Delta f^*_\lambda.
\]
From here we conclude that the rescalings preserve the relative and absolute domains 
of $\Delta_{\mathscr{C}(B)}$. Now, one easily checks that $\omega$ solves the 
heat equation with initial data $\omega_0$ if and only if $F^*_{\lambda}\omega$ 
solves the heat equation with initial data $f_{\lambda}^* \omega_0$. Hence,
with $\dim B = b$,
\begin{align*}
\omega(\lambda^2 t, \lambda r, y)
&= \int_0^\infty \int_B e^{-t\Delta}(t, r, y, \tilde{r}, \tilde{y}) \ \omega_0(\lambda \tilde{r}, \tilde{y})\, \tilde{r}^b d\tilde{r} \, \dVol_{g_B}(\tilde{y}) \\
&= \int_0^\infty \int_B \lambda^{-1-b} e^{-t\Delta}(t, r, y, \tilde{r}/\lambda, \tilde{y}) \ \omega_0(\tilde{r}, \tilde{y})\, \tilde{r}^b d\tilde{r} \,\dVol_{g_B}(\tilde{y}).
\end{align*}

On the other hand, $\omega(\lambda^2 t, \lambda r, y)$ can be obtained as
\begin{align*}
\omega(\lambda^2 t, \lambda r, y)
= \int_0^\infty \int_B e^{-t\Delta}(\lambda^2 t, \lambda r, y, \tilde{r}, \tilde{y}) \ \omega_0(\tilde{r}, \tilde{y})\, \tilde{r}^b d\tilde{r} \, \dVol_{g_B}(\tilde{y}).
\end{align*}
Comparing these two expressions, we obtain
\begin{align}\label{rescaling}
e^{-t\Delta}(\lambda^2 t, \lambda r, y, \lambda \tilde{r}, \tilde{y}) = \lambda^{-1-b} e^{-t\Delta}(t, r, y, \tilde{r}, \tilde{y}).
\end{align}
Note that here $(1+b)$ is the total dimension of $\mathscr{C}(B)$.
Similar results were discussed in \cite[Section 2]{Che:SGS}, for instance.
We now consider the regularized heat trace in a single degree.

\begin{align*}
\Tr^R e^{-t\Delta_k} = \dashint_0^\infty \int_B \tr (e^{-t\Delta_k})\,   r^bdr \dVol_{g_B}
=: \dashint_0^\infty r^b H'_k(t,r) dr,
\end{align*}
where we recall the notation for a regularized integral
$$
\dashint_0^\infty := \LIM_{\varepsilon \to 0} \LIM_{R\to \infty} \int_\varepsilon^R.
$$
Using \eqref{rescaling}, we obtain 
\begin{align*}
\Tr^R e^{-t\Delta_k} = \dashint_0^\infty H_k'(t,r) dr = 
t^{-\frac{b+1}{2}} \dashint_0^\infty H_k'\Bigl(1,\frac{r}{\sqrt{t}}\Bigr) dr.
\end{align*}
There is a variable change rule for regularized integrals, see e.g.
\cite[Lemma 2.1.4]{LeschBook}. Namely, if $f:(0,\infty) \to (0,\infty)$ admits polyhomogeneous 
expansions as $x\to 0$ and $x\to \infty$, so that its regularized integral exists, then
for any $\lambda>0$
\begin{equation}
\dashint_0^\infty f(\lambda x)\, dx=\frac{1}{\lambda} \dashint_0^\infty f(x)\, dx 
+\frac{1}{\lambda} \sum_{\ell=1}^{n_f} c_\ell \log(\lambda)^\ell
\label{eq:ChangeofVar}
\end{equation}
for some coefficients $c_k$ and $n_f \in \N$, which can be explicitly given 
in terms of the coefficients in the asymptotic expansions of $f$. Consequently
\begin{align}
\Tr^R e^{-t\Delta_k} = t^{-\frac{b+1}{2}} \dashint_0^\infty H_k'\Bigl(1,\frac{r}{\sqrt{t}}\Bigr) r^b dr
= \dashint_0^\infty H_k'\Bigl(1,r \Bigr)r^b dr + \sum_{\ell=1}^{n_k} c_{k,\ell} \log(t)^\ell.
\label{eq:regTraceCone}
\end{align}
By definition \eqref{zeta-formula} we find
\begin{align*}
\Gamma(s)\zeta(s, \Delta_k) &=  \dashint_0^\infty H_k'\Bigl(1,r \Bigr)r^b dr 
\cdot \Bigl( \int_0^1 t^{s-1} dt + \int_1^{\infty} t^{s-1} dt \Bigr) \\ &+
\sum_{\ell=1}^{n_k} c_{k,\ell} \cdot \Bigl( \int_0^1 t^{s-1} \log(t)^\ell dt + \int_1^{\infty} t^{s-1} \log(t)^\ell dt \Bigr),
\end{align*}
where we of course mean analytic continuations of the four integrals.
For any $(\alpha,k)\in \C\times \N_0$, and $f(x)=x^\alpha \log(x)^k$ \cite[Equation 1.12]{LeschBook} asserts that (and one can easily check this)
\[\int_0^1 f(x)x^{s-1}\, dx=(-1)^k \frac{k!}{(\alpha+s)^{k+1}},\]
\[\int_1^{\infty} f(x)x^{s-1}\, dx=(-1)^{k+1}\frac{k!}{(\alpha+s)^{k+1}}.\]
So we get the somewhat strange looking formula.
\begin{equation}
\int_0^1 f(x)x^{s-1}\, dx+\int_1^{\infty} f(x)x^{s-1}\, dx=0.
\label{eq:RegIntZero}
\end{equation}
This proves $\zeta(s, \Delta_k) \equiv 0$ for each $k$ and the claim 
follows for the special case $F=\{pt\}$. \medskip

\noindent\textbf{Case 2 - $F$ arbitrary:} \medskip

\noindent
We start by deriving a partial product rule for the analytic torsion following \cite[Theorem 2.5]{RaySin:RTA} and \cite[Proposition 2.3]{Les:GFA}. Since the metric on $\mathscr{U}$ is of product type, $g_{\cU}=g_{\mathscr{C}(B)}+g_F$, the Laplacian (acting on forms) splits,
\[\Delta_{\cU}=\Delta_{\mathscr{C}(B)}+\Delta_F.\]
Splitting $\Omega^*(\cU)=\Omega^*(\mathscr{C}(B))\otimes \Omega^*(F)$, we get a corresponding split of the heat operators (and their integral kernels) 
\[e^{-t\Delta_{\cU}}=e^{-t\Delta_{\mathscr{C}(B)}}\otimes e^{-t\Delta_F}.\]
Finally, we also split the number operator $N$ as $N=N_{\mathscr{C}(B)}+N_F$, where $N_{\mathscr{C}(B)}\coloneqq N_{\mathscr{C}(B)}\otimes \mathds{1}$ and $N_F\coloneqq \mathds{1}\otimes N_F$. One readily checks that if $V,W$ are vector spaces and $A\in \mathrm{End}(V), B\in \mathrm{End}(W)$, then $\tr_{V\otimes W}(A\otimes B)=\tr_V(A)\cdot \tr_W(B)$. So the pointwise trace satisfies
\begin{align}
&\tr\left((-1)^N N e^{-t\Delta_{\cU}}\right)=\tr\left((-1)^{N_{\mathscr{C}(B)}}(-1)^{N_F} (N_{\mathscr{C}(B)}+N_F) e^{-t\Delta_{{\mathscr{C}(B)}}}\otimes e^{-t\Delta_F}\right) \notag\\
&=\tr\left((-1)^{N_{\mathscr{C}(B)}} N_{{\mathscr{C}(B)}} e^{-t\Delta_{{\mathscr{C}(B)}}}\right)\cdot\tr\left((-1)^{N_F} e^{-t\Delta_F}\right) \notag \\
&+\tr\left((-1)^{N_{\mathscr{C}(B)}}  e^{-t\Delta_{{\mathscr{C}(B)}}}\right)\cdot\tr\left((-1)^{N_F} N_F e^{-t\Delta_F}\right). 
\label{eq:traceSum}
\end{align}
Here the traces on the first line are over $\Omega^*(\cU)$, whereas the traces on line 2 (and 3) are over $\Omega^*( \mathscr{C}(B))$ and $\Omega^*(F)$ respectively.
Since $F$ is compact, we can integrate over $F$ on both sides of \eqref{eq:traceSum}. 
By the McKean-Singer formula \cite[Section 6, eq. 1]{McKeanSing} we have
\[\int_F \tr\left( (-1)^{N_F} e^{-t\Delta_F}\right) \, \dVol_{g_F}=\chi(F,E)\]
for any $t>0$, where $E$ really means $E_{\vert F}$. In particular, it does not depend on $t$. So after taking the regularised heat trace of $\mathscr{C}(B)$ and the regularized time integral, the second line of \eqref{eq:traceSum} will become
\[
\Gamma(s) \sum_{k=0}^{b+1} (-1)^k \cdot k \cdot \zeta (s,\Delta_{\mathscr{C}(B),k}) \cdot \chi(F,E).
\]
This vanishes by case 1 since each $\zeta (s,\Delta_{\mathscr{C}(B),k})=0$. \medskip

We therefore turn to the second term in \eqref{eq:traceSum}. We perform the regularized spatial integral over $\mathscr{C}(B)$ and the integral over $F$. Using \eqref{eq:regTraceCone} we find
\begin{align*}
& \Tr^R\left((-1)^{N_{\mathscr{C}(B)}}  e^{-t\Delta_{{\mathscr{C}(B)}}}\right)\cdot\Tr\left((-1)^{N_F} N_F e^{-t\Delta_F}\right)\\
&=\sum_{k=0}^{b+1} (-1)^k  \left( \dashint_0^{\infty} H'_k(1,r)\, r^b dr+\sum_{\ell=1}^{n_k}c_{k\ell} \log(t)^{\ell}\right)\Tr\left((-1)^{N_F} N_F e^{-t\Delta_F}\right).
\end{align*}
We abbreviate a bit. Set $c_{k\ell}=0$ for $k>n_k$, let $n\coloneqq \max_k n_k$, $C_0\coloneqq \sum\limits_{k=0}^{b+1} (-1)^k   \dashint_0^{\infty} H'_k(1,r)\, r^b dr$, and $C_{\ell}\coloneqq \sum\limits_{k=0}^{b+1} (-1)^k c_{k\ell}$. Then 
\begin{align*}
& \Tr^R\left((-1)^{N_{\mathscr{C}(B)}}  e^{-t\Delta_{\mathscr{C}(B)}}\right)\cdot\Tr\left((-1)^{N_F} N_F e^{-t\Delta_F}\right)\\
&=\sum_{\ell=0}^{n} C_{\ell} \log(t)^{\ell} \, \Tr\left((-1)^{N_F} N_F e^{-t\Delta_F}\right),
\end{align*}

The trace over the heat kernel $e^{-t\Delta_F}$ is really over all eigenvalues, but we can change it to a sum over only the positive eigenvalues. The reason is that the difference is given by
\[\sum_{\ell=0}^{n} C_{\ell} \log(t)^{\ell}\sum_{j=0}^{\dim(F)} (-1)^j \, j \, b_j(F,E),\]
where $b_j(F,E)\coloneqq \dim(\mathrm{Ker}(\Delta_{F,j}))$.
Multiplying this by $t^{s-1}$ and performing the integrals in time yields zero by \eqref{eq:RegIntZero}.  As such, we are left with studying
\[\sum_{\ell=0}^{n} C_{\ell}\log(t)^{\ell} \, \Tr'\left((-1)^{N_F} N_F e^{-t\Delta_F}\right),\]
where $Tr'$ means the trace but omitting zero eigenvalues.
We have to compute
\[\int_0^1 t^{s-1}  \log(t)^{\ell} \sum_{j=0}^{\dim(F)} \sum_{\substack{\lambda_j \in \spec(\Delta_{F,j})\\ \lambda_j>0}} (-1)^j j e^{-t\lambda_{j}}\, dt\] 
and
\[\int_1^\infty t^{s-1}  \log(t)^{\ell} \sum_{j=0}^{\dim(F)} \sum_{\substack{\lambda_j \in \spec(\Delta_{F,j})\\ \lambda_j>0}} (-1)^j j e^{-t\lambda_{j}}\, dt,\]
where the sums over the eigenvalues are with multiplicity.
Both integrals converge for $\text{Re}(s)>1$, so we can combine them into one integral. One readily checks
\[\int_0^\infty t^{s-1}e^{-\lambda t} \log(t)^{\ell}\, dt=\frac{d^{\ell}}{ds^{\ell}}(\lambda^{-s}\Gamma(s)).\]
Hence our integral reads
\begin{align*}
&\int_0^\infty t^{s-1} \sum_{\ell=0}^n C_{\ell} \log(t)^{\ell} \sum_{j=0}^{\dim(F)} \sum_{\substack{\lambda_j \in \spec(\Delta_{F,j})\\ \lambda_j>0}} (-1)^j j e^{-t\lambda_{j}}\, dt\\
&=\sum_{\ell=0}^n C_{\ell} \frac{d^{\ell}}{ds^{\ell}} \left(\Gamma(s)\sum_{j=0}^{\dim(F)}\sum_{\substack{\lambda_j \in \spec(\Delta_{F,j})\\ \lambda_j>0}} (-1)^j j \lambda_{j}^{-s}\right) \\
&=\sum_{\ell=0}^n C_{\ell} \frac{d^\ell}{ds^{\ell}}\left(\Gamma(s)\sum_{j=0}^{\dim(F)} (-1)^j j \zeta(s,\Delta_{F,j})\right).
\end{align*} 
But the sum $\sum\limits_{j=0}^{\dim(F)} (-1)^j j \zeta(s,\Delta_{F,j})$ vanishes by \cite[Theorem 2.3]{RaySin:RTA} since $\dim(F)$ is even.
Hence the torsion of $\mathscr{C}(B)\times F$ is trivial.

\end{proof}
\begin{Rem}
One could possibly drop the assumption that the dimension of $F$ is even. The obstruction in the above argument is the term 
\[\Tr^R\left((-1)^{N_{\mathscr{C}(B)}} e^{- t\Delta_{\mathscr{C}(B)}}\right)\eqqcolon \chi^R(\mathscr{C}(B),t).\]
If one can show that is time-independent like in the compact case, the product formula would read (suppressing the metric from the notation)
\[\log(T(\mathscr{C}(B)\times F,E))=\log(T(\mathscr{C}(B),E))\chi(F,E)+\chi^R \log(T(F,E))=\chi^R(\mathscr{C}(B)) \log(T(F,E)).\]
We do not have an argument that $\chi$ is time-independent in general. When $B=\S^b/\Gamma$ for some finite group $\Gamma\subset O(b+1)$ acting freely and $g_B$ is the round metric, then $\chi^R(\mathscr{C}(B),t)=0$ for all $t$. To see this, assume $B=\S^b$. Then $\mathscr{C}(B)\cong \R^{b+1}\setminus\{0\}$ with the Euclidean metric. The heat kernel acting on $k$-forms is $H_k=H_0 \mathds{1}_{\Omega^k}$, and $H_0(t,x,y)=(4\pi t)^{-\frac{b+1}{2}}\exp\left(-\frac{\vert x-y\vert }{4t}\right)$. So $\tr(H_k)=c_k t^{-\frac{b+1}{2}}$ for all $r$ and the regularized heat trace vanishes by \eqref{eq:RegIntZero}. When $B=\S^b/\Gamma$, the heat kernel is the same since it is rotationally invariant and therefore descends to the quotient. 
\end{Rem}

\providecommand{\href}[2]{#2}
 \bigskip

\end{document}